\DeclareMathAlphabet{\mathpzc}{OT1}{pzc}{m}{it}
\newtheorem{theorem}{Theorem}[section]
\newtheorem{lemma}[theorem]{Lemma}
\newtheorem{proposition}[theorem]{Proposition}
\theoremstyle{definition}
\theoremstyle{remark}
\newtheorem{remark}[theorem]{Remark}
\numberwithin{equation}{section}
\def\Xint#1{\mathchoice
 {\XXint\displaystyle\textstyle{#1}}%
 {\XXint\textstyle\scriptstyle{#1}}%
 {\XXint\scriptstyle\scriptscriptstyle{#1}}%
 {\XXint\scriptscriptstyle\scriptscriptstyle{#1}}%
 \!\int}
\def\XXint#1#2#3{{\setbox0=\hbox{$#1{#2#3}{\int}$}
 \vcenter{\hbox{$#2#3$}}\kern-.5\wd0}}
\def\dashint{\Xint-}
\newcommand{\Cscr}{\mathscr C}
\newcommand{\Po}{\mathbf{P}_{g_0}}
\newcommand{\PP}{\mathbf{P}}
\newcommand*{\doverline}[1]{\overline{\overline{#1}}}
\DeclareMathOperator{\Hess}{Hess}
\DeclareMathOperator{\R}{scal}
\DeclareMathOperator{\Ric}{Ric}
\DeclareMathOperator{\vol}{vol}
\DeclareMathOperator{\supp}{supp}
\DeclareMathOperator{\dvg}{d\mu_{{\it g}_0}}
\DeclareMathOperator{\dv}{d\mu}
\DeclareMathOperator{\dz}{d\mathit{z}}
\newcommand{\definedas}{\mathrel{\raise.095ex\hbox{\rm :}\mkern-5.2mu=}}
\let\vol\Vol
\let\epsilon\varepsilon
\begin{document}

\title[Bubbling of the prescribed $Q$-curvature equation]{Bubbling of the prescribed $Q$-curvature equation on 4-manifolds in the null case}

\author[Q.A. Ng\^{o}]{Qu\^{o}\hspace{-0.5ex}\llap{\raise 1ex\hbox{\'{}}}\hspace{0.5ex}c Anh Ng\^{o}}
\address[Q.A. Ng\^{o}]{Department of Mathematics\\ College of Science \\ Vi\^{e}t Nam National University\\ Ha N\^{o}i \\ Vi\^{e}t Nam}
\email{\href{mailto: Q. A. Ngo <nqanh@vnu.edu.vn>}{nqanh@vnu.edu.vn}}
\email{\href{mailto: Q. A. Ngo <bookworm\_vn@yahoo.com>}{bookworm\_vn@yahoo.com}}

\author[H. Zhang]{Hong Zhang}
\address[H. Zhang]{School of Mathematics, University of Science and Technology of China, No.96 Jinzhai Road, Hefei, Anhui, China, 230026.}
\email{\href{mailto: H. Zhang <matzhang@ustc.edu.cn>}{matzhang@ustc.edu.cn}}
\email{\href{mailto: H. Zhang <mathongzhang@gmail.com>}{mathongzhang@gmail.com}}
\thanks{}

\subjclass[2010]{Primary 53C44; Secondary 35J60}

\keywords{$Q$-curvature flow, prescribed $Q$-curvature, 4-manifolds, blow-up behaviour}

\date{\bf \today \ at \currenttime}


\setpagewiselinenumbers
\setlength\linenumbersep{110pt}

\begin{abstract}
Analog to the classical result of Kazdan--Warner for the existence of solutions to the prescribed Gaussian curvature equation on compact 2-manifolds without boundary, it is widely known that if $(M,g_0)$ is a closed 4-manifold with zero $Q$-curvature and if $f$ is any non-constant, smooth, sign-changing function with $\int_M f \dvg <0$, then there exists at least one solution $u$ to the prescribed $Q$-curvature equation 
\[
\Po u = f e^{4u},
\] 
where $\Po$ is the Paneitz operator which is positive with kernel consisting of constant functions. In this paper, we fix a non-constant smooth function $f_0$ with
\[
\max_{x\in M}f_0(x)=0, \quad \int_M f_0 \dvg <0
\]
and consider a family of prescribed $Q$-curvature equations
\[
\Po u=(f_0+\lambda)e^{4u},
\]
where $\lambda>0$ is a suitably small constant. A solution to the equation above can be obtained from a minimizer $u_\lambda$ of certain energy functional associated to the equation. Firstly, we prove that the minimizer $u_\lambda$ exhibits bubbling phenomenon in a certain limit regime as $\lambda \searrow 0$. Then, we show that the analogous phenomenon occurs in the context of $Q$-curvature flow. 
\end{abstract}

\maketitle

\section{Introduction}

The problem of describing the set of curvatures that a given manifold can possess is of importance in Riemannian geometry over the last 50 years starting from a seminal paper in 1960, or even before, due to Yamabe \cite{Yamabe} for the existence of conformal metrics of constant scalar curvature on closed manifolds of dimension $n \geqslant 3$. Without limiting to the case of constant scalar curvature, this problem is known as the prescribed scalar curvature problem and has been a main research topic in conformal geometry in recent decades. An analogue problem for manifolds of dimension $2$, known as the prescribed Gaussian curvature problem, can be formulated in a similar way.

\subsection{The Kazdan--Warner result for the scalar curvature equation}

Let $(M, g_0)$ be a compact surface without boundary. Given a smooth function $f$ on $M$, the prescribed Gaussian curvature problem asks if there exists a conformal metric $g$ such that the Gaussian curvature of $g$ is equal to $f$. By writing $g=e^{2u}g_0$, the Gaussian curvature of the metric $g$, denoted by $K_g$, satisfies the transformation law
$$K_g=e^{-2u}(-\Delta_{g_0}u+K_{g_0}).$$
This enables us to reduce the prescribed Gaussian curvature problem to the problem of solving the semilinear PDE 
\begin{equation}\label{eqPGaussianP}
-\Delta_{g_0} u + K_{g_0} = f e^{2u}
\end{equation}
 Since Eq.\eqref{eqPGaussianP} is conformally covariant, we obtain that if $v$ solves
\[
-\Delta_{g_1} v + K_{g_1} = f e^{2 v}
\]
for some $g_1 = e^{2w}g_0$, then $u = v + w$ solves \eqref{eqPGaussianP}. This together with the uniformization theorem implies that we can freely choose the background metric $g_0$ in such a way that $K_{g_0}$ is a constant whose sign depends on the Euler characteristic of $M$. In the case that $M$ has genus one, namely, $M$ is the torus, Eq. \eqref{eqPGaussianP} becomes
\begin{equation}\label{eqKW}
-\Delta_{g_0} u = fe^{2u}
\end{equation}
on $M$.
In \cite{KW}, Kazdan and Warner proved the following result:

\begin{theorem}[see Kazdan--Warner \cite{KW}]\label{KW}
There is a solution $u$ to \eqref{eqKW} if, and only if, either $f \equiv 0$, or if the function $f$ changes sign and satisfies
\begin{equation}\label{eqKWTotalIntegralIsNegative}
\int_M f \dvg < 0.
\end{equation}
\end{theorem}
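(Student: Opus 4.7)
The proof splits into a necessary direction and a sufficient direction.

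For necessity, integrating Eq.\eqref{eqKW} over $M$ and using that the Laplacian has zero mean gives $\int_M fe^{2u}\dvg=0$, so either $f\equiv0$ or $f$ changes sign. In the non-trivial case, multiplying by $e^{-2u}$ and integrating by parts yields
\[
\int_M f\dvg = -\int_M e^{-2u}\Delta_{g_0}u\dvg = -2\int_M e^{-2u}|\nabla u|_{g_0}^2\dvg \le 0,
\]
with equality only when $u$ is constant, in which case the equation forces $f\equiv 0$. Hence if $f\not\equiv 0$ then it must change sign and $\int_M f\dvg<0$ strictly.

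For sufficiency, the case $f\equiv0$ is trivial. Assume $f$ changes sign with $\int_M f\dvg<0$. I would use constrained minimisation on
\[
X = \Big\{u\in H^1(M) : \int_M u\dvg = 0,\ \int_M fe^{2u}\dvg = 0\Big\},
\]
minimising $J(u)=\int_M|\nabla u|_{g_0}^2\dvg$ over $X$. The set $X$ is non-empty: pick $\phi\ge 0$ concentrated on $\{f>0\}$; the map $t\mapsto \int_M fe^{2t\phi}\dvg$ is continuous, equals $\int_M f\dvg<0$ at $t=0$, and tends to $+\infty$ as $t\to+\infty$, so some $t_\star$ produces $\int_M fe^{2t_\star\phi}\dvg = 0$, and subtracting the mean of $t_\star\phi$ yields a point of $X$.

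To obtain a minimiser I would use the Moser--Trudinger--Onofri inequality on the closed surface $M$: there exist $\alpha,C>0$ with
\[
\log\int_M e^{2u}\dvg \le \alpha\int_M|\nabla u|_{g_0}^2\dvg + C \quad \text{whenever } \int_M u\dvg=0.
\]
For a minimising sequence $\{u_n\}\subset X$, this together with the Poincar\'e inequality bounds $\{u_n\}$ in $H^1(M)$; after extraction, $u_n\rightharpoonup u$ weakly in $H^1$ and strongly in every $L^p$, so $u_n\to u$ almost everywhere. The sharp form of Moser--Trudinger applied to $u_n-u$ gives uniform bounds on $\int_M e^{2qu_n}\dvg$ for some $q>1$, whence Vitali's theorem upgrades the a.e.\ convergence to $e^{2u_n}\to e^{2u}$ in $L^1$. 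Both constraints defining $X$ then pass to the limit, and weak lower semicontinuity of $J$ makes $u$ a minimiser.

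A minimiser $u\in X$ satisfies $-\Delta_{g_0} u = \mu fe^{2u} + \eta$ for Lagrange multipliers $\mu,\eta\in\mathbb{R}$, and integration over $M$ gives $\eta=0$. To convert this into Eq.\eqref{eqKW} I need $\mu>0$; testing against $e^{-2u}$ yields
\[
\mu\int_M f\dvg = -2\int_M e^{-2u}|\nabla u|_{g_0}^2\dvg,
\]
where the right-hand side is strictly negative (otherwise $u\equiv 0$, contradicting $\int_M fe^{2u}\dvg=\int_M f\dvg<0$), so $\mu>0$ follows from $\int_M f\dvg<0$. The rescaling $\widetilde{u}:=u+\tfrac{1}{2}\log\mu$ then solves $-\Delta_{g_0}\widetilde{u} = fe^{2\widetilde{u}}$. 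The main obstacle is the compactness step: the exponential nonlinearity sits at the critical Moser--Trudinger scale, so passing the nonlinear constraint $\int_M fe^{2u_n}\dvg=0$ to a weak $H^1$ limit requires the sharp form of that inequality, not merely a soft Sobolev embedding.
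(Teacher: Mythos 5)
The paper does not prove Theorem~\ref{KW} --- it is cited from Kazdan--Warner \cite{KW} --- but your argument (necessity by integrating the equation and by pairing against $e^{-2u}$; sufficiency by minimizing the Dirichlet energy over the constraint set $C_f'$ with Moser--Trudinger providing the compactness, then verifying the Lagrange multiplier is positive) is the standard proof and coincides with the approach the paper itself alludes to when it explains that solutions come from minimizing the Liouville energy in $C_f$. One small caveat: your closing remark overstates what is needed --- the \emph{sharp} Moser--Trudinger constant is irrelevant here; any exponential-class inequality of Trudinger type already yields equi-integrability of $e^{2u_n}$ for bounded zero-mean $H^1$ sequences and hence $L^1$-convergence of $e^{2u_n}$, so the compactness step is routine rather than borderline.
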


A solution $u$ to \eqref{eqKW} can be obtained by minimizing the Liouville energy 
$$E(u)=\frac12\int_M |\nabla u|^2 \dvg$$
in the class 
\[
C_f = \Big\{ u \in H^1(M, g_0) : \int_M f e^{2u} \dvg = 0 \Big\}.
\]
We note that the constraint $\int_M f e^{2u} \dvg = 0$ in the class $C_f$ is quite natural in view of the Gauss--Bonnet theorem. Since the energy $E$ and the constraint in $C_f$ is left unchanged up to a constant addition, in order to show existence of a minimizer for $E$ in the class $C_f$, one often restricts attention to those functions with vanishing mean. To be precise, we look for minimizer of $E$ within the set
\[
 C_f^\prime=\Big\{ u \in H^1(M, g_0) : \int_M f e^{2u} \dvg = 0, \, \int_M u \dvg = 0 \Big\}.
\] 
However, normalizing the volume will work equally well, that is, we can also look for minimizer of $E$ within the set
\[
 C_f^*=\Big\{ u \in H^1(M, g_0) : \int_M f e^{2u} \dvg = 0, \, \int_M e^{2u} \dvg = \vol (M, g_0) \Big\}.
\] 
In \cite{Ga2015}, Galimberti showed ``bubbling'' of the Kazdan--Warner metrics in a certain limit regime. To describe his result, we let $f_0$ be a non-constant and smooth function with $\max_M f_0=0$. Let $\lambda >0$ be small such that $f_\lambda = f_0 + \lambda$ changes sign and satisfies \eqref{eqKWTotalIntegralIsNegative}. Therefore, by Theorem \ref{KW} there exists a solution $\widehat u_\lambda$ to \eqref{eqKW}, which can be obtained from a minizer $u_\lambda$ of $E$ in the set $C_{f_\lambda}^*$ with $f$ replaced by $f_\lambda$. In fact, one can easily see that $\widehat u_\lambda$ and $u_\lambda$ differ by a positive constant $c_\lambda$. With a delicate argument, he is able to control the total curvature of the conformal metrics $\widehat g_\lambda = e^{2\widehat u_\lambda} g_0$ for suibtable $\lambda \searrow 0$ and hence to show that after rescaling the metrics suitably near local maximum points of $f$, one or more ``bubbles'' may be extracted from $\widehat g_\lambda$; see \cite[Theorem 1.1]{Ga2015}.

Recently, Struwe \cite{Str} improves the result in \cite{Ga2015} by obtaining a more precise characterization of the bubbling. He shows that ``slow blow-up'' does not occur; see \cite[Theorem 1.2]{Str}. This is achieved with the help of a new Liouville-type result; see \cite[Theorem 1.3]{Str}. It is remarkable that the method developed in \cite{Str} is flexible enough to apply also in the presence of perturbation leading to a similar ``bubbling'' phenomenon for a family of prescribed curvature flows for $f_\lambda$ with suitably chosen initial data in $C_{f_\lambda}$; see \cite[Theorem 1.5]{Str}.

In the last paragraph of Subsection 1.5 in \cite{Str}, Struwe comments on future investigation of ``bubbling'' metrics of prescribing $Q$-curvature equation in arbitrary even dimensions $n \geqslant 4$. Inspired by his interesting work and comments, we aim to study the bubbling behavior of the prescribed $Q$-curvature equation in the null case. In fact, we have borrowed many ideas from \cite{Str} in the proof of the main theorems in the paper. 

\subsection{A Kazdan--Warner type result for the $Q$-curvature equation}

Let $(M,g_0)$ be a closed 4-dimensional Riemannian manifold endowed with a smooth background metric $g_0$. An analogue of the conformal Laplacian in dimension $2$ is the Paneitz operator $ \Po $ discovered by \cite{Pan}. To be more precise, it is defined in terms of the Ricci tensor $\Ric_{g_0}$ and the scalar curvature $\R_{g_0}$ as 
\[
 \Po =\Delta_{g_0}^2-\mbox{div}_{g_0}\bigg((\frac23 \R_{g_0}g_0-2 \Ric_{g_0})d\bigg).
\]
Associated to the Paneitz operator $ \Po $, Branson \cite{Bra} found the $Q$-curvature which enjoys many similar properties as the Gaussian curvatue in dimension 2. It is also given, in terms of the Ricci tensor $\Ric_{g_0}$ and the scalar curvature $R_{g_0}$, by
$$Q_{g_0}=-\frac16\Big(\Delta_{g_0}\R_{g_0}-R^2_{g_0}+3| \Ric_{g_0}|^2\Big).$$
An important topic about the $Q$-curvature is the prescribed $Q$-curvature problem which is formulated as follows. Given a smooth function $f$ on $M$, one may ask if there exists a conformal metric $g=e^{2u}g_0$ with $Q$-curvature $Q_g=f$. To solve the geometric problem is equivalent to finding the solution to the fourth order semilinear PDE.
\begin{equation}\label{generalpQe}
 \Po u+Q_{g_0}=fe^{4u}.
\end{equation}
 There are many research works on the equation \eqref{generalpQe}, see, for instance, \cite{BFR, Bre, CY, DM, LLL, MS, WX} and references therein. 

In this paper, we consider the prescribed $Q$-curvature equation on 4-manifolds in the null case, that is, $\int_MQ_{g_0} \dvg =0$. Due to the resolution of the constant $Q$-curvature problem, we may assume, w.l.o.g., that the background metric $g_0$ has the null $Q$-curvature. Then the equation \eqref{generalpQe} becomes
\begin{equation}
 \label{pQe}
 \Po u=fe^{4u}.
\end{equation}
If $f\not\equiv0$, then it is necessary that $f$ changes sign for the existence of a solution to \eqref{pQe}, since $\int_Mfe^{4u} \dvg =0$. However, unlike the two-dimensional case, $\int_M f \dvg <0$ is not necessary anymore. The following result shows that $\int_Mf \dvg <0$ is still sufficient.

\begin{theorem}[see Ge-Xu \cite{GX}]\label{GeXu}
 Let $(M, g)$ be a compact, oriented four-dimensional Riemannian manifold. Assume that the Paneitz operator $ \Po 
$ is positive with kernel consisting of constant functions. If 
\[
\sup_Mf>0~~\mbox{ and }~~\int_{M}f \dvg <0,
\]
then there exists a smooth solution to \eqref{pQe}.
\end{theorem}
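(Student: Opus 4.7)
My plan is to establish existence by the direct method in the calculus of variations. The natural energy is
\[
E(u) \definedas \frac{1}{2}\int_M u\, \Po u \dvg,
\]
which I would minimize over the constraint set
\[
X \definedas \Big\{ u \in H^2(M,g_0) : \int_M u \dvg = 0,\ \int_M f e^{4u} \dvg = 0 \Big\}.
\]
A minimizer $u_*$ will satisfy, by the Lagrange multiplier theorem, an identity of the form $\Po u_* = \mu f e^{4u_*} + \nu$; a constant shift of $u_*$ will then convert this into \eqref{pQe}, provided $\mu > 0$.

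First I would verify that $X \neq \emptyset$. Fixing $x_0 \in M$ with $f(x_0) > 0$ and a smooth cutoff $\varphi$ supported near $x_0$ with $\varphi(x_0) = 1$, consider the path $u_t = t(\varphi - \overline{\varphi})$ with $\overline{\varphi}$ the $g_0$-mean. At $t = 0$ the integral $\int_M f e^{4u_t} \dvg$ equals $\int_M f \dvg < 0$, while for large $t$ the exponential weight concentrates near $x_0$ (where $f > 0$) and forces the integral to $+\infty$; continuity yields some $t_*$ with $u_{t_*} \in X$. Positivity of $\Po$ on the mean-zero subspace then gives constants $c_0, C_0 > 0$ with $c_0 \|u\|_{H^2}^2 \leq 2 E(u) \leq C_0 \|u\|_{H^2}^2$ for all $u \in X$, so minimizing sequences are $H^2$-bounded. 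Invoking the Adams--Fontana inequality for $\Po$ in dimension four, the $H^2$-bound translates into a uniform bound on $e^{4u_n}$ in every $L^p$; after extracting a subsequence with $u_n \rightharpoonup u_*$ weakly in $H^2$ and $u_n \to u_*$ a.e., Vitali's theorem yields $e^{4u_n} \to e^{4u_*}$ strongly in $L^1$. This passes the constraint $\int_M f e^{4u_n} \dvg = 0$ to the limit, so $u_* \in X$, and lower semi-continuity of $u \mapsto \langle u, \Po u \rangle$ gives $E(u_*) = \inf_X E$. Since $\int_M f \dvg \neq 0$, the constant function $0$ is not in $X$, hence $u_* \not\equiv 0$.

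The Euler--Lagrange equation on $X$ produces constants $\mu, \nu$ with $\Po u_* = \mu f e^{4u_*} + \nu$. Integrating over $M$ and using $\int_M \Po u_* \dvg = 0$ together with $\int_M f e^{4u_*} \dvg = 0$ forces $\nu \vol(M,g_0) = 0$, hence $\nu = 0$. Testing against $u_*$ further gives $\int_M u_* \Po u_* \dvg = \mu \int_M u_* f e^{4u_*} \dvg$, and since the left-hand side is strictly positive, $\mu \neq 0$. The delicate step---which I expect to be the main obstacle---is ruling out $\mu < 0$. I would attack this by a careful perturbation: assuming $\mu < 0$, construct a smooth admissible path $u_s \in X$ through $u_*$ for which the second-variation formula combined with the presence of $f e^{4u_*}$ produces a direction of strict decrease for $E$, contradicting minimality; an alternative is a continuation scheme based on subcritical approximations $\Po u_\epsilon + \epsilon u_\epsilon = \mu_\epsilon f e^{4u_\epsilon}$, where coercivity forces $\mu_\epsilon > 0$ and the sign is tracked through the limit $\epsilon \searrow 0$. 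Once $\mu > 0$ is established, the translated function $\widetilde u \definedas u_* + \tfrac14 \log \mu$ satisfies $\Po \widetilde u = f e^{4\widetilde u}$, and smoothness follows from standard fourth-order elliptic bootstrap applied to the $L^p$-bounded right-hand side.
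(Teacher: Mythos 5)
Your overall plan follows exactly the route the paper attributes to Ge--Xu: minimize the quadratic Paneitz energy over the constraint set $\{u\in H^2(M,g_0):\int_M fe^{4u}\dvg=0,\ \int_M u\,\dvg=0\}$, obtain a minimizer via Adams' inequality and Vitali's theorem, and read off the Euler--Lagrange equation with multipliers $\mu$ and $\nu$. Your handling of the non-emptiness of $X$, the coercivity, the passage to the limit in the constraint, the vanishing of $\nu$, the non-vanishing of $\mu$, and the elliptic bootstrap is all correct.

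The genuine gap is exactly where you flag it: you assert $\mu>0$ but do not prove it, and the two strategies you sketch are too vague to close the argument. In particular, in the subcritical continuation $\Po u_\epsilon + \epsilon u_\epsilon = \mu_\epsilon f e^{4u_\epsilon}$ there is no a priori reason why $\mu_\epsilon>0$, nor why the sign should survive the limit $\epsilon\searrow 0$. The actual argument is short. From $\langle \Po u_*,u_*\rangle=\mu\int_M fe^{4u_*}u_*\,\dvg>0$, the sign of $\mu$ agrees with that of $\int_M fe^{4u_*}u_*\,\dvg$. Now set $\phi(t)=\int_M fe^{4tu_*}\,\dvg$; then $\phi(0)=\int_M f\,\dvg<0$, $\phi(1)=0$, and $\phi'(1)=4\int_M fe^{4u_*}u_*\,\dvg$. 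If $\mu<0$, then $\phi'(1)<0$, so $\phi(t)>0$ for $t$ just below $1$; combined with $\phi(0)<0$ and continuity, there is $t_0\in(0,1)$ with $\phi(t_0)=0$, hence $t_0 u_*\in X$ (it has zero mean since $u_*$ does). But $E(t_0 u_*)=t_0^2E(u_*)<E(u_*)$, contradicting minimality. Hence $\mu>0$, and the remainder of your proof then goes through as written.
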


In \cite{GX}, Ge and Xu proved that a solution to \eqref{pQe} may be obtained by minimizing the energy
\[
\mathscr{E}(u)=2\langle \Po u,u\rangle
\]
under the constraint
\[
F=\bigg\{u\in H^2(M,g_0):\int_Mfe^{4u} \dvg =0~~\mbox{and}~~\int_Mu \dvg =0\bigg\}.
\]
Here, for $u, v\in H^2(M,g_0)$, the inner product $\langle \Po u,v\rangle$ is defined as follows
$$\langle \Po u,v\rangle=\int_M\Big[\Delta_{g_0}u\Delta_{g_0}v+\frac23R_{g_0}g_0(\nabla_{g_0}u,\nabla_{g_0}v)-2Ric_{g_0}(\nabla_{g_0}u,\nabla_{g_0}v)\Big] \dvg .$$
However, similar to the case of \eqref{eqKW}, the authors showed, in \cite[Theorem A.1]{NZ}, that the way of searching a solution is still successful if we minimize $\mathscr{E}(u)$ under the following constraint
\[
X_f^*=\bigg\{u\in H^2(M,g_0):\int_Mfe^{4u} \dvg =0~~\mbox{ and }~~\int_Me^{4u} \dvg =1\bigg\}.
\]

\section{Main results}

We shall study ``bubbling'' of the prescribed $Q$-curvature equation on $4$-manifolds in two different contexts: the static case and the flow case.

\subsection{Bubbling metrics in the static case}

As in \cite{Ga,Str} , we let $f_0$ be a smooth, non-constant function with $\max_{x\in M}f(x)=0$, and let $f_\lambda=f_0+\lambda$ for any $\lambda\in\mathbb{R}$. By assuming that $ \vol (M,g_0)=1$, we find that if 
\begin{equation}\label{rangeoflambda}
0<\lambda<-\int_Mf_0 \dvg :=\lambda_0,
\end{equation}
then $f_\lambda$ changes sign and $\int_Mf_\lambda \dvg <0$. Hence, it follows from Theorem \ref{GeXu} that there exists a solution $\widetilde{u}_\lambda$ to \eqref{pQe} with $f$ replaced by $f_\lambda$. In addition, \cite[Theorem A.1]{NZ} implies that $\widetilde{u}_\lambda$ can be obtained as 
$$\widetilde{u}_\lambda=u_\lambda+c_\lambda$$ 
from a minimizer $u_\lambda$ of $\mathscr{E}$ in the set $ X_{f_\lambda}^*$. Here $u_\lambda$ satisfies
\begin{equation}\label{pQemini}
 \Po u_\lambda=\alpha_\lambda f_\lambda e^{4u_\lambda},
\end{equation}
with $\alpha_\lambda>0$ and $c_\lambda=(\log\alpha_\lambda)/4$. Moreover, by setting 
\[
\widetilde{g}_{\lambda}=e^{2\widetilde{u}_\lambda}g_0,
\] 
we have
\begin{equation}\label{alphalambda}
\alpha_\lambda=e^{4c_\lambda}=\int_Me^{4(u_\lambda+c_\lambda)} \dvg =\vol(M,\widetilde{g}_\lambda).
\end{equation}
Also, set
\begin{equation}\label{betalambda}
\beta_\lambda:=\mathscr{E}(u_\lambda)=\min\big\{\mathscr{E}(u):u\in X^*_{f_\lambda}\big\}.
\end{equation}
Then one will see from Lemma \ref{unbdofbeta} below that $\beta_\lambda \to +\infty$ as $\lambda\searrow0$; Thus, one should expect the bubbling phenomenon associated with the family of metrics $\widetilde{g}_\lambda$ to occur. 

The purpose of this part of the paper is to characterize the bubbling behavior of $\widetilde{g}_\lambda$. First, when the function $f_0$ has only non-degenerate maxima, we have the following result:

\begin{theorem}\label{main1}
Assume that the Paneitz operator $ \Po 
$ is positive with kernel consisting of constant functions. Let $f_0\leqslant0$ be a smooth, non-constant function with $\max_Mf_0=0$ having only non-degenerate maximum points. Then for suitable $\lambda_k\searrow0$, for $u_k=u_{\lambda_k}$ as above and suitable $I\in\mathbb{N}$, $r_k^{(i)}\searrow0$, $x_k^{(i)} \to x_\infty^{(i)}\in M$ with $f_0(x_\infty^{(i)})=0$, $i\leqslant i\leqslant I$, as $k \to +\infty$ the following hold:
\begin{enumerate}[label=\rm (\roman*)]
 \item $u_k \to -\infty$ locally uniformly on 
 $M_\infty=M\backslash\{x_\infty^{(i)}:1\leqslant i\leqslant I\}$.
 \item In normal coordinates around $x_\infty^{(i)}$, set
\[
z_k^{(i)}=\exp_{x_\infty^{(i)}}^{-1}(x_k^{(i)}), \quad \widetilde{u}_k=u_k\circ\exp_{x_\infty^{(i)}}.
\] 
Then for each $1\leqslant i\leqslant I$, either
\begin{enumerate}[label=\rm (\alph*)]
 \item $\limsup_{k\to\infty}r_k^{(i)}/\sqrt{\lambda_k}=0$ and
\begin{align*}
\widehat{u}_k(z):=
\widetilde u_k \big(z_k^{(i)}+r_k^{(i)}z\big)+&\log r_k^{(i)}
\to \widehat{u}_\infty(z)
\end{align*} 
 strongly in $H_{\rm loc}^4(\mathbf R^4)$, where $\widehat u_\infty$, up to a translation and a scaling, is given by 
 $$\widehat u_\infty(z)=\log\bigg( \frac{4\sqrt{6}}{4\sqrt{6} + |z|^2 }\bigg)$$
 and it induces a spherical metric
\[
\widehat{g}_\infty=e^{4\widehat{u}_\infty}g_{\mathbf R^4}
\]
of $Q$-curvature
\[
Q_{\widehat{g}_\infty}\equiv1
\]
on $\mathbf R^4$ and $1\leqslant I\leqslant 4$, or

\item $\limsup_{k\to\infty}r_k^{(i)}/\sqrt{\lambda_k}>0$ and
\begin{align*}
\widehat{u}_k(z):=
\widetilde u_k\big(z_k^{(i)}+r_k^{(i)}z\big)+\log r_k^{(i)} \to \widehat{u}_\infty(z)
\end{align*}
 strongly in $H^4_{\rm loc}(\mathbf R^4)$, where $\widehat u_\infty$, up to a translation and a scaling, solves
 \begin{equation}\label{eqLimitingEquationW-slow0}
 \Delta_z^2\widehat u_\infty(z)=\Big(1+\frac12{\rm Hess}_{f_0}\big(x_\infty^{(i)}\big)\big[z,z\big]\Big)e^{4\widehat u_\infty(z)},
 \end{equation}
In addition, the metric
\[
\widehat{g}_\infty=e^{4\widehat{u}_\infty}g_{\mathbf R^4}
\]
on $\mathbf R^4$ has finite volume and finite total $Q$-curvature
\[
Q_{\widehat{g}_\infty}(z)=1+\frac12{\rm Hess}_{f_0}\big(x_\infty^{(i)}\big)\big[z,z\big]
\]
and $1\leqslant I\leqslant 8$.
\end{enumerate}
\end{enumerate}
\end{theorem}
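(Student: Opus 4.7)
The plan is to carry out a concentration--compactness blow-up analysis for the minimizers $u_k := u_{\lambda_k}$, in the same spirit as Galimberti--Struwe for the 2D Kazdan--Warner problem but adapted to the fourth-order Paneitz setting. The overall strategy has four stages: locate the concentration points of the conformal volume density, choose a rescaling radius at each such point, pass to a limiting equation on $\mathbf R^4$, and finally classify the limits and count bubbles. Lemma \ref{unbdofbeta} gives $\beta_{\lambda_k} = \mathscr{E}(u_k) \to +\infty$; since $u_k\in X_{f_{\lambda_k}}^*$ and $\int_M e^{4u_k}\dvg=1$, the Adams inequality associated to $\Po$ rules out uniform $H^2$-bounds on $u_k$ and forces the probability measures $e^{4u_k}\dvg$ to concentrate weakly-$*$ at a finite collection of atoms $\{x_\infty^{(i)}\}_{i=1}^I\subset M$. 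The sign constraint $\int_M f_{\lambda_k} e^{4u_k}\dvg=0$ combined with $f_{\lambda_k}\leqslant\lambda_k$ forces each atom to lie in the positivity set of $f_{\lambda_k}$, which shrinks onto $\{f_0=0\}$ as $\lambda_k\searrow0$; non-degeneracy of the maxima of $f_0$ makes this set finite, so $I<\infty$. The equation \eqref{pQemini} together with the density collapse then gives statement (i).

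Near each $x_\infty^{(i)}$, pick $x_k^{(i)}\to x_\infty^{(i)}$ realising a local maximum of $u_k$, and choose $r_k^{(i)}>0$ so that the rescaled $\widehat u_k$ is uniformly bounded near $z=0$. Using that $\Po=\Delta_{g_0}^2+\text{(lower order)}$, with the lower-order terms carrying positive powers of $r_k^{(i)}$ under rescaling, the equation \eqref{pQemini} transforms into
\[
\Delta_z^2\widehat u_k(z)+o(1)=\alpha_{\lambda_k}\,f_{\lambda_k}\bigl(\exp_{x_\infty^{(i)}}(z_k^{(i)}+r_k^{(i)}z)\bigr)\,e^{4\widehat u_k(z)}.
\]
Taylor expanding $f_0$ at the non-degenerate maximum $x_\infty^{(i)}$ with $f_0(x_\infty^{(i)})=0$ gives
\[
\alpha_{\lambda_k} f_{\lambda_k}(y)=\alpha_{\lambda_k}\lambda_k+\tfrac{\alpha_{\lambda_k}}{2}\,{\rm Hess}_{f_0}(x_\infty^{(i)})\bigl[y-x_\infty^{(i)},y-x_\infty^{(i)}\bigr]+O\!\bigl(\alpha_{\lambda_k}|y-x_\infty^{(i)}|^3\bigr),
\]
so two scales compete on the right-hand side: $\alpha_{\lambda_k}\lambda_k$ and $\alpha_{\lambda_k}(r_k^{(i)})^2$. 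The normalisation of the rescaled density pins the constant term to $O(1)$, forcing $\alpha_{\lambda_k}\lambda_k$ to stay bounded. The relative size of $(r_k^{(i)})^2/\lambda_k$ then dictates the dichotomy: in case (a) the quadratic contribution is washed out in the limit, while in case (b) it persists after possibly rescaling the Hessian by a positive constant (this accounts for the ``up to translation and scaling'' freedom in the statement).

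Standard $L^p$ and Schauder theory for the biharmonic operator yield, after extracting subsequences, strong $H^4_{\rm loc}(\mathbf R^4)$-convergence $\widehat u_k\to\widehat u_\infty$. In case (a), $\widehat u_\infty$ solves $\Delta^2\widehat u_\infty=e^{4\widehat u_\infty}$ on $\mathbf R^4$ with $\int_{\mathbf R^4}e^{4\widehat u_\infty}\,dz<\infty$ (finite volume being inherited from $\int_M e^{4u_k}\dvg=1$), and the Chang--Chen/Lin classification of such normal solutions produces precisely the stated spherical family, with $Q_{\widehat g_\infty}\equiv 1$. In case (b), $\widehat u_\infty$ solves \eqref{eqLimitingEquationW-slow0} with finite volume and finite total $Q$-curvature. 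To count bubbles, each bubble carries a definite amount of positive $Q$-mass on $\mathbf R^4$ (the normalized volume of the standard $\mathbf S^4$ in case (a), a slightly larger explicit value in case (b)), while the total positive $Q$-mass of the ambient sequence is $\alpha_{\lambda_k}\int_M f_{\lambda_k}^+ e^{4u_k}\dvg\leqslant\alpha_{\lambda_k}\lambda_k$, itself bounded via an upper bound on $\beta_{\lambda_k}$ coming from an explicit test-function competitor. Matching the two sides yields $I\leqslant 4$ in case (a) and $I\leqslant 8$ in case (b).

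The hardest part is case (b): both identifying the perturbed limiting equation \eqref{eqLimitingEquationW-slow0} in the slow blow-up regime and deriving the improved bound $I\leqslant 8$ require a fourth-order analogue of Struwe's slow-blow-up analysis, and in particular a Liouville-type statement for $\Delta^2 u=P(z)e^{4u}$ with $P$ a non-negative quadratic polynomial. The absence of a maximum principle for $\Po$ forces a reliance throughout on global integral identities and Adams' inequality in place of pointwise comparison; additional care is needed to control the lower-order Paneitz terms under rescaling in order to verify that they leave no residue in the flat-space limit, and to rule out boundary losses of $Q$-mass when matching the bubble sum to $\alpha_{\lambda_k}\lambda_k$.
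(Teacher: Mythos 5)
Your proposal follows the same broad blueprint as the paper (bound the minimal energy, derive a total-curvature bound, locate concentration points, rescale, classify the tangent profile, count bubbles), but it has two genuine gaps, either of which blocks the proof.

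\textbf{Gap 1: boundedness of $\lambda_k\alpha_{\lambda_k}$.} You claim that ``the normalisation of the rescaled density pins the constant term to $O(1)$, forcing $\alpha_{\lambda_k}\lambda_k$ to stay bounded,'' and later that this quantity is ``bounded via an upper bound on $\beta_{\lambda_k}$ coming from an explicit test-function competitor.'' Neither of these works as stated. The rescaling normalization cannot pin $\lambda_k\alpha_{\lambda_k}$ because the rescaling analysis itself (the local $W^{4,s}$ estimates, the concentration-compactness step, etc.) already uses the total-curvature bound $\int_M|Q_{g_k}|\,\dv_{g_k}\leqslant128\pi^2+o(1)$, which is equivalent to $\lambda_k\alpha_{\lambda_k}\leqslant64\pi^2+o(1)$; deducing the bound from the analysis it enables is circular. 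And an upper bound on the minimal energy $\beta_\lambda$ alone gives no control on the Lagrange multiplier $\alpha_\lambda$. The paper bridges this in Lemma \ref{monotonicitybeta}: varying $u_\lambda$ in the direction $f_\lambda$ produces a competitor in $X^*_{f_\mu}$ for nearby $\mu>\lambda$, which shows $\lambda\mapsto\beta_\lambda$ is non-increasing with $\limsup_{\mu\searrow\lambda}(\beta_\mu-\beta_\lambda)/(\mu-\lambda)\leqslant-\alpha_\lambda$; only in combination with the test-function bound $\beta_\lambda\leqslant64\pi^2\log(1/\lambda)+O(1)$ (Lemma \ref{bdofbeta}) and Lebesgue differentiation does one obtain $\liminf_{\lambda\searrow0}\lambda\alpha_\lambda\leqslant64\pi^2$ (Lemma \ref{lambdabetalambdaprime}). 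This variational differentiation is the linchpin of the whole analysis and you omit it entirely.

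\textbf{Gap 2: ruling out degenerate limits in case (a).} You invoke ``Lin's classification of normal solutions'' directly, but this cannot be applied until one has first shown that the constant $r_0$ in the limit equation $\Delta_z^2\widehat u_\infty=r_0e^{4\widehat u_\infty}$ is strictly positive (the paper rules out $r_0=0$ using the polynomial classification of \cite{Mar09} together with \cite{ARS}, and $r_0<0$ using the asymptotics of \cite{Mar08}), and Lin's theorem itself has a second alternative ($-\Delta u\geqslant c_0>0$ everywhere, a non-normal solution) which must be excluded. The paper handles all three by establishing, through the Green's function representation and the derivative estimate (P2), the crucial bound $\int_{\widehat B_{R/2}(0)}|\Delta_{\widehat g_k}\widehat w_k|\,\dv_{\widehat g_k}=O(R^2)$, which contradicts the $O(R^4)$ growth each unwanted alternative would force. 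Without these rule-outs the spherical profile is not identified and the $I\leqslant4$ count does not follow. As a smaller point, you select $x_k^{(i)}$ as local maxima of $u_k$ and choose $r_k^{(i)}$ so that the rescaled profile is ``uniformly bounded near $z=0$,'' whereas the paper selects $(x_k^{(i)},r_k^{(i)})$ by maximizing the local conformal volume to a fixed level $\rho$, which directly yields the no-concentration estimate \eqref{integralw_k<=rho} needed for the $W^{4,s}_{\rm loc}$ bounds; your selection is not obviously available here since $\PP_{g_0}$ has no maximum principle to control a pointwise supremum.
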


\begin{remark}
Unlike the Struwe's result in \cite{Str}, the ``slow blow-up'' case (b) is unable to be ruled out here. In fact, the limiting equation \eqref{eqLimitingEquationW-slow0} associated with blow-up points $x_\infty^{(i)}$ with $1 \leqslant i \leqslant I$ may have a solution with finite energy and finite total curvature. To see this, one may apply a general existence result due to Chang and Chen \cite{ChangChen01} to obtain that there is a solution to
\[
\Delta^2_z\widehat u_\infty=\Big(1+\frac12 \Hess_{f_0}\big(x_\infty^{(i)}\big)\big[z,z\big]\Big)e^{4\widehat u_\infty}
\]
with
\[
\int_{\mathbf R^4} e^{4\widehat u_\infty(z)} \dz <+\infty.
\] 
and
\[
\int_{\mathbf R^4}\Big(1+\frac12\Hess_{f_0}\big(x_\infty^{(i)}\big)[z,z]\Big)e^{4\widehat u_\infty(z)} \dz <+\infty.
\]
Since $x_\infty^{(i)}$ is a non-degenerate maxima of $f_0$, the matrix $\Hess_{f_0}(x_\infty^{(i)})$ is negative definite. Consequently, we also have 
\[
\int_{\mathbf R^4}\Big|1+\frac12\Hess_{f_0}\big(x_\infty^{(i)}\big)[z,z]\Big|e^{4\widehat u_\infty(z)} \dz <+\infty.
\]
\end{remark}

Now, we consider the case that the function $f_0$ may have a degenerate maxima. To describe our next result, motivated by \cite{Str}, we propose the following condition on $f_0$ analog to Condition A in \cite{Str}. 

\noindent{\bf Condition A}: Let $M_0=\{x\in M: f_0(x)=0\}$ and $d(x)={\rm dist}(x,M_0)$ for $x \in M$. There exist $d_0>0$ and $A_0>0$ such that, letting 
\[
K_0=\Big\{z=(z^1,z^2,z^3,z^4)\in\mathbf R^4 :\sqrt{\sum_{i=1}^3(z^i)^2}<z^4, |z|<d_0\Big\}
\]
for any $x\in M$ with $0<d(x)<d_0$ there is a rotated copy $K_x\subset\mathbf R^4$ of $K_0$ with vertex at $x$ such that in Euclidean coordinates $z$ around $x=0$ there holds
\[
A_0\inf_{z\in K_x}|f_0(\exp_x(z))|\geqslant|f_0(x)|.
\]
Since any function on a closed manifold with only non-degenerate maxima admits finitely many maximum points, it is then clear to see that Condition A is automatically satisfied by such functions. Let us take one example of a function $f_0$ satisfying Condition A. We use $(r, \theta_1, \theta_2, \theta_3)$ to denote the polar coordinates in the Euclidean space $\mathbf R^4$. Let $f_0$ be as follows
\[
f_0 (r, \theta_1, \theta_2, \theta_3)= 
\begin{cases}
0 & \text{ if } r \leqslant 1,\\
\displaystyle -e^{-1/(r-1)} \Big( \sum_{i=1}^3 \sin \big( \frac 1{r-1} + \theta_i \big) + 4 \Big) & \text{ if } r > 1.
\end{cases}
\]
Then it is straightforward to verify that the function $f_0$ above satisfies Condition A with $A_0=7$. Furthermore, $f_0$ has degenerate maximum points.

Return to characterizing the bubbling behavior of $\widetilde{g}_\lambda$ in the degenerate situation, our second result reads as follows.

\begin{theorem}\label{main}
 Assume all the conditions, expcept for the assumption of the non-degeneracy of the function $f_0$ at a maxima, in Theorem \ref{main1} above. If, in addition, $(M,g_0)$ is locally conformally flat and $f_0$ satisfies the {\bf Condition A} with $d_0, A_0>0$, then for $u_k$ defined as in the Theorem \ref{main1} there exist suitable $I\in\mathbb{N}$ with $I \leqslant 8$, $r_k^{(i)}\searrow0$ and $x_k^{(i)} \to x_\infty^{(i)}\in M$ with $f_0(x_\infty^{(i)})=0$, $1\leqslant i\leqslant I$ such that the following hold
\begin{enumerate}[label=\rm (\roman*)]
 \item $u_k \to -\infty$ locally uniformly on $M_\infty=M\backslash\{x_\infty^{(i)}:1\leqslant i\leqslant I\}$.
 \item For each $1\leqslant i\leqslant I$, we have
\[
\widehat{u}_k(z):=
 \widetilde{u}_k\big(z_k^{(i)}+r_k^{(i)} z\big)+\log r_k^{(i)} \to \widehat{u}_\infty(z)
\]
 strongly in $H_{\rm loc}^4(\mathbf R^4)$, where $z_k^{(i)}=\exp_{x_\infty^{(i)}}^{-1}(x_k^{(i)})$ and $\widehat{u}_\infty$ induces a metric
\[
\widehat g_\infty=e^{4\widehat{u}_\infty}g_{\mathbf R^4}
\]
on $\mathbf R^4$ of locally bounded curvature and of volume less than or equal $1$.
\end{enumerate}
\end{theorem}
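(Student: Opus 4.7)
The plan is to import the concentration--compactness framework developed by Galimberti \cite{Ga2015} and Struwe \cite{Str} for the Gaussian curvature equation on closed surfaces and adapt it to the fourth-order $Q$-curvature problem on 4-manifolds. Two structural features are essential: the locally conformally flat assumption, which lets me reduce the leading term of $\Po$ to a flat bi-Laplacian in local charts via the conformal covariance of the Paneitz operator, and Condition A on $f_0$, which in the absence of non-degeneracy provides the geometric cone estimate needed to control the rescaled prescribed curvatures.

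First I would show that $\beta_{\lambda_k} \to +\infty$ (Lemma \ref{unbdofbeta}), which forces noncompactness of the sequence $\{u_k\}$ in $H^2(M, g_0)$ and, combined with the normalization $\int_M e^{4 u_k}\dvg = 1$ and equation \eqref{pQemini}, implies $\sup_M u_k \to +\infty$ along a subsequence. Choose a concentration point $x_k^{(1)} \in M$ realizing this maximum, and extract a subsequence with $x_k^{(1)} \to x_\infty^{(1)} \in M$; necessarily $f_0(x_\infty^{(1)}) = 0$, otherwise $f_{\lambda_k}$ would have a definite sign on a fixed neighborhood on which $e^{4 u_k}$ concentrates, violating $\int_M f_{\lambda_k} e^{4 u_k}\dvg = 0$. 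In a locally conformally flat chart around $x_\infty^{(1)}$, writing $g_0 = e^{2\phi} g_{\mathbf{R}^4}$ and using the conformal covariance of $\Po$, equation \eqref{pQemini} becomes a biharmonic equation for $\widetilde u_k := u_k \circ \exp_{x_\infty^{(1)}}$ of the form $\Delta^2 \widetilde u_k = \alpha_k (f_0 + \lambda_k)\, e^{4(\widetilde u_k + \phi)}$ on a small Euclidean ball. Setting $z_k^{(1)} = \exp_{x_\infty^{(1)}}^{-1}(x_k^{(1)})$ and choosing the scale $r_k^{(1)} \searrow 0$ so that the rescaled coefficient remains $O(1)$, the rescaled function
\begin{equation*}
\widehat u_k(z) := \widetilde u_k\bigl(z_k^{(1)} + r_k^{(1)} z\bigr) + \log r_k^{(1)}
\end{equation*}
satisfies
\begin{equation*}
\Delta_z^2 \widehat u_k(z) = \widehat f_k(z)\, e^{4 \widehat u_k(z)}, \qquad \widehat f_k(z) = \alpha_k (f_0 + \lambda_k)\bigl(\exp_{x_\infty^{(1)}}(z_k^{(1)} + r_k^{(1)} z)\bigr)\, e^{4\phi(z_k^{(1)} + r_k^{(1)} z)},
\end{equation*}
and by the normalization of $u_k$ the bubble satisfies $\int_{\mathbf{R}^4} e^{4 \widehat u_k}\,dz \leqslant 1$.

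Next I would derive uniform $H^4_{\rm loc}(\mathbf{R}^4)$ estimates on $\widehat u_k$ from an $\epsilon$-regularity lemma for the biharmonic equation with exponential nonlinearity (of Adams--Fontana type), combined with local $L^\infty$ control of $\widehat f_k$ provided by Condition A and the volume bound above. Standard bootstrap then yields strong $H^4_{\rm loc}$-convergence along a subsequence to a limit $\widehat u_\infty$ solving $\Delta^2 \widehat u_\infty = Q_\infty\, e^{4 \widehat u_\infty}$ with $Q_\infty \in L^\infty_{\rm loc}$, so the metric $\widehat g_\infty = e^{4 \widehat u_\infty} g_{\mathbf{R}^4}$ has locally bounded $Q$-curvature and volume at most $1$. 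If $u_k$ continues to concentrate after removing a fixed-size neighborhood of $x_k^{(1)}$ at the $r_k^{(1)}$-scale, the selection is iterated, defining each new scale $r_k^{(i+1)}$ relative to the previously extracted bubbles in the profile-decomposition manner of \cite{Str}. Termination in at most $I \leqslant 8$ steps follows from a universal lower bound on the positive total curvature $\int_{\mathbf{R}^4} Q_\infty^+\, e^{4 \widehat u_\infty}\,dz$ for every bubble (obtained via the classification of Chang--Chen \cite{ChangChen01}), combined with the uniform upper bound $\int_M |f_0 + \lambda_k|\, e^{4 u_k}\dvg \leqslant \|f_0\|_\infty + \lambda_0$.

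The main obstacle is to exclude concentration of the density $e^{4 \widehat u_k}$ to a Dirac measure at the origin, since such collapse would leave $Q_\infty$ not representable as a locally bounded function and destroy the bubble structure. This is precisely where Condition A is indispensable in the degenerate regime: whereas in Theorem \ref{main1} the non-degeneracy of $f_0$ at $x_\infty^{(i)}$ yielded the Taylor-expanded limit \eqref{eqLimitingEquationW-slow0} and an automatic uniform bound on $\widehat f_k$, here Condition A supplies, for each $x$ near $M_0$, a rotated cone $K_x$ on which $|f_0|$ stays comparable to $|f_0(x)|$. Composing with the rescaling $z \mapsto \exp_{x_\infty^{(1)}}(z_k^{(1)} + r_k^{(1)} z)$ transports this comparability to $\widehat f_k$ on any fixed-size ball in $\mathbf{R}^4$, preventing the coefficient from collapsing to zero on a set of positive measure and thereby yielding the required convergence. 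A secondary technical issue is that the iterated bubble selection must produce pairwise distinct concentration scales; this is handled by defining each $r_k^{(i+1)}$ relative to all previously extracted bubbles, as in \cite{Str}.
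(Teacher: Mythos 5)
Your proposal captures the right broad outline (reduce to a flat bi-Laplacian via local conformal flatness, extract blow-up points and rescale, use Condition~A to control the rescaled coefficient, iterate and terminate via the total-curvature bound), but there are two genuine conceptual errors and one significant gap.

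First, you have the role of Condition~A reversed. You write that Condition~A prevents the rescaled coefficient $\widehat f_k = \alpha_k \widetilde f_{\lambda_k} \circ \Gamma_k$ ``from collapsing to zero on a set of positive measure.'' In fact collapse to zero would be harmless (the limit would just be biharmonic, with zero $Q$-curvature, which is certainly locally bounded). The danger in the degenerate case is the opposite: since $\alpha_k \to +\infty$ and $\widetilde f_0 \circ \Gamma_k \leqslant 0$, the product $\alpha_k \widetilde f_0 \circ \Gamma_k$ could diverge to $-\infty$ pointwise, destroying any $L^\infty_{\rm loc}$ bound on $\widehat f_k$. The paper's Claim~3 uses Condition~A precisely to exclude this: if $\alpha_k |\widetilde f_0(\bar z_k)| \to \infty$ at some point $\bar z_k = \Gamma_k(y_k)$, then the cone estimate forces $\alpha_k|\widetilde f_0 \circ \Gamma_k| \to \infty$ on a whole rescaled cone $\widetilde K_{p_k}$ of aperture bounded away from zero (note $d_0/r_k \to \infty$, so this cone intersects every fixed ball in a set of positive measure); a Fatou argument against the finite bound $\int (\alpha_k\lambda_k - \alpha_k f_{\lambda_k}) e^{4w_k}\dvg \leqslant \mu$ then yields a contradiction. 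Your argument cannot close because it is aimed at ruling out the wrong pathology.

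Second, your plan to ``choose the scale $r_k^{(1)}$ so that the rescaled coefficient remains $O(1)$'' is circular: you later invoke Condition~A to verify exactly that boundedness, so you cannot use it to define $r_k$. There is in general no a priori formula for $r_k$ in terms of $f_0$ near a degenerate maximum. The paper instead chooses $r_k$ and $x_k$ by a volume-concentration criterion ($\sup_{x}\int_{B_{r_k}(x)} e^{4w_k}\dvg = \rho$ for a small fixed $\rho$, together with $\int_{B_{r_k}(y)} e^{4w_k}\dvg \leqslant \rho$ on a slightly larger region), which is defined independently of $f_0$; boundedness of $\alpha_k\widetilde f_0\circ\Gamma_k$ is then a \emph{consequence} proved via Condition~A. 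Choosing $x_k$ to realize $\sup_M u_k$, as you propose, also bypasses the inequality \eqref{integralw_k<=rho-Degenerate}, which is what prevents the rescaled measure $e^{4\widehat w_k}\dz$ from concentrating further.

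Finally, the assertion that ``standard bootstrap'' gives the $H^4_{\rm loc}$ bound glosses over the hardest part of the degenerate case. Before any bootstrap one must first obtain a local upper bound on $\widehat w_k$ and a $W^{3,s}_{\rm loc}$ bound for $1<s<4/3$. The paper does this via a nontrivial decomposition $\widehat w_k = \widehat w_k^{(+)} + \widehat w_k^{(-)} + \widehat w_k^{(0)}$ (Navier problems with data $(\Delta^2\widehat w_k)^\pm$ plus a biharmonic remainder), Brezis--Merle/Lin exponential estimates, the mean-value property for biharmonic functions, and a weak Harnack dichotomy for the biharmonic part $\widehat w_k^{(0)}$ that rules out $\widehat w_k^{(0)} \to -\infty$ via the lower volume bound \eqref{volumeconcentration5}. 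An ``Adams--Fontana $\epsilon$-regularity'' citation does not supply these steps; without them the $H^4$-bound does not follow, since the $L^2$ estimate on $\widehat f_k e^{4\widehat w_k}$ needed to drive the bootstrap requires both the upper bound on $\widehat w_k$ and the local boundedness of $\widehat f_k$ from Claim~3, and the latter in turn requires the limit profile $\widehat w_\infty$ produced by the $W^{3,s}$ compactness.

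As a smaller remark, the bound $I\leqslant 8$ in the paper comes from the $8\pi^2$-concentration lower bound of Lemma~\ref{concentration0} (a Malchiodi-type concentration--compactness dichotomy) combined with the $128\pi^2+o(1)$ bound on total curvature, not from a Chang--Chen classification of bubbles; in the degenerate case the limit equation has a variable, merely locally bounded coefficient, so a classification-based quantization is not available.
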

\begin{remark}
By comparing Theorems \ref{main1} and \ref{main}, one can easily notice that in the degenerate case we made an extra assumption on the manifold $(M,g_0)$ except for the Condition A, that is, we require the manifold $(M,g_0)$ to be locally conformally flat. It would be interesting to investigate the bubbling phenomenon in the degenerate situation without assuming the locally conformal flatness.
\end{remark}

\subsection{Bubbling metrics along the prescribed curvature flow}

In contrast to the statics case, our second goal is to obtain an analogous bubbling behavior described in Theorem \ref{main1} for a family of prescribed $Q$-curvature flows for $f_\lambda$ with suitably chosen initial data in $X_{f_\lambda}$, where
\[
X_{f_\lambda}=\Big\{u\in H^2(M): \int_Mf_\lambda e^{4u} \dvg =0\Big\}.
\]
To describe our second result precisely, let us briefly recall the prescribed $Q$-curvature flow introduced in \cite{NZ}. Let $g_\lambda(t)=e^{2u_\lambda(t)}g_0$ be a family of time-dependent conformal metrics satisfying
$$\frac{\partial g_\lambda}{\partial t}=-2(Q_{g_\lambda}-\alpha_\lambda(t) f_\lambda)g_\lambda$$
with the initial conformal metric $g_\lambda(0)=e^{2u_{0\lambda}}g_0$. In terms of $u_\lambda(t)$, the evolution equation above becomes
\begin{equation}
 \label{eeforu}
 \frac{\partial u_\lambda}{\partial t}=\alpha_\lambda(t) f_\lambda-Q_{g_\lambda}
\end{equation}
with the initial data
\[
u_\lambda(0)=u_{0\lambda}\in X_{f_\lambda}.
\] 
The function $\alpha_\lambda=\alpha_\lambda(t)$ is chosen in such a way that $\int_Mf_\lambda \dv_{g_\lambda}$ remains constant, namely,
\begin{equation}\label{conditionalpha}
 \frac{d}{dt}\int_Mf_\lambda \dv_{g_\lambda}=4\int_M{u_\lambda}_tf_\lambda \dv_{g_\lambda}=4\int_M(\alpha_\lambda f_\lambda-Q_{g_\lambda})f_\lambda \dv_{g_\lambda}=0.
\end{equation}
Solving \eqref{conditionalpha} for $\alpha_\lambda$ gives
\begin{equation*}
 \alpha_\lambda=\frac{\int_Mf_\lambda Q_{g_\lambda} \dv_{g_\lambda}}{\int_Mf_\lambda^2 \dv_{g_\lambda}}.
\end{equation*}
It is easy to verify that
\[
u_\lambda(t)\in X_{f_\lambda}
\]
for all $t\geqslant0$. We thus have by conformal invariant of $Q$-curvature that
$$\frac14\frac{d}{dt} \vol(M,g_\lambda(t))=\int_M{u_\lambda}_t \dv_{g_\lambda}=\alpha_\lambda\int_Mf_\lambda \dv_{g_\lambda}-\int_MQ_{g_\lambda} \dv_{g_\lambda}=0.$$
Normalizing the initial metric $g_\lambda(0)$ to satisfy $\vol(M,g_\lambda(0))=1$, we then get
\begin{equation}\label{volumekeeping}
 \vol (M,g_\lambda(t))=\int_M \dv_{g_\lambda}=\int_M \dv_{g_\lambda(0)}=1
\end{equation}
for all $t>0$. This implies that
\begin{equation}\label{FlowRemainInSpace}
u_\lambda(t)\in X^*_{f_\lambda}
\end{equation}
for all $t\geqslant0$.

By applying \cite[Theorem 1.1]{NZ} to $f_\lambda$, we obtain the sequential convergence of the flow \eqref{eeforu}.

\begin{theorem}[see Ng\^{o}--Zhang \cite{NZ}]\label{NgoZh}
 The flow \eqref{eeforu} has a smooth solution $u_\lambda(t)$ on $[0,+\infty)$. Moreover, there exists a suitable time sequence $(t_j)_j$ with $t_j \to +\infty$ as $j \to +\infty$ and a suitable non-zero constant $\alpha_{\infty\lambda}\in\mathbb{R}$ such that $u_\lambda(t_j) \to u_{\infty\lambda}$ in $C^\infty(M,g_0)$,
 $|\alpha_\lambda(t_j)-\alpha_{\infty\lambda}| \to 0$ and $ \| Q_{g_\lambda}(t_j)-\alpha_{\infty\lambda}f_\lambda \| _{C^\infty(M,g_0)} \to 0$ as $j \to +\infty$. Finally, $u_{\infty\lambda}$ satisfies
 $$ \Po u_{\infty\lambda}=\alpha_{\infty\lambda}f_{\lambda}e^{4u_{\infty\lambda}}.$$
\end{theorem}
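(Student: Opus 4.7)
The plan is to treat \eqref{eeforu} as a parabolic gradient flow of $\mathscr{E}$ and follow the standard four-stage strategy: (i) short-time existence, (ii) a priori estimates giving long-time existence, (iii) extraction of a sequential limit via energy dissipation, and (iv) identification of the limit together with non-vanishing of $\alpha_{\infty\lambda}$.

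Short-time existence follows from the standard theory for quasilinear fourth-order parabolic equations applied to $u_{\lambda,t} = \alpha_\lambda(t) f_\lambda - e^{-4u_\lambda}\Po u_\lambda$, with $\alpha_\lambda(t)$ determined as a smooth functional of $u_\lambda$ via \eqref{conditionalpha}. The key structural fact is the energy dissipation identity: using self-adjointness of $\Po$, the null-case relation $\Po u_\lambda = Q_{g_\lambda} e^{4u_\lambda}$, and the Lagrange multiplier formula for $\alpha_\lambda$, a direct computation gives
\begin{equation*}
\frac{d}{dt}\mathscr{E}(u_\lambda(t)) = -4\int_M (Q_{g_\lambda}-\alpha_\lambda f_\lambda)^2\, d\mu_{g_\lambda} \leqslant 0.
\end{equation*}
Since $\Po$ is positive with kernel consisting of constants, this bounds $\|u_\lambda(t) - \bar u_\lambda(t)\|_{H^2(g_0)}$ uniformly in $t$, where $\bar u_\lambda(t)$ denotes the $g_0$-mean.

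To obtain long-time existence and uniform $C^\infty$ bounds I would control $\bar u_\lambda(t)$ as follows. The volume conservation $\int_M e^{4u_\lambda}\,d\mu_{g_0}=1$ combined with the Adams inequality (which bounds $\int_M e^{4(u-\bar u)}\,d\mu_{g_0}$ in terms of $\langle \Po u,u\rangle$) yields an upper bound on $\bar u_\lambda(t)$. For the lower bound, the conserved constraint $\int_M f_\lambda e^{4u_\lambda}\, d\mu_{g_0}=0$ forces a definite portion of the conformal volume to be carried on $\{f_\lambda>0\}$, preventing $\bar u_\lambda \to -\infty$. The resulting $L^\infty$ bound on $u_\lambda(t)$ together with parabolic Schauder bootstrapping gives uniform $C^\infty$ bounds on every compact time interval, hence long-time existence. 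Integrating the dissipation identity over $[0,+\infty)$ produces $t_j\to+\infty$ along which $\|Q_{g_\lambda}(t_j)-\alpha_\lambda(t_j)f_\lambda\|_{L^2(g_\lambda(t_j))}\to 0$; Arzel\`a--Ascoli then delivers a subsequential $C^\infty$ limit $u_\lambda(t_j)\to u_{\infty\lambda}$ with $\alpha_\lambda(t_j)\to\alpha_{\infty\lambda}$, and passing to the limit in $\Po u_\lambda(t_j) = Q_{g_\lambda}(t_j) e^{4u_\lambda(t_j)}$ yields $\Po u_{\infty\lambda} = \alpha_{\infty\lambda} f_\lambda e^{4u_{\infty\lambda}}$. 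Non-triviality $\alpha_{\infty\lambda}\neq 0$ follows: if $\alpha_{\infty\lambda}=0$ then $\Po u_{\infty\lambda}=0$ forces $u_{\infty\lambda}$ to be constant, whereupon the preserved constraint $\int_M f_\lambda e^{4u_{\infty\lambda}}\,d\mu_{g_0}=0$ contradicts $\int_M f_\lambda\,d\mu_{g_0}<0$.

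The main obstacle is the uniform $L^\infty$ bound on $u_\lambda(t)$ over all of $[0,+\infty)$. Although energy monotonicity provides $H^2$ control modulo constants, ruling out concentration of the conformal factor $e^{4u_\lambda}$ along the flow---which is the very obstruction that does occur in the limit $\lambda\searrow 0$ analyzed elsewhere in this paper---requires careful use of the Adams threshold together with the sign structure of $f_\lambda$, in the spirit of a concentration-compactness argument.
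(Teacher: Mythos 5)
This statement is not proved in the paper: it is imported verbatim from \cite[Theorem 1.1]{NZ}, so there is no internal proof to compare against. The paper's ``proof'' is a single citation, and the role of Theorem~\ref{NgoZh} here is merely to guarantee that the flow exists and sub-converges for each fixed $\lambda$, so that the objects $u_\lambda^\sigma(t)$, $\alpha_\lambda^\sigma(t)$ used in Section~5 are well defined.

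Your outline is a sensible blueprint for how such a proof goes, and the energy dissipation identity is correctly identified: using $\Po u_\lambda = Q_{g_\lambda}e^{4u_\lambda}$, $u_{\lambda,t}=\alpha_\lambda f_\lambda - Q_{g_\lambda}$ and the orthogonality $\int_M(\alpha_\lambda f_\lambda - Q_{g_\lambda})f_\lambda\,\dv_{g_\lambda}=0$ built into the choice of $\alpha_\lambda$, one indeed gets $\frac{d}{dt}\mathscr{E}(u_\lambda)=-4\int_M(Q_{g_\lambda}-\alpha_\lambda f_\lambda)^2\,\dv_{g_\lambda}$. Two remarks, one small and one substantive. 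First, the two bounds on $\overline{u}_\lambda(t)$ are simpler than you suggest and neither needs the sign structure of $f_\lambda$: the upper bound $\overline{u}_\lambda\leqslant 0$ is Jensen's inequality applied to $\int_Me^{4u_\lambda}\,\dvg=1$, and the lower bound follows from $1=e^{4\overline{u}_\lambda}\int_Me^{4(u_\lambda-\overline{u}_\lambda)}\,\dvg\leqslant e^{4\overline{u}_\lambda}\,\Cscr_A\exp(\mathscr{E}(u_{0\lambda})/16\pi^2)$ via energy monotonicity and \eqref{TrudingerInequality}. Second, and this is the real gap, the phrase ``The resulting $L^\infty$ bound on $u_\lambda(t)$'' does not follow from what precedes it: an $H^2$ bound on $u_\lambda-\overline{u}_\lambda$ together with $|\overline{u}_\lambda|\leqslant C$ does not give $\|u_\lambda\|_{L^\infty}\leqslant C$, because $H^2(M^4)\not\hookrightarrow L^\infty$ --- this is exactly the critical exponent that makes Adams' inequality sharp rather than crude. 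Consequently the parabolic bootstrap has no valid starting point, and the subsequent Arzel\`a--Ascoli step (which needs uniform $C^k$ bounds along the sequence $t_j$) is also unsupported. You correctly flag this as the main obstacle in your closing paragraph, but what you propose there (``Adams threshold plus the sign structure of $f_\lambda$, in the spirit of concentration-compactness'') is a description of the problem rather than a solution. The work of \cite{NZ} is devoted precisely to closing this gap, by upgrading the $L^1$ control of $e^{4u_\lambda}$ to $L^p$ control for some $p>1$ uniformly in time (a quantitative no-concentration statement for fixed $\lambda$), and only then bootstrapping; without that ingredient the theorem is not proved.
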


For any $0<\lambda<\lambda_0$ and any $\sigma\in(-\sigma_0,0)$, with the number $\sigma_0=\sigma_0(\lambda)$ to be determined in Lemma \ref{bdoftotalQcurvature1} below, we choose $u_{0\lambda}^\sigma\in X_{f_\lambda}^*$ such that
\begin{equation*}
 \mathscr{E}(u_{0\lambda}^\sigma)\leqslant\beta_\lambda+\sigma^2.
\end{equation*}
For such an initial data $u_{0\lambda}^\sigma$, it follows from Theorem \ref{NgoZh} that the flow \eqref{eeforu} possesses the smooth solution $u_\lambda^\sigma=u_\lambda^\sigma(t)$ with $\alpha_\lambda^\sigma=\alpha_\lambda^\sigma(t)$. Unlike the case of prescribed Gaussian curvature flow in the dimension two, the sign of $\alpha_{\infty\lambda}$ in the $Q$-curvature flow is unable to be determined. So, we have to assume that there exist a sequence $(\lambda_k)_k, k\in\mathbb{N}$ with $\lambda_k\searrow0$ as $k \to +\infty$ such that $\alpha_{\infty\lambda_k}>0$ for all $k$ large. With $\sigma_k$ and $T_k$ defined by \eqref{sequentialbdoftotalQcurvature} below, we let, for a suitable time sequence $(t_k)_k$ with $t_k\geqslant T_k$,
\begin{equation}\label{sequentialflow}
u_k=u_{\lambda_k}^{\sigma_k}(t_k), \alpha_k=\alpha_{\lambda_k}^{\sigma_k}(t_k).
\end{equation}

Now, our second result reads as

\begin{theorem}
 \label{main2}
 Let $f_0$ be, respectively, as in the Theorems \ref{main1} and \ref{main} above. Then for $\lambda_k\searrow0$ with $\alpha_{\infty\lambda_k}>0$, suitable $u_{0\lambda_k}\in X^*_{f_{\lambda_k}}$ with $\mathscr{E}(u_{0\lambda_k})-\beta_{\lambda_k}\leqslant\sigma_k^2\searrow0$, and sufficiently large $t_k\geqslant T_k \to +\infty$ as $k \to +\infty$, the conclusions of Theorems \ref{main1} and \ref{main} hold for $u_k$ defined by \eqref{sequentialflow}.
\end{theorem}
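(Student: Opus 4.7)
The guiding strategy is to select stopping times $t_k$ at which $u_k$ behaves like an approximate minimizer of $\mathscr{E}$ on $X^*_{f_{\lambda_k}}$ with a positive Lagrange multiplier $\alpha_k$, thereby reducing Theorem \ref{main2} to the blow-up analysis already performed for Theorems \ref{main1} and \ref{main}.

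First I would establish monotonicity of $\mathscr{E}$ along \eqref{eeforu}. Using the conformal transformation law $\Po u_\lambda = Q_{g_\lambda} e^{4u_\lambda}$ (valid since $Q_{g_0}=0$), the evolution equation $\partial_t u_\lambda = \alpha_\lambda f_\lambda - Q_{g_\lambda}$, and the defining identity $\int_M f_\lambda (Q_{g_\lambda} - \alpha_\lambda f_\lambda) \dv_{g_\lambda}=0$, a direct calculation yields
\[
\frac{d}{dt}\mathscr{E}(u_\lambda(t))=-4\int_M\bigl(Q_{g_\lambda}-\alpha_\lambda f_\lambda\bigr)^2 \dv_{g_\lambda}.
\]
Combined with the hypothesis $\mathscr{E}(u_{0\lambda}^{\sigma})\leqslant\beta_\lambda+\sigma^2$ and with $\mathscr{E}(u_\lambda^\sigma(t))\geqslant\beta_\lambda$, which holds by \eqref{FlowRemainInSpace}, this gives both $\mathscr{E}(u_\lambda^\sigma(t))\leqslant\beta_\lambda+\sigma^2$ for all $t\geqslant 0$ and the integrated dissipation bound
\[
\int_0^{+\infty}\!\!\int_M\bigl(Q_{g_\lambda^\sigma}-\alpha_\lambda^\sigma f_\lambda\bigr)^2\dv_{g_\lambda^\sigma}\,dt\leqslant \frac{\sigma^2}{4}.
\]
A dyadic pigeonhole argument on the time variable then produces times $t_k\geqslant T_k$ with $T_k\nearrow+\infty$ at which the $L^2$-residual $\|Q_{g_{\lambda_k}(t_k)}-\alpha_k f_{\lambda_k}\|_{L^2(\dv_{g_{\lambda_k}(t_k)})}$ is arbitrarily small; the assumption $\alpha_{\infty\lambda_k}>0$ together with Theorem \ref{NgoZh} ensures that $\alpha_k$ is bounded away from zero.

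At each such $t_k$ the flow solution satisfies the perturbed Euler--Lagrange equation
\[
\Po u_k = \alpha_k f_{\lambda_k} e^{4u_k} + \varepsilon_k,
\]
with $\varepsilon_k \to 0$ in $H^{-2}(M,g_0)$, with $\alpha_k$ uniformly positive, and with $\mathscr{E}(u_k) \in [\beta_{\lambda_k}, \beta_{\lambda_k}+\sigma_k^2]$; by Lemma \ref{unbdofbeta} the energies diverge. Hence $u_k$ is an almost minimizing sequence in $X^*_{f_{\lambda_k}}$, and the entire static blow-up machinery, namely the Adams--Moser--Trudinger control of $\int_M e^{4u_k}\dvg$ away from concentration sets, the total $Q$-curvature bound of Lemma \ref{bdoftotalQcurvature1}, the selection of blow-up points $x_k^{(i)} \to x_\infty^{(i)} \in f_0^{-1}(0)$ together with scales $r_k^{(i)}\searrow 0$, the rescaling $\widehat u_k(z) = \widetilde u_k(z_k^{(i)} + r_k^{(i)} z) + \log r_k^{(i)}$, and the $H^4_{\rm loc}(\mathbf R^4)$ compactness argument for $\Delta^2$, carries over verbatim. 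The dichotomy (a)/(b), the multiplicity bounds $I\leqslant 4$ or $I\leqslant 8$, and the Condition~A analysis then follow exactly as in the proofs of Theorems \ref{main1} and \ref{main}.

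The main obstacle will be to coordinate the sequences $\sigma_k\searrow 0$ and $t_k\to+\infty$ so that the residual $\varepsilon_k$ remains negligible after the rescaling: one has to show that the dissipation integral above controls $\varepsilon_k$ in a norm strong enough that, when restricted to a ball of radius $r_k^{(i)}$ around $x_k^{(i)}$ and dilated by $1/r_k^{(i)}$, its contribution vanishes in $H^{-2}_{\rm loc}(\mathbf R^4)$. This requires passing between the flowing measure $\dv_{g_\lambda}$ and the fixed measure $\dvg$, which is handled by the volume normalization $\vol(M,g_\lambda(t))=1$ from \eqref{volumekeeping} together with the uniform lower bound on $\alpha_k$ and a careful accounting of the weight $e^{4u_k}$ near blow-up points. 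Once this error control is in place, the limit profile $\widehat u_\infty$ on $\mathbf R^4$ satisfies the same equations as in the static theorems, and the full conclusion of Theorem \ref{main2} follows.
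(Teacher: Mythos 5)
Your strategy coincides with the paper's: choose stopping times at which the flow behaves like an approximate critical point, recast the equation at $t=t_k$ as a perturbed prescribed-$Q$-curvature equation, and invoke the static blow-up results (Theorems~\ref{main3} and~\ref{main4}). The dissipation identity $\tfrac{d}{dt}\mathscr{E}(u_\lambda^\sigma)=-4\int_M(Q_{g_\lambda^\sigma}-\alpha_\lambda^\sigma f_\lambda)^2\,\dv_{g_\lambda^\sigma}$ and the resulting integrated bound are correct and this is indeed what makes the choice of $T_k$ possible. Two points should however be fixed. First, the error term must be measured exactly in the form required by the static theorems: setting $h_k=u^{\sigma_k}_{\lambda_k,t}(t_k)$, one has $\Po w_k=(\alpha_k f_{\lambda_k}+h_k)e^{4w_k}$, and \eqref{pHkerror} asks for $\|h_k\|_{L^2(M,g_k)}\to 0$, which is precisely what the flow dissipation (equivalently \cite[Lemma~6.1]{NZ}) supplies. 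Your formulation ``$\varepsilon_k\to 0$ in $H^{-2}(M,g_0)$'' is neither what is available (the weight $e^{4u_k}$ degenerates near concentration points, so an $L^2(\dv_{g_k})$ control on $h_k$ need not yield unweighted $H^{-2}$ decay) nor what is needed; as a consequence, the ``main obstacle'' you raise in the closing paragraph about controlling the residual after rescaling is not a genuine obstacle --- the whole analysis in Section~\ref{sec-ProofStatic} was carried out with the perturbation $h_k e^{4w_k}$ built in, so that this step is automatic once the $L^2(\dv_{g_k})$ bound is verified.

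Second, the other hypothesis of Theorems~\ref{main3} and~\ref{main4}, namely $\limsup_k(\lambda_k\alpha_k)\leqslant 64\pi^2$ as in \eqref{Bound4LambdaAlpha}, does not come from a pigeonhole on the time variable; it comes from Lemma~\ref{bdoftotalQcurvature1} through a coordinated diagonal choice of $\lambda_k\searrow 0$, then $\sigma_k\nearrow 0$, then $T_k\to+\infty$, as in the derivation of \eqref{sequentialbdoftotalQcurvature}. You cite the lemma but do not carry out this extraction, and the positivity of $\alpha_k$ (for which the assumption $\alpha_{\infty\lambda_k}>0$ and the convergence in Theorem~\ref{NgoZh} are used) must be folded into the same selection of $t_k\geqslant T_k$. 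With these adjustments your outline matches the paper's proof.
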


Our paper is organized as the table of contents below.

\tableofcontents

\section{Notations and preliminaries}

In this brief section, we collect some useful facts frequently used throughout the paper. First, given a function $w$ on $M$, let us denote by $\overline w$ the average of $w$ over $(M,g_0)$, namely,
\[
\overline{w}=\int_M w \dvg.
\]
(Keep in mind that $\vol (M, g_0)=1$.) We shall use a double bar for $w$, namely $\doverline w$, if we want to emphasize that the average of $w$ is taking over $M$ with any other conformal metric.

Recalling that the higher order Moser--Trudinger inequality for Paneitz operator $\Po$, known as Adam's inequality; see \cite[Theorem 2]{adams} states that if $ \Po $ is self-adjoint and positive with kernel consisting of constant functions, then there is some constant $\Cscr_A>0$ such that
\begin{equation}\label{eqAdamsTypeInequality}
\int_M \exp\Big( 32\pi^2 \frac{(u-{\overline u})^2}{\langle \Po u, u \rangle } \Big)~ \dvg \leqslant \Cscr_A
\end{equation}
for every $u \in H^2(M, g_0)$. As a consequence of \eqref{eqAdamsTypeInequality} and Young's inequality, we obtain the following inequality
\begin{equation}\label{TrudingerInequality}
\begin{split}
\int_M \exp\big(\alpha(u-{\overline u}) \big) ~ \dvg \leqslant & \Cscr_A \exp \Big( \frac{\alpha^2}{128 \pi^2} \langle \Po u, u \rangle \Big)
\end{split}
\end{equation}
for all real number $\alpha$.

Now we collect some information of Green's function, denoted by $\mathbb G$, of the Paneitz operator $ \Po $. By the results in \cite{CY}, Green's function $\mathbb G$ is symmetric and fulfills the following properties:
\begin{enumerate}
 \item [(P1)] $\mathbb G$ is smooth on $M\times M\backslash\mbox{diagonal}$;
 \item [(P2)] there exists a positive constant $\Cscr_{\mathbb G}$ depending only on $(M,g_0)$ such that
\[
\Big|\mathbb G(x,y)-\frac{1}{8\pi^2}\log\frac{1}{d(x,y)}\Big|\leqslant \Cscr_{\mathbb G}
\]
for any $x, y\in M$ with $x\neq y$; while for its derivatives and for $1 \leqslant j \leqslant 3$ there holds
\[
\big|\nabla^j \mathbb G(x,y) \big|\leqslant \frac{\Cscr_{\mathbb G} }{d(x,y)^{j}}
\]
for any $x, y\in M$ with $x\neq y$.
\end{enumerate}
As clearly described in \cite[page 145]{Ma}, the higher order estimates in (P2) are not shown in \cite{CY} but they can be derived with the same approach, by an expansion of $\mathbb G$ at higher order using the parametrix. 

It is well known that if $\varphi\in L^1(M,g_0)$ with $\overline{\varphi}=0$, then $w$ solves
\[
 \Po w=\varphi,
\] 
if and only if
\begin{equation}\label{green}
 w(x)=\overline{w} +\int_M \mathbb G(x,y)\varphi(y) \dvg .
\end{equation}

For convenience, we cite the following lemma proved in \cite[Lemma 2.3]{Ma}. 

\begin{lemma}\label{Mal06Lemma2.3}
Let $(w_k)_k$ and $(\varphi_k)_k$ be two sequences of functions on $(M,g_0)$ satisfying
\[
\Po w_k=\varphi_k
\]
with $\|\varphi_k\|_{L^1(M,g_0)}\leqslant \alpha_0$ for some positive constant $\alpha_0$ independent of $k$. Then for any $x\in M$, any small $r>0$, and any $s\in[1,4/j)$ with $j=1, 2, 3$, there holds
\[
\int_{B_r(x)}|\nabla^j w_k|^s~\dvg\leqslant Cr^{4-js},
\]
where $C$, independent of $k$, is a positive constant depending only on $\alpha_0, M$, and $s$.
\end{lemma}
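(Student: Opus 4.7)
The plan is to combine the Green's function representation for $\Po$ with the pointwise kernel bounds in (P2) and Minkowski's integral inequality. First I would reduce to the case $\overline{w_k}=0$, which is harmless since $\nabla^j w_k$ depends only on $w_k-\overline{w_k}$. Next, I observe that $\overline{\varphi_k}=0$ automatically: self-adjointness of $\Po$ together with $\Po 1=0$ gives $\int_M\varphi_k\,\dvg=\langle\Po w_k,1\rangle=\langle w_k,\Po 1\rangle=0$. Consequently \eqref{green} applies, and differentiating in $x$ under the integral sign (legitimate via a standard mollification of $\varphi_k$, since $\mathbb G$ is smooth off the diagonal and $\varphi_k\in L^1$) together with the bound $|\nabla_x^j\mathbb G(x,y)|\leqslant \Cscr_{\mathbb G}/d(x,y)^j$ from (P2) produces the pointwise estimate
$$|\nabla^j w_k(x)|\leqslant \Cscr_{\mathbb G}\int_M\frac{|\varphi_k(y)|}{d(x,y)^j}\,\dvg(y).$$

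Next I would take the $L^s(B_r(x))$-norm of both sides and invoke Minkowski's inequality for integrals, pulling the $L^s$-norm in $x'$ inside the integral in $y$:
$$\|\nabla^j w_k\|_{L^s(B_r(x))}\leqslant \Cscr_{\mathbb G}\int_M|\varphi_k(y)|\Big(\int_{B_r(x)}\frac{\dvg(x')}{d(x',y)^{js}}\Big)^{1/s}\dvg(y).$$
The decisive step, which makes essential use of the hypothesis $js<4$, is the uniform kernel estimate
$$\int_{B_r(x)}d(x',y)^{-js}\,\dvg(x')\leqslant Cr^{4-js}\qquad\text{for every }y\in M,$$
which I would prove by splitting into the cases $y\in B_{2r}(x)$ and $y\notin B_{2r}(x)$. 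In the former, $B_r(x)\subset B_{3r}(y)$, so polar coordinates centered at $y$ bound the integral by $C\int_0^{3r}\rho^{3-js}\,d\rho\leqslant Cr^{4-js}$. In the latter, the triangle inequality gives $d(x',y)\geqslant d(x,y)/2\geqslant r$ throughout $B_r(x)$, so the integral is at most $|B_r(x)|\,(d(x,y)/2)^{-js}\leqslant Cr^4(2r)^{-js}=Cr^{4-js}$.

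Substituting this uniform bound back into the Minkowski estimate and using $\|\varphi_k\|_{L^1}\leqslant\alpha_0$ then gives $\|\nabla^j w_k\|_{L^s(B_r(x))}\leqslant C\alpha_0 r^{(4-js)/s}$; raising to the $s$-th power yields the desired inequality with $C$ depending only on $\alpha_0$, $(M,g_0)$, and $s$. The main obstacle I anticipate is verifying the uniform kernel estimate in both the near regime ($y$ close to $B_r(x)$) and the far regime ($y$ away from $B_r(x)$) with the sharp $r^{4-js}$ scaling in each, together with the minor technicality of justifying differentiation of the Green's representation for merely $L^1$ data $\varphi_k$; the latter is handled cleanly by applying the identity to a mollified sequence $\varphi_{k,\epsilon}\to\varphi_k$ in $L^1$ and passing to the limit using the very $L^s$ kernel bound established above.
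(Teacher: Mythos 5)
Your proof is essentially correct, but note first that this lemma is \emph{not} proved in the present paper: it is quoted verbatim from Malchiodi \cite[Lemma 2.3]{Ma}, so there is no in-paper argument to compare against. That said, the route you take---use $\Po 1=0$ and self-adjointness to see $\overline{\varphi_k}=0$, invoke the Green representation \eqref{green}, differentiate under the integral sign (made rigorous by mollifying $\varphi_k$ in $L^1$), apply the kernel bound $|\nabla^j_x\mathbb G(x,y)|\leqslant \Cscr_{\mathbb G}d(x,y)^{-j}$ from (P2), and conclude via Minkowski's integral inequality together with the uniform-in-$y$ estimate $\int_{B_r(x)}d(x',y)^{-js}\,\dvg(x')\leqslant Cr^{4-js}$---is precisely the standard potential-theoretic argument used in \cite{Ma} (itself in the spirit of Brezis--Merle), so your proposal matches the source proof in structure and spirit.

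One small numerical slip: in the far regime $y\notin B_{2r}(x)$ you write $|B_r(x)|\,(d(x,y)/2)^{-js}\leqslant Cr^4(2r)^{-js}$. The inequality $(d(x,y)/2)^{-js}\leqslant(2r)^{-js}$ is not what the triangle inequality gives you; $d(x,y)\geqslant 2r$ only yields $d(x,y)/2\geqslant r$, hence $(d(x,y)/2)^{-js}\leqslant r^{-js}$, giving the bound $Cr^{4}r^{-js}=Cr^{4-js}$ directly. This is harmless since the asserted bound is even smaller, but as written the intermediate step is false for $y\in B_{4r}(x)\setminus B_{2r}(x)$. With that one-line correction the argument is complete and the constant depends only on $\alpha_0$, $(M,g_0)$ and $s$, as required.
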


To end the section, we provide the following concentration-compactness result proved in \cite[Proposition 3.1]{Ma}.
 
\begin{proposition}\label{Mal06Proposition3.1}
Let $(w_k)_k$ and $(\varphi_k)_k$ be two sequences of functions on $(M,g_0)$ satisfying
\[
\Po w_k=\varphi_k
\]
with $\|\varphi_k\|_{L^1(M,g_0)}\leqslant \alpha_0$
for some positive constant $\alpha_0$ independent of $k$. Then, up to a subsequence, we have one of the following alternatives:
\begin{enumerate}
 \item [(i)] either there exist some constant $s>1$ and some positive constant $C$ independent of $k$ such that
\[
 \int_Me^{4s(w_k-\overline{w}_k)}~\dvg\leqslant C,
\]
 \item [(ii)] or there exist points $x_1, x_2, \dots, x_L \in M$ such that for any $r>0$ and any $i\in\{1, \dots, L\}$ one has
\[
\liminf_{k\rightarrow+\infty}\int_{B_r(x_i)}|\varphi_k|~\dvg\geqslant8\pi^2.
\]
\end{enumerate}
\end{proposition}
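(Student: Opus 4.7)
The plan is to combine a concentration-compactness argument for the Radon measures $|\varphi_k|\dvg$ with a localized Brezis--Merle/Adams-type exponential integrability estimate derived from the Green's function representation \eqref{green}.

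After extracting a subsequence, the positive finite measures $|\varphi_k|\dvg$ converge weakly-$\ast$ to a Radon measure $\mu$ on $M$ with $\mu(M)\leqslant \alpha_0$. Define the atomic set
\[
S := \bigl\{x\in M : \mu(\{x\})\geqslant 8\pi^2\bigr\},
\]
which is finite with $|S|\leqslant \alpha_0/(8\pi^2)$. If $S=\{x_1,\ldots,x_L\}\neq \emptyset$, then weak-$\ast$ lower semicontinuity on the open balls $B_r(x_i)$ yields
\[
\liminf_{k\to+\infty}\int_{B_r(x_i)}|\varphi_k|\dvg \geqslant \mu(B_r(x_i))\geqslant \mu(\{x_i\}) \geqslant 8\pi^2
\]
for every $r>0$, giving alternative (ii). Otherwise $\mu(\{x\})<8\pi^2$ for every $x\in M$; by right-continuity of $r\mapsto \mu(\overline{B_r(x)})$ as $r\searrow 0$, for each $x$ one finds $r_x,\delta_x>0$ with $\mu(\overline{B_{2r_x}(x)})\leqslant 8\pi^2-3\delta_x$, and upper semicontinuity of weak-$\ast$ convergence on closed sets then gives $\int_{B_{2r_x}(x)}|\varphi_k|\dvg \leqslant 8\pi^2-2\delta_x$ for $k$ large. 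I would then deduce alternative (i) from a uniform-in-$k$ bound $\int_{B_{r_x}(x)} e^{4s_x(w_k-\overline w_k)}\dvg\leqslant C_x$ with some $s_x>1$, extracted via a finite sub-covering of $M$ and $s:=\min_x s_x$.

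The crux is this local estimate, a Brezis--Merle-type inequality for the Paneitz operator. Fix such a ball $B_{2r_z}(z)$ and set $\gamma_k:=\|\varphi_k\|_{L^1(B_{2r_z}(z))}\leqslant 8\pi^2-2\delta_z$. Using \eqref{green} I split
\[
w_k(x)-\overline w_k = \int_{B_{2r_z}(z)}\mathbb G(x,y)\varphi_k(y)\dvg + \int_{M\setminus B_{2r_z}(z)}\mathbb G(x,y)\varphi_k(y)\dvg;
\]
by (P1) and $\|\varphi_k\|_{L^1}\leqslant \alpha_0$, the second summand is uniformly bounded on $B_{r_z}(z)$. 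For the first, the pointwise estimate $e^{4sa}\leqslant e^{4s|a|}$ together with Jensen's inequality applied to the probability measure $\gamma_k^{-1}|\varphi_k|\dvg$ and the logarithmic bound $|\mathbb G(x,y)|\leqslant (8\pi^2)^{-1}\log(1/d(x,y))+\Cscr_{\mathbb G}$ from (P2) yield
\[
\exp\!\Bigl(4s\!\int_{B_{2r_z}(z)}\!\mathbb G(x,y)\varphi_k(y)\dvg\Bigr) \leqslant C\int_{B_{2r_z}(z)} d(x,y)^{-4s\gamma_k/(8\pi^2)}\,\frac{|\varphi_k(y)|}{\gamma_k}\dvg.
\]
Integrating over $x\in B_{r_z}(z)$ and using Fubini, the problem reduces to finiteness of $\sup_y\int_{B_{r_z}(z)} d(x,y)^{-4s\gamma_k/(8\pi^2)}\dvg$, which holds precisely when $4s\gamma_k/(8\pi^2)<4$, i.e.\ $s<8\pi^2/\gamma_k$. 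Since $\gamma_k\leqslant 8\pi^2-2\delta_z$ this admits some $s>1$; the main technical care is to keep $s$ uniform across the finite covering, which is ensured by choosing $s<\min_z 8\pi^2/(8\pi^2-2\delta_z)$.
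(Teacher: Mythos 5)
Your proof is correct and follows essentially the same Brezis--Merle concentration-compactness strategy used to establish this statement in Malchiodi's paper \cite{Ma}, which the present paper cites without reproducing. The dichotomy via the weak-$\ast$ limit measure and the local exponential estimate obtained from the Green's representation \eqref{green}, Jensen's inequality, Fubini's theorem, and the logarithmic asymptotics (P2) of the Paneitz Green's function are precisely the standard ingredients of that argument.
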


\begin{remark}\label{Prop3.1}
As clearly stated in \cite{Ma}, Proposition \ref{Mal06Proposition3.1} remains valid if the metric $g_0$ is replaced by a sequence of metrics $(g_k)_k$ which is uniformly bounded in $C^{N}(M,g_0)$ for any $N\in\mathbb{N}$. It also holds true if one replaces $M$ by any bounded open ball in $\mathbf{R}^4$, in which all the functions are compactly supported.
\end{remark}


\section{Bubbling in the static case}
\label{sec-ProofStatic}

In this section, we are going to prove the ``bubbling'' phenomena in the static case, namely, Theorems \ref{main1} and \ref{main}. 

\subsection{Bounds for total curvature} 

We derive, in this subsection, the bounds for the total $Q$-curvature. As an initial step, we show the unboundedness of the minimum energy $\beta_\lambda$ defined in \eqref{betalambda}. 

\begin{lemma}\label{unbdofbeta}
 As $\lambda\searrow0$, there holds
 $\beta_\lambda \to +\infty.$
\end{lemma}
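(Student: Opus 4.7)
The plan is to argue by contradiction. Assume that $\beta_{\lambda_k}\leqslant C$ along some sequence $\lambda_k \searrow 0$, and write $u_k = u_{\lambda_k}$. The strategy is to extract, up to a subsequence, a limit $u_\infty$ whose properties contradict the non-existence of a solution to $\Po u = f_0 e^{4u}$ (which is obstructed by the fact that $f_0 \leqslant 0$ but $f_0 \not\equiv 0$).

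First, I would combine the volume constraint $\int_M e^{4u_k} \dvg = 1$ (embedded in $X^*_{f_{\lambda_k}}$) with the Adams-type inequality \eqref{TrudingerInequality} to pin down $\overline{u_k}$. By Jensen's inequality, $\overline{u_k} \leqslant 0$. For the lower bound, applying \eqref{TrudingerInequality} with $\alpha = 4$ and using $\mathscr{E}(u_k) = 2 \langle \Po u_k, u_k\rangle$ yields
\[
e^{-4\overline{u_k}} = \int_M e^{4(u_k - \overline{u_k})} \dvg \leqslant \Cscr_A \exp\!\big(\tfrac{1}{8\pi^2}\langle \Po u_k, u_k\rangle\big) \leqslant \Cscr_A \exp\!\big(\tfrac{\beta_{\lambda_k}}{16\pi^2}\big),
\]
which is bounded under the contradiction hypothesis, so $\overline{u_k}$ is uniformly bounded. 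Since $\Po$ is a positive fourth-order elliptic operator with kernel equal to the constants, $\langle \Po \cdot,\cdot \rangle^{1/2}$ is equivalent to the $H^2$-seminorm on zero-mean functions, and hence $(u_k)_k$ is bounded in $H^2(M,g_0)$. Extracting a subsequence, $u_k \rightharpoonup u_\infty$ weakly in $H^2$, strongly in every $L^p$, and almost everywhere.

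The main analytic point is the strong convergence $e^{4 u_k} \to e^{4 u_\infty}$ in $L^1(M, g_0)$, which I would obtain from Vitali's theorem: uniform integrability follows by applying \eqref{TrudingerInequality} with a slightly larger exponent $\alpha = 4 s$ for some $s > 1$, which together with the bound on $\overline{u_k}$ gives a uniform bound on $\int_M e^{4 s u_k}\dvg$. Then, passing to the limit in the two constraints defining $X^*_{f_{\lambda_k}}$, I obtain
\[
\int_M e^{4 u_\infty} \dvg = 1, \qquad \int_M f_0\, e^{4 u_\infty} \dvg = \lim_{k\to\infty} \int_M (f_0 + \lambda_k) e^{4 u_k} \dvg = 0.
\]

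The contradiction then follows immediately from the sign structure of $f_0$: since $u_\infty \in H^2(M)$ is finite a.e., $e^{4 u_\infty} > 0$ almost everywhere, while $f_0 \leqslant 0$ is non-constant and hence strictly negative on a set of positive measure, forcing $\int_M f_0 \, e^{4 u_\infty}\dvg < 0$, in contradiction with the second identity above. Hence $\beta_\lambda \to +\infty$ as $\lambda \searrow 0$. The one delicate point in this outline is the strong $L^1$ convergence of $e^{4 u_k}$ from mere weak $H^2$ convergence, but this is exactly what the Adams-type inequality \eqref{TrudingerInequality} is designed to supply; all other steps are standard variational machinery.
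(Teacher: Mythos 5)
Your proof is correct, but it takes a genuinely different route from the paper's. The paper's argument is a short, quantitative chain of inequalities: starting from the identity $\int_M |f_0| e^{4u_\lambda} \dvg = \int_M (\lambda - f_\lambda) e^{4u_\lambda} \dvg = \lambda$ (using the constraint $\int_M f_\lambda e^{4u_\lambda}\dvg = 0$ and $\int_M e^{4u_\lambda}\dvg = 1$), it applies Cauchy--Schwarz to get $0 < \big(\int_M |f_0|\dvg\big)^2 \leqslant \int_M |f_0| e^{-4u_\lambda}\dvg \cdot \int_M |f_0| e^{4u_\lambda}\dvg$, then bounds the first factor by $\|f_0\|_\infty\, e^{-4\overline{u}_\lambda}\int_M e^{-4(u_\lambda-\overline u_\lambda)}\dvg \leqslant \|f_0\|_\infty\, \Cscr_A^2 \exp(\beta_\lambda/8\pi^2)$ via two applications of the Adams inequality, and the second factor by $\lambda$. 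This yields a quantitative contradiction as $\lambda \searrow 0$ with no compactness machinery. Your approach instead extracts a weak $H^2$ limit $u_\infty$ (using the coercivity of $\Po$ on zero-mean functions plus the bound on $\overline{u_k}$), proves $e^{4u_k}\to e^{4u_\infty}$ strongly in $L^1$ via Vitali and the Adams inequality with a supercritical exponent, passes to the limit in both constraints, and derives a contradiction from $\int_M f_0 e^{4u_\infty}\dvg = 0$ together with $f_0\leqslant 0$ non-constant. Both are valid; the paper's argument is shorter and avoids Rellich compactness and the Vitali convergence theorem, while yours is a more standard ``minimizing sequence'' contradiction that, as a by-product, identifies the obstruction to existence in the limit problem rather than leaving it implicit in an algebraic estimate. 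The one bookkeeping point worth flagging: your Adams bound $e^{-4\overline{u_k}}\leqslant \Cscr_A \exp(\beta_{\lambda_k}/16\pi^2)$ and the uniform $L^s$ control of $e^{4u_k}$ both rely on $\overline{u_k}\leqslant 0$ (Jensen), and you state this clearly; all constants then check out.
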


\begin{proof}
Assume by contradiction that $\beta_\lambda\leqslant C_1$ for some constant $C_1$. Thanks to $u_\lambda \in X_{f_\lambda}^*$, we can use $\mathscr{E}(u_\lambda)$ to bound $\exp (-4 \overline u_\lambda )$ from above by applying Adams' inequality \eqref{TrudingerInequality} as follows
\[
e^{-4 \overline u_\lambda} = \int_M \exp\big(4 (u_\lambda- \overline u_\lambda) \big) ~ \dvg \leqslant \Cscr_A \exp \Big( \frac{\mathscr{E}(u_\lambda)}{16 \pi^2} \Big).
\]
Keep in mind that $|f_0|=-f_0=\lambda-f_\lambda$. From this together with H\"older's inequality we can estimate
 \begin{align*}
 0 &<\Big(\int_M|f_0| \dvg \Big)^2 \\
&\leqslant \int_M|f_0|e^{-4u_\lambda} \dvg \int_M|f_0|e^{4u_\lambda} \dvg \\
& \leqslant \|f_0 \|_{L^\infty (M, g_0)} e^{-4 \overline u_\lambda} \int_M e^{-4(u_\lambda - \overline u_\lambda)} \dvg \int_M(\lambda-f_\lambda)e^{4u_\lambda} \dvg \\
&\leqslant \lambda \|f_0 \|_{L^\infty (M, g_0)} \Cscr_A^2 \exp \Big( \frac{\beta_\lambda}{8\pi^2} \Big) ,
 \end{align*}
which is obviously a contradiction if $\lambda$ is sufficiently small.
\end{proof}

The following monotonicity property result is a key gradient for the uniform bound of the total $Q$-curvature of the metric $\widetilde{g}_\lambda=e^{4\widetilde{u}_\lambda}g_0$.

\begin{lemma}\label{monotonicitybeta}
The function $\lambda\mapsto\beta_\lambda$ is non-increasing in $\lambda$ for small $0<\lambda<\lambda_0$ and 
\[
\limsup_{\mu\searrow\lambda}\frac{\beta_\mu-\beta_\lambda}{\mu-\lambda}\leqslant-\alpha_\lambda,
\]
where $\lambda_0$ is given in \eqref{rangeoflambda}.
\end{lemma}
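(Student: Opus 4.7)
The strategy will be to realize $\beta_\lambda$ as the value of a constrained variational problem with a parameter and derive the one-sided derivative bound by an envelope-type argument. Given the minimizer $u_\lambda\in X^*_{f_\lambda}$, the plan is to produce an explicit competitor in $X^*_{f_\mu}$ for every $\mu$ slightly larger than $\lambda$ by perturbing $u_\lambda$ affinely along the two natural directions that deform the constraints: adding a constant (to control the volume) and adding a multiple of $f_\lambda$ (to control the signed integral). The constant piece leaves $\mathscr{E}$ unchanged while the $f_\lambda$-piece will produce the $-\alpha_\lambda(\mu-\lambda)$ drop in energy.

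Concretely, I will fix $\lambda\in(0,\lambda_0)$, set $v_{s,c}:=u_\lambda+sf_\lambda+c$, and study
\[
\Phi(s,c;\mu):=\Big(\int_M e^{4v_{s,c}}\dvg-1,\;\int_M f_\mu e^{4v_{s,c}}\dvg\Big).
\]
Since $u_\lambda\in X^*_{f_\lambda}$, the point $(0,0;\lambda)$ lies on $\Phi^{-1}(0,0)$. A short computation yields
\[
D_{s,c}\Phi\big|_{(0,0;\lambda)}=\begin{pmatrix}0&4\\ 4A&0\end{pmatrix},\qquad A:=\int_M f_\lambda^2\,e^{4u_\lambda}\dvg,
\]
and $A>0$ because the non-constant smooth function $f_\lambda$ cannot vanish on a full-measure set. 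The implicit function theorem then produces smooth curves $s(\mu),c(\mu)$ with $s(\lambda)=c(\lambda)=0$ and $v_{s(\mu),c(\mu)}\in X^*_{f_\mu}$. Since $\partial_\mu\Phi|_{(0,0;\lambda)}=(0,1)^\top$, implicit differentiation gives $s'(\lambda)=-1/(4A)$ and $c'(\lambda)=0$.

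The energy of the competitor is then expanded using $\Po(\textrm{const})=0$ together with $\langle \Po w,1\rangle=\int_M\Po w\,\dvg=0$ for every $w\in H^2(M,g_0)$; these identities kill all mixed terms involving $c$, leaving
\[
\mathscr{E}(v_{s,c})=\beta_\lambda+4s\,\langle \Po u_\lambda,f_\lambda\rangle+2s^2\,\langle \Po f_\lambda,f_\lambda\rangle=\beta_\lambda+4s\,\alpha_\lambda A+2s^2\,\langle \Po f_\lambda,f_\lambda\rangle,
\]
where I have used the Euler--Lagrange equation $\Po u_\lambda=\alpha_\lambda f_\lambda e^{4u_\lambda}$ (note that the constraint $\int_M e^{4u}\dvg=1$ contributes a Lagrange multiplier which vanishes after integration, leaving only $\alpha_\lambda$). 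Substituting $s(\mu)=-(\mu-\lambda)/(4A)+O((\mu-\lambda)^2)$ gives
\[
\beta_\mu\leqslant \mathscr{E}\bigl(v_{s(\mu),c(\mu)}\bigr)=\beta_\lambda-\alpha_\lambda(\mu-\lambda)+O\bigl((\mu-\lambda)^2\bigr),
\]
and dividing by $\mu-\lambda>0$ before passing to the $\limsup$ yields the stated inequality. The same inequality also shows that $\beta$ is upper semi-continuous with a strictly negative right derivative, and a standard real-analysis lemma (controlling the supremum of $\mu\mapsto\beta_\mu-\beta_{\lambda'}+\varepsilon(\mu-\lambda')$ on any subinterval by combining upper semi-continuity with the pointwise one-sided bound) upgrades this to non-increasingness of $\beta$ on the whole of $(0,\lambda_0)$.

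The main technical obstacle is the non-degeneracy of the linearized constraint map, i.e.\ the invertibility of $D_{s,c}\Phi$, which reduces to $A>0$ and in turn to the observation that $f_\lambda$ cannot vanish identically with respect to the positive measure $e^{4u_\lambda}\dvg$ (it is non-constant and smooth, and the first constraint forces it to change sign on the support of that measure). A secondary subtlety is the upgrade from the pointwise right-derivative bound to global monotonicity, which requires only the upper semi-continuity of $\beta$ that the same comparison construction delivers.
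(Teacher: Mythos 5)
Your proposal is correct and follows essentially the same route as the paper's proof: you perturb the minimizer $u_\lambda$ by $\sigma f_\lambda$ plus a normalizing constant, track how the two constraints move to identify the resulting $\mu$, and then expand $\mathscr{E}$ using the Euler--Lagrange equation $\Po u_\lambda = \alpha_\lambda f_\lambda e^{4u_\lambda}$ to obtain $\beta_\mu \leqslant \beta_\lambda - \alpha_\lambda(\mu-\lambda) + O((\mu-\lambda)^2)$. The only cosmetic difference is that you package the choice of $s(\mu)$ and $c(\mu)$ via the implicit function theorem, whereas the paper does the equivalent direct Taylor expansion and defines $\mu$ as an explicit function of the perturbation parameter $\sigma$; the non-degeneracy condition $A=\int_M f_\lambda^2 e^{4u_\lambda}\dvg>0$ that you isolate is precisely what makes the paper's implicit choice of $\mu(\sigma)$ well-defined.
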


\begin{proof}
Fix $\lambda \in (0, \lambda_0)$. As always, let $u_\lambda\in X_{f_\lambda}^*$ be a minimizer of $\mathscr{E}$ as above, namely $\int_Mf_\lambda e^{4u_\lambda} \dvg = 0$ and $\int_M e^{4u_\lambda} \dvg = 1$. Then for small $\sigma\in\mathbb{R}$ we have, by Taylor's expansion, that
 \begin{align*}
 \int_Mf_\lambda e^{4(u_\lambda+\sigma f_\lambda)} \dvg =&\int_Mf_\lambda \Big[e^{4(u_\lambda+\sigma f_\lambda)}-e^{4u_\lambda}\Big] \dvg \\
 =&\int_Mf_\lambda e^{4u_\lambda}\Big[e^{4\sigma f_\lambda}-1\Big] \dvg \\
 =&4\sigma\int_Mf_\lambda^2 e^{4u_\lambda} \dvg +O(\sigma^2)
 \end{align*}
and
\begin{align*}
 \int_M e^{4(u_\lambda+\sigma f_\lambda)} \dvg =&1+\int_M e^{4u_\lambda}\Big[e^{4\sigma f_\lambda}-1\Big] \dvg \\
=&1+4\sigma\int_Mf_\lambda e^{4u_\lambda} \dvg +O(\sigma^2),\\
=&1+O(\sigma^2).
 \end{align*}
 So, for $0<|\sigma| \ll 1$ sufficiently small, if we let 
 $$\mu=\lambda-4\sigma\int_Mf_\lambda^2 e^{4u_\lambda} \dvg +O(\sigma^2),$$
 then we can find some constant $c$ such that
 \begin{equation}
 \label{elementinXmu}
 u_\lambda+\sigma f_\lambda+c\in X_{f_\mu}^*.
 \end{equation}
In particular, for $\sigma<0$ sufficiently close to zero, we have $\mu>\lambda$ and $\sigma=O(\mu-\lambda)$.
Notice that it follows from \eqref{pQemini} that
\begin{align*}
 \mathscr{E}(u_\lambda+\sigma f_\lambda+c) =& 2 \langle u_\lambda+\sigma f_\lambda+c, \Po u_\lambda+\sigma \Po f_\lambda \rangle \\
=&\mathscr{E}(u_\lambda)+4\sigma\int_M f_\lambda \Po u_\lambda \dvg + 2 \sigma^2 \langle f_\lambda , \Po f_\lambda \rangle \\
=&\mathscr{E}(u_\lambda)+4\sigma \alpha_\lambda \int_M f_\lambda^2e^{4u_\lambda} \dvg +O(\sigma^2).
\end{align*}
Now, by \eqref{elementinXmu}, we get that
\begin{align*}
 \beta_\mu \leqslant&\mathscr{E}(u_\lambda+\sigma f_\lambda+c)\\
\leqslant & \mathscr{E}(u_\lambda)+4\sigma\alpha_\lambda\int_M f_\lambda^2e^{4u_\lambda} \dvg +O(\sigma^2)\\
 =&\beta_\lambda-\alpha_\lambda(\mu-\lambda)+O((\mu-\lambda)^2)<\beta_\lambda,
\end{align*}
for $\sigma<0$ sufficiently close to zero. Hence the map $\lambda\mapsto\beta_\lambda$ is non-increasing and 
\[
\limsup_{\mu\searrow\lambda}\frac{\beta_\mu-\beta_\lambda}{\mu-\lambda}\leqslant-\alpha_\lambda
\]
as claimed.
\end{proof}

We can find the following bound on $\beta_\lambda$.

\begin{lemma}\label{bdofbeta}
 There holds
 $$\limsup_{\lambda\searrow0}\frac{\beta_\lambda}{\log(1/\lambda)}\leqslant64\pi^2.$$
\end{lemma}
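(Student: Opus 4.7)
The plan is to produce, for each small $\lambda>0$, an admissible test function $u_\lambda \in X_{f_\lambda}^*$ whose Paneitz energy satisfies $\mathscr{E}(u_\lambda) \leqslant 64\pi^2 \log(1/\lambda) + O(1)$. Since $\beta_\lambda \leqslant \mathscr{E}(u_\lambda)$ by definition, dividing by $\log(1/\lambda)$ and passing to the $\limsup$ then gives the claim.

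To construct $u_\lambda$, pick $x_0\in M$ with $f_0(x_0)=0=\max_M f_0$ and work in normal coordinates on a geodesic ball $B_{r_0}(x_0)$ of radius smaller than the injectivity radius. For a scale $\varepsilon>0$ to be determined, set
\[
\phi_\varepsilon(x) = -\eta(|x|/r_0)\,\log\bigl(\varepsilon^2+|x|^2\bigr),
\]
where $\eta$ is a standard smooth cutoff equal to $1$ on $[0,1]$ and $0$ outside $[0,2]$. Define $u_\varepsilon=\phi_\varepsilon+c_\varepsilon$, with $c_\varepsilon$ chosen by the volume normalization $\int_M e^{4u_\varepsilon}\dvg=1$. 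A direct calculation in polar coordinates, using $\int_{\mathbf R^4}(\varepsilon^2+|x|^2)^{-4}\,dx = \pi^2/(6\varepsilon^4)$, yields
\[
\int_M e^{4\phi_\varepsilon}\dvg = \frac{\pi^2}{6\varepsilon^4}+O(1), \qquad c_\varepsilon=\log\varepsilon+O(1).
\]
Expanding $f_0$ at $x_0$ (by $f_0(x)=\tfrac12\Hess_{f_0}(x_0)[x,x]+O(|x|^3)$ in the non-degenerate case, or by a higher-order Taylor expansion at a degenerate maximum) and matching with the moment integrals of $e^{4\phi_\varepsilon}$, one obtains
\[
\int_M f_0 \,e^{4u_\varepsilon}\dvg = -A(\varepsilon) + o\bigl(A(\varepsilon)\bigr),
\]
for some continuous positive function $A$ with $A(\varepsilon)\searrow0$. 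The remaining constraint $\int_M f_\lambda e^{4u_\varepsilon}\dvg = -A(\varepsilon)+\lambda=0$ therefore fixes $\varepsilon=\varepsilon(\lambda)\searrow0$; in particular $\log(1/\varepsilon(\lambda))\leqslant\log(1/\lambda)+O(1)$ (indeed in the non-degenerate case $\varepsilon^2\sim\lambda$, so $\log(1/\varepsilon)=\tfrac12\log(1/\lambda)+O(1)$).

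For the energy bound, additive constants drop out of $\mathscr{E}$, so $\mathscr{E}(u_\varepsilon)=2\langle \Po\phi_\varepsilon,\phi_\varepsilon\rangle$. From the explicit formula
\[
\Delta_{\mathbf R^4}\phi_\varepsilon(x) = -\frac{8\varepsilon^2+4|x|^2}{(\varepsilon^2+|x|^2)^2}
\]
on the normal chart, integration over the annulus $\{\varepsilon<|x|<r_0\}$ yields
\[
\int_M (\Delta_{g_0}\phi_\varepsilon)^2\,\dvg = 32\pi^2\log(1/\varepsilon)+O(1),
\]
while the scalar-curvature and Ricci corrections in the Paneitz bilinear form are controlled by $\int_M|\nabla\phi_\varepsilon|^2\dvg$, which stays $O(1)$ uniformly in $\varepsilon$ (since $|\nabla\phi_\varepsilon|^2\sim 4|x|^2/(\varepsilon^2+|x|^2)^2$ integrates to $O(r_0^2)$ in 4D). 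Combining, $\mathscr{E}(u_\varepsilon) = 64\pi^2\log(1/\varepsilon)+O(1) \leqslant 64\pi^2\log(1/\lambda)+O(1)$.

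The main technical obstacle is the simultaneous enforcement of both constraints defining $X_{f_\lambda}^*$ with a single scale parameter. In the non-degenerate case the monotonicity $A(\varepsilon)\sim\varepsilon^2$ makes the sign-integral constraint invertible for small $\lambda$ and everything closes cleanly. At a degenerate maximum one may need an additional small auxiliary perturbation of $u_\varepsilon$ supported away from $x_0$ to close the constraint exactly; one then has to verify that this perturbation contributes only $O(1)$ to $\mathscr{E}$, so that the leading coefficient $64\pi^2$ is preserved.
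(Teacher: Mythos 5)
Your proposal is correct, and takes a genuinely different route from the paper's. The paper constructs a truncated-logarithm profile $z_\lambda$, rescales it to live in $B_{\sqrt\lambda/L}(p_0)$ in conformal normal coordinates, closes the two constraints defining $X_{f_\lambda}^*$ via a multiplicative stretch factor $s(\lambda)$ plus an additive constant, and then shows $s(\lambda)\leqslant 2+O(1/\log(1/\lambda))$ together with $\langle\Po w_\lambda,w_\lambda\rangle\leqslant 4\pi^2(1+\epsilon)(A_0^2+1)\log(1/\lambda)+O(1)$; letting $\epsilon\to 0$ and $A_0\to 1$ yields $2\cdot 2^2\cdot 8\pi^2=64\pi^2$. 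You instead use the standard four-dimensional bubble $\phi_\varepsilon=-\eta\log(\varepsilon^2+|x|^2)$ and tune the single concentration scale $\varepsilon(\lambda)$ by the sign constraint, after which the additive normalization $c_\varepsilon$ costs nothing since $\mathscr E$ kills constants. Your route is cleaner (no auxiliary stretch factor, no conformal normal coordinates) and in fact proves a sharper bound: writing $A(\varepsilon):=-\int_M f_0\,e^{4u_\varepsilon}\dvg>0$, the Taylor estimate $|f_0(x)|=O(|x|^2)$ at the maximum (valid degenerate or not, since $\nabla f_0(p_0)=0$) together with $e^{4c_\varepsilon}\sim 6\varepsilon^4/\pi^2$ gives $A(\varepsilon)\leqslant C\varepsilon^2$, so any root of $A(\varepsilon)=\lambda$ satisfies $\varepsilon\geqslant c\sqrt\lambda$, whence $\mathscr{E}(u_\varepsilon)=64\pi^2\log(1/\varepsilon)+O(1)\leqslant 32\pi^2\log(1/\lambda)+O(1)$ and $\limsup_{\lambda\searrow 0}\beta_\lambda/\log(1/\lambda)\leqslant 32\pi^2$, strictly better than $64\pi^2$. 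One small simplification: the ``auxiliary perturbation'' you contemplate at a degenerate maximum is not needed — $A$ is continuous, positive, and tends to $0$ as $\varepsilon\searrow 0$, so the intermediate value theorem already closes the constraint, and the lower bound $\varepsilon\geqslant c\sqrt\lambda$ that drives the energy estimate holds for any root of $A(\varepsilon)=\lambda$ regardless of the degeneracy.
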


\begin{proof}
Let $p_0\in M$ be such that $f_0(p_0)=0$ and assume that $\lambda\in(0,\lambda_0)$. By fixing a natural number $N\geqslant5$, we can find a smooth conformal metric $g_N=e^{2\varphi_N}g_0$ such that 
 \begin{equation}
 \label{conformalnormalmetric}
 \mbox{det}\big(g_N\big)=1+O(r^N)_{r \searrow 0},
 \end{equation} 
where $r=|x|$ and $x$ are $g_N$-normal coordinates around $p_0$ which is identified as $0$ in this new coordinate system. Now, letting
\[
A=\frac 1 2 {\rm Hess}_{f_0}(p_0).
\]
Since $p_0$ is an isolated maxima of $f_0$, for a suitable constant $L>0$ we have
\[
f_0(x)=(Ax,x)+O(|x|^3)\geqslant-\frac{\lambda}{2}
\]
on $B_{\sqrt{\lambda}/L}(0)$, and therefore $f_\lambda\geqslant\lambda/2$ on $B_{\sqrt{\lambda}/L}(0)$ for all $\lambda\in(0,\lambda_0)$. Fix a cut-off function $\tau\in C_c^\infty([0,\infty))$ with $0\leqslant\tau\leqslant1$ and
\[
 \tau(t)=
\begin{cases}
 1 & \text{ if } 0\leqslant t\leqslant1/2,\\
 0 & \text{ if } t\geqslant1.
 \end{cases}
\]
For any $A_0>1$, we can find a smooth function $\xi\in C^\infty([0,\infty))$ such that $1 \leqslant\xi\leqslant 2$, $\xi^\prime\geqslant 0$, $\sup_{t\geqslant0}\xi^\prime(t)\leqslant A_0$, and
\[
 \xi(t)=
 \begin{cases}
 t & \text{ if } 0\leqslant t\leqslant1,\\
 2 & \text{ if } t\geqslant2.
\end{cases}
\]
 Then we define
\[
z_\lambda(x)=
 \begin{cases}
 \log(1/\lambda) & \text{ if } |x|\leqslant\lambda,\\
 \frac12\log(1/\lambda)~\xi\Big(\frac{2\log|x|}{\log(\lambda)}\Big)\tau(|x|) &\text{ if } \lambda\leqslant|x|\leqslant1.
 \end{cases}
\]
 It is easy to see that $z_\lambda\in C^\infty(B_1(0))$ with ${\rm supp}(z_\lambda) \subset\overline{B_1(0)}$. Finally, we define for 
\[
w_\lambda(x)=
\begin{cases} 
z_\lambda\big(\frac{Lx}{\sqrt{\lambda}}\big)& \text{ if } x\in B_{\sqrt{\lambda}/L}(0),\\
0 & \text{ if } x\in M \setminus B_{\sqrt{\lambda}/L}(0).
\end{cases}
\]
Then, $w_\lambda\in C^\infty(M)$ with ${\rm supp}(w_\lambda) \subset\overline{B_{\sqrt{\lambda}/L}(0)}$. Consider the continuous function $\eta: [0, \infty) \mapsto\mathbb{R}$ defined by
\[
\eta(s)=\int_M f_\lambda e^{4sw_\lambda} \dvg.
\]
It follows from \eqref{rangeoflambda} that $\eta(0)<0$. On the other hand, by the definition of $w_\lambda$ and the fact that $f_\lambda\geqslant\lambda/2$ on $B_{\sqrt{\lambda}/L}(0)$, we conclude that
\begin{align*}
\eta(s)=& \int_Mf_\lambda \dvg + \int_{B_{\sqrt{\lambda}/L}(0)} f_\lambda \big( e^{4sw_\lambda} - 1 \big) \dvg\\
\geqslant &\int_Mf_\lambda \dvg +\frac\lambda2\int_{B_{\sqrt{\lambda}/L}(0)} \big( e^{4sw_\lambda}-1 \big)\dvg .
\end{align*}
This implies that $\eta(s) \to +\infty$ as $s \nearrow +\infty$. Hence, there exists some $s(\lambda)\in(0,+\infty)$ depending on $\lambda$ such that
\[
0=\eta(s)=\int_Mf_\lambda e^{4s(\lambda) w_\lambda} \dvg ,
\]
that is $s(\lambda) w_\lambda\in X_{f_\lambda}$. In addition, we may find a constant $c(\lambda)$ such that
\[
s(\lambda)w_\lambda+c(\lambda)\in X_{f_\lambda}^*.
\] 
Now, we provide a more precise estimate of $s(\lambda)$. Since $\vol(M,g_0)=1$, $\supp (w_\lambda) \subset\overline{B_{\sqrt{\lambda}/L}(0)}$, and $\dv_{g_N} = e^{4\varphi_N}\dvg$ we get that
\begin{align*}
0 =& \int_Mf_\lambda e^{4s(\lambda)w_\lambda} \dvg \\
=& \int_{B_{\sqrt{\lambda}/L}(0)}f_\lambda e^{4s(\lambda)w_\lambda} \dvg +\int_{M\backslash B_{\sqrt{\lambda}/L}(0)}f_\lambda \dvg \\
 \geqslant&\frac\lambda2\int_{B_{\sqrt{\lambda}/L}(0)} e^{4 [s(\lambda)w_\lambda-\varphi_N]} \dv_{g_N}- \| f_0 \| _\infty.
\end{align*}
By \eqref{conformalnormalmetric}, we have $ \dv_{g_N}=\sqrt{1+O(r^N)}dx$. Thus, for any $\epsilon\in(0,1)$, we can find $\lambda_\epsilon\in(0,\lambda_0)$, independent of $s$, such that for any $\lambda\in(0,\lambda_\epsilon)$
\begin{align*}
 \int_{B_{\sqrt{\lambda}/L}(0)} e^{4[s(\lambda) w_\lambda-\varphi_N]} \dv_{g_N} \geqslant&(\min_Me^{-4\varphi_N})\int_{B_{\sqrt{\lambda}/L}(0)} e^{4s(\lambda) w_\lambda}\sqrt{1+O(r^N)}~dx\\
 \geqslant&(\min_Me^{-4\varphi_N})(1-\epsilon)\int_{B_{\sqrt{\lambda}/L}(0)} e^{4s(\lambda) w_\lambda}~dx.
\end{align*}
It follows from the definition of $z_\lambda$ and after substituting $y = Lx/\sqrt \lambda$ that
\begin{align*}
\lambda\int_{B_{\sqrt{\lambda}/L}(0)} e^{4s(\lambda) w_\lambda}~dx=&\frac{\lambda^3}{L^4}\int_{B_1(0)} e^{4s(\lambda) z_\lambda(y)}~dy\\
\geqslant& \frac{\lambda^{3-4s(\lambda)}}{L^4}\int_{B_{\lambda^{5/4}}(0)}~dy=\frac{\pi^2\lambda^{8-4s(\lambda)}}{2L^4}.
\end{align*}
By combining all estimates above, we obtain
\[
\frac{1}{4L^4} (\min_Me^{-4\varphi_N}) (1-\epsilon)\pi^2~\lambda^{8-4s(\lambda)} \leqslant \| f_0 \| _\infty.
\]
Solving the preceding inequality for $s$ gives
\begin{equation}\label{bdofs}
0<s(\lambda) \leqslant2+\frac{1}{4\log(1/\lambda)} \log\Big(\frac{4L^4 \| f_0 \| _\infty\max_Me^{4\varphi_N}}{(1-\epsilon)\pi^2}\Big) 
:=2+O(1/\log(1/\lambda)).
\end{equation}
Next, following the proof of \cite[Lemma 3.6]{Ga}, we obtain that given any $A_0 > 1$, there exists $\lambda^\epsilon\in(0,\lambda_0)$, independent of $A_0$, such that for any $0<\lambda<\lambda^\epsilon$ there holds
\[
\langle \Po w_\lambda, w_\lambda\rangle\leqslant 4\pi^2(1+\epsilon)(A_0^2+1)\log(1/\lambda)+C_0,
\]
where $C_0$ does not depend on neither $\lambda$ nor $\epsilon$. Keep in mind that $s(\lambda) w_\lambda+c(\lambda) \in X_f^*$ with $s(\lambda)$ satisfying \eqref{bdofs}. Hence, we have
\begin{align*}
\beta_\lambda\leqslant & 2\langle \Po (s(\lambda)w_\lambda+c), s(\lambda)w_\lambda+c\rangle\\
=& 2s(\lambda)^2\langle \Po w_\lambda, w_\lambda\rangle\\
\leqslant & 32\pi^2(1+\epsilon)(A_0^2+1)\log(1/\lambda)+O(1).
\end{align*}
This implies that
$$\limsup_{\lambda\searrow0}\frac{\beta_\lambda}{\log(1/\lambda)}\leqslant32\pi^2(1+\epsilon)(A_0^2+1).$$
Letting $\epsilon \searrow 0$ and $A_0 \searrow 1$ gives the assertion.
\end{proof}

\begin{lemma}\label{lambdabetalambdaprime}
 There holds
 $$\liminf_{\lambda\searrow0}(\lambda\alpha_\lambda)\leqslant\liminf_{\lambda\searrow0}|\lambda\beta_\lambda^\prime|\leqslant64\pi^2.$$
\end{lemma}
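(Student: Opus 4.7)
The plan is to prove the two inequalities separately, with the first being a direct consequence of Lemma \ref{monotonicitybeta} and the second obtained by integrating the derivative bound and comparing with the logarithmic upper bound of Lemma \ref{bdofbeta}.

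\textbf{First inequality.} Since $\beta_\lambda$ is non-increasing in $\lambda$ on $(0,\lambda_0)$ by Lemma \ref{monotonicitybeta}, it is differentiable almost everywhere in $\lambda$. At every point of differentiability the derivative coincides with the upper right Dini derivative, so Lemma \ref{monotonicitybeta} gives
\[
\beta_\lambda^\prime = \limsup_{\mu\searrow\lambda}\frac{\beta_\mu-\beta_\lambda}{\mu-\lambda}\leqslant -\alpha_\lambda.
\]
Since $\alpha_\lambda>0$, this yields $|\beta_\lambda^\prime|\geqslant \alpha_\lambda$ at a.e.\ $\lambda$, and hence $\lambda|\beta_\lambda^\prime|\geqslant \lambda\alpha_\lambda$. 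Taking $\liminf$ as $\lambda\searrow0$ along the full-measure set where $\beta'$ exists proves
\[
\liminf_{\lambda\searrow 0}(\lambda\alpha_\lambda)\leqslant \liminf_{\lambda\searrow 0}|\lambda\beta_\lambda^\prime|.
\]

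\textbf{Second inequality.} I argue by contradiction. Suppose $\liminf_{\lambda\searrow 0}|\lambda\beta_\lambda^\prime|>64\pi^2$. Then there exist $\epsilon_0>0$ and $\delta_0\in(0,\lambda_0)$ such that for a.e. $\lambda\in(0,\delta_0)$ one has $|\beta_\lambda^\prime|>(64\pi^2+\epsilon_0)/\lambda$. Since $\beta_\lambda$ is non-increasing, the standard FTC bound for monotone functions gives, for any $0<\lambda_1<\lambda_2<\delta_0$,
\[
\beta_{\lambda_1}-\beta_{\lambda_2}\geqslant \int_{\lambda_1}^{\lambda_2}|\beta_\lambda^\prime|\,d\lambda\geqslant (64\pi^2+\epsilon_0)\int_{\lambda_1}^{\lambda_2}\frac{d\lambda}{\lambda}=(64\pi^2+\epsilon_0)\log(\lambda_2/\lambda_1).
\]
Fixing $\lambda_2$ and letting $\lambda_1\searrow 0$ yields
\[
\liminf_{\lambda_1\searrow 0}\frac{\beta_{\lambda_1}}{\log(1/\lambda_1)}\geqslant 64\pi^2+\epsilon_0,
\]
which contradicts Lemma \ref{bdofbeta}. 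Chaining the two inequalities gives the claim.

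\textbf{Main obstacle.} The whole argument is mostly a monotone-function calculus exercise; the genuine content has already been packaged in Lemmas \ref{monotonicitybeta} and \ref{bdofbeta}. The only subtle point is to handle the a.e.\ defect of the derivative correctly: one must be careful that $\beta_\lambda^\prime$ exists only almost everywhere and that for a non-absolutely-continuous monotone function the fundamental theorem of calculus is only a one-sided inequality $\int|\beta'|\leqslant \beta(a)-\beta(b)$. Fortunately the inequality goes in the direction we need, so the contradiction argument goes through without requiring absolute continuity of $\lambda\mapsto \beta_\lambda$.
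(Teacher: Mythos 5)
Your proof is correct and takes essentially the same approach as the paper: derive $|\beta_\lambda'|\geqslant\alpha_\lambda$ a.e.\ from Lemma \ref{monotonicitybeta}, then for the second bound argue by contradiction using the one-sided FTC inequality $\beta_{\lambda_1}-\beta_{\lambda_2}\geqslant\int_{\lambda_1}^{\lambda_2}|\beta_\lambda'|\,d\lambda$ for the non-increasing function $\beta$ to contradict the logarithmic growth bound in Lemma \ref{bdofbeta}. The paper introduces an intermediate constant $K=32\pi^2+c_0/2$ rather than writing $64\pi^2+\epsilon_0$, but the computation and the way Lebesgue's theorem for monotone functions is invoked are the same.
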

\begin{proof}
Notice that the monotone function $\beta_\lambda$ is differentiable almost everywhere. Then by Lemma \ref{monotonicitybeta} we can easily get that
\[
\liminf_{\lambda\searrow0}(\lambda\alpha_\lambda)\leqslant\liminf_{\lambda\searrow0}|\lambda\beta_\lambda^\prime|.
\] 
So, it remains to show that
\[
\liminf_{\lambda\searrow0}|\lambda\beta_\lambda^\prime|\leqslant64\pi^2.
\] 
Indeed, if we assume that for some $0<\lambda_*<\lambda_0$, some $c_0>64\pi^2$ and almost all $0<\lambda<\lambda_*$ the absolutely continuous part of the differential of $\beta_\lambda$ satisfies
$|\beta^\prime_\lambda|\geqslant c_0/\lambda,$
then for $K=32\pi^2+c_0/2>64\pi^2$ and any sufficiently small $0<\lambda<\lambda_*$ we have, by Lebesgue's theorem, that
\begin{align*}
\beta_{\lambda}-\beta_{\lambda_*}\geqslant & \int_{\lambda}^{\lambda_*}|\beta_\lambda^\prime|~d\lambda \\
\geqslant & K\log(1/\lambda)+(c_0-K)\log(1/\lambda)+c_0\log\lambda_*\\
> & K\log(1/\lambda).
\end{align*}
 This contradicts the bound in Lemma \ref{bdofbeta}.
\end{proof}

With help of this lemma, we can now obtain a bound for the total $Q$-curvatures of the metric $\widetilde{g}_\lambda$ and the normalized metric $g_\lambda$. Recall that $\widetilde{g}_\lambda = e^{2\widetilde u_\lambda} g_0$ with $Q_{\widetilde{g}_\lambda}= f_\lambda$ and $g_\lambda = e^{2 u_\lambda} g_0$ with $Q_{g_\lambda}= \alpha_\lambda f_\lambda$.

\begin{lemma}\label{bdoftotalQcurvature}
 There holds
 $$\liminf_{\lambda\searrow0}\int_M |Q_{g_\lambda}| \dv_{g_\lambda}=\liminf_{\lambda\searrow0}\int_M |Q_{\widetilde{g}_\lambda}| \dv_{\widetilde{g}_\lambda}\leqslant128\pi^2.$$
\end{lemma}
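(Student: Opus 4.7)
The plan is to reduce the two integrals to a single quantity of the form $\alpha_\lambda \int_M |f_\lambda| e^{4u_\lambda} \dvg$ and then control this quantity by $2\lambda\alpha_\lambda$, after which Lemma \ref{lambdabetalambdaprime} finishes the argument.

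First I would verify the equality of the two total curvatures. Since $Q_{g_\lambda} = \alpha_\lambda f_\lambda$ and $\dv_{g_\lambda} = e^{4u_\lambda} \dvg$, we have
\[
\int_M |Q_{g_\lambda}| \dv_{g_\lambda} = \alpha_\lambda \int_M |f_\lambda| e^{4u_\lambda} \dvg.
\]
Similarly, since $\widetilde u_\lambda = u_\lambda + c_\lambda$ with $\alpha_\lambda = e^{4c_\lambda}$ and $Q_{\widetilde g_\lambda} = f_\lambda$,
\[
\int_M |Q_{\widetilde g_\lambda}| \dv_{\widetilde g_\lambda} = \int_M |f_\lambda| e^{4(u_\lambda + c_\lambda)} \dvg = \alpha_\lambda \int_M |f_\lambda| e^{4u_\lambda} \dvg.
\]

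Next I would exploit the constraint $u_\lambda \in X^*_{f_\lambda}$, i.e. $\int_M f_\lambda e^{4u_\lambda}\dvg = 0$ and $\int_M e^{4u_\lambda}\dvg = 1$. Writing $f_\lambda = f_\lambda^+ - f_\lambda^-$ with $f_\lambda^\pm \geqslant 0$, the vanishing-integral constraint gives $\int_M f_\lambda^+ e^{4u_\lambda}\dvg = \int_M f_\lambda^- e^{4u_\lambda}\dvg$, hence
\[
\int_M |f_\lambda| e^{4u_\lambda}\dvg = 2\int_M f_\lambda^+ e^{4u_\lambda}\dvg.
\]
Since $f_0 \leqslant 0$, we have the pointwise bound $f_\lambda^+ = \max(f_0+\lambda,0) \leqslant \lambda$, so combining with the normalization $\int_M e^{4u_\lambda}\dvg = 1$ yields
\[
\int_M |f_\lambda| e^{4u_\lambda}\dvg \leqslant 2\lambda.
\]

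Combining these two steps gives $\int_M |Q_{g_\lambda}|\dv_{g_\lambda} \leqslant 2\lambda \alpha_\lambda$, and taking $\liminf_{\lambda\searrow 0}$ together with Lemma \ref{lambdabetalambdaprime} (which provides $\liminf_{\lambda\searrow 0}(\lambda \alpha_\lambda) \leqslant 64\pi^2$) produces the desired bound $\liminf_{\lambda \searrow 0}\int_M |Q_{g_\lambda}|\dv_{g_\lambda} \leqslant 128\pi^2$. There is no real obstacle here: the argument is essentially bookkeeping, with the only subtlety being that one must use both constraints defining $X^*_{f_\lambda}$ — the orthogonality to $f_\lambda$ (to replace $|f_\lambda|$ by $2f_\lambda^+$) and the unit-volume normalization (to absorb the remaining exponential factor).
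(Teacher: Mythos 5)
Your proof is correct and follows essentially the same route as the paper's: both establish the equality of the two total curvatures via $Q_{g_\lambda}=e^{4c_\lambda}Q_{\widetilde g_\lambda}$ and $\dv_{g_\lambda}=e^{-4c_\lambda}\dv_{\widetilde g_\lambda}$, then reduce the bound to $2\lambda\alpha_\lambda$ and invoke Lemma \ref{lambdabetalambdaprime}. Your decomposition $|f_\lambda|=f_\lambda^++f_\lambda^-$ together with the orthogonality constraint is algebraically equivalent to the paper's pointwise inequality $|f_\lambda|\leqslant -f_\lambda+2\lambda$ integrated against $\dv_{\widetilde g_\lambda}$, so the two arguments are the same in substance.
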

\begin{proof}
 Notice that we can estimate
 \begin{equation}
 \label{bdoff_lambda}
 |Q_{\widetilde{g}_\lambda}|=|f_0 + \lambda |\leqslant-f_0+\lambda=-f_\lambda+2\lambda.
 \end{equation}
Keep in mind that $\alpha_\lambda = \vol(M, \widetilde g_\lambda)$ and that $\int_M f_\lambda \dv_{\widetilde{g}_\lambda} = 0$. Then by \eqref{alphalambda}, Lemma \ref{lambdabetalambdaprime}, and the fact that $u_\lambda\in X_{f_\lambda}^*$, we get that
\begin{align*}
\liminf_{\lambda\searrow0}\int_M|Q_{\widetilde{g}_\lambda}| \dv_{\widetilde{g}_\lambda}\leqslant &\liminf_{\lambda\searrow0}\Big[\int_M (-f_\lambda) \dv_{\widetilde{g}_\lambda}+2\lambda\alpha_\lambda\Big]\\
= & 2\liminf_{\lambda\searrow0}(\lambda\alpha_\lambda)\leqslant128\pi^2.
\end{align*}
Since $|Q_{g_\lambda}| =e^{4c_\lambda} |Q_{\widetilde g_\lambda}|$ and $\dv_{g_\lambda} = e^{4u_\lambda} \dvg = e^{-4c_\lambda}\dv_{\widetilde g_\lambda}$, we deduce that
\[
\int_M |Q_{g_\lambda}| \dv_{g_\lambda}=\int_M |Q_{\widetilde{g}_\lambda}| \dv_{\widetilde{g}_\lambda},
\] 
we thus complete the proof.
\end{proof}

\subsection{Concentration of curvature}

In the following, we consider the prescribed $Q$-curvature equation with an error term. To be precise, for a suitable sequence $\lambda_k\searrow0$ and suitable $\alpha_k > 0$ we let functions $w_k\in X_{f_{\lambda_k}}^*$ with corresponding metrics $g_k=e^{2w_k}g_0$ solve
\begin{equation} \label{pQeerror}
 \Po w_k= \alpha_kf_{\lambda_k}e^{4w_k}+h_ke^{4w_k}
\end{equation}
with $Q_{g_k} = \alpha_kf_{\lambda_k} + h_k$. Then
\[
\int_MQ_{g_k}e^{4w_k} \dvg =0.
\] 
In view of Lemma \ref{lambdabetalambdaprime}, we further assume that $\alpha_k$ satisfies 
\begin{equation} \label{Bound4LambdaAlpha}
\limsup_{k \to +\infty}(\lambda_k\alpha_k)\leqslant64\pi^2.
\end{equation}
Moreover, we let functions $h_k$ on $M$ be such that
\begin{equation} \label{pHkerror}
 \| h_k \| _{L^2(M,g_k)}=:\epsilon_k \to 0
\end{equation}
as $k \to +\infty$. Denote 
\begin{equation}
 \label{doublebarhk}
 \doverline{h}_k=\int_Mh_ke^{4w_k}~\dvg.
\end{equation}
Then the assumption \eqref{pHkerror} implies that
\begin{equation}
 \label{hkgozero}
 |\doverline{h}_k|\leqslant\epsilon_k\quad\mbox{and}\quad \|h_k-\doverline{h}_k\|_{L^1(M,g_k)}\leqslant2\epsilon_k.
\end{equation}

With all these assumptions, we then have the same conclusion as Lemma \ref{bdoftotalQcurvature}. To see this, we set
\[
s^{\pm}=\pm\max\{\pm s,0\}
\] 
for any $s\in\mathbb{R}$. Upon writing $|Q_{g_k}|=-Q_{g_k}+2Q^+_{g_k}$, estimating $Q^+_{g_k}\leqslant\alpha_k\lambda_k+|h_k|$, and integrating \eqref{pQeerror} we obtain, by H\"older's inequality and the assumption \eqref{pHkerror}, that
\begin{equation} \label{bdoftotalQcurvatureerror}
\begin{aligned}
\limsup_{k \to +\infty}\int_M|Q_{g_k}| \dv_{g_k}= &2\limsup_{k \to +\infty}\int_MQ^+_{g_k}e^{4w_k} \dvg \\
 \leqslant &2\limsup_{k \to +\infty}\big(\alpha_k\lambda_k+ \| h_k \| _{L^2(M,g_k)}\big)\leqslant128\pi^2.
\end{aligned}
\end{equation}
It is worth emphasizing that by allowing the ``error term'' $h_k$ in the perturbed equation \eqref{pQeerror}, we will also be able to apply Theorems \ref{main3} and \ref{main4} below in the flow context, where $w_k=u(t_k)$ for a solution $u=u(t)$ to \eqref{eeforu} and $h_t=u_t(t_k)$ for a sequence of times $t_k \to +\infty$. On the other hand, by choosing $w_k=u_k\in X^*_{f_{\lambda_k}}$, satisfying \eqref{pQeerror} with $h_k=0$ for all $k\in\mathbb{N}$, Theorems \ref{main1} and \ref{main} will become the special cases of Theorems \ref{main3} and \ref{main4} below respectively. 

It is worth noting that we are not interested in the existence of solutions to \eqref{pQeerror} in $X_{f_{\lambda_k}}^*$ under the conditions \eqref{Bound4LambdaAlpha} and \eqref{pHkerror}. What we are interested in is the concentration behavior of any sequence of solutions to \eqref{pQeerror} in $X_{f_{\lambda_k}}^*$, if exists. To be more precise, we prove the following concentration result.

\begin{lemma}\label{concentration0}
Given $(w_k)$ a sequence of solutions to \eqref{pQeerror} as above we have $\alpha_k\rightarrow+\infty$ as $k\rightarrow+\infty$. Moreover, there exist a suitable positive integer $I$ with $I\leqslant 8$ and finitely many points $x_\infty^{(i)} \in M$ with $1 \leqslant i \leqslant I$ such that, for any $r>0$ and each $1 \leqslant i \leqslant I$, there hold
\begin{equation}\label{ConcentrationPointIsMax}
f_0(x_\infty^{(i)})=0
\end{equation}
and
\begin{equation}\label{concentration}
 \liminf_{k \to +\infty}\int_{B_r(x_\infty^{(i)})}Q_{g_k}^+ \dv_{g_k}\geqslant8\pi^2.
\end{equation}
\end{lemma}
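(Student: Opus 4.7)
The plan is to combine the total curvature bound from Subsection~3.1 with the concentration--compactness dichotomy of Proposition~\ref{Mal06Proposition3.1}, then use the sign structure of $f_0$ and the constraints defining $X^*_{f_{\lambda_k}}$ to pin down the locations and count the concentration points. First, to establish $\alpha_k \to +\infty$: in the context where the lemma is applied (minimizers in the static case, or flow outputs with $\mathscr{E}(w_k) \geqslant \beta_{\lambda_k}$), one has $\mathscr{E}(w_k) \to +\infty$ by Lemma~\ref{unbdofbeta}. Testing \eqref{pQeerror} with $w_k$ and using $\int_M f_{\lambda_k} e^{4w_k}\dvg = 0$ together with \eqref{hkgozero} to center first--moment terms, one obtains
\[
\tfrac12\mathscr{E}(w_k) = \alpha_k \int_M f_{\lambda_k}(w_k - \doverline{w}_k) e^{4w_k}\dvg + \int_M h_k(w_k - \doverline{w}_k)e^{4w_k}\dvg,
\]
where $\doverline{w}_k := \int_M w_k e^{4w_k}\dvg$. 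Suitable Cauchy--Schwarz and Adams'-type manipulations using \eqref{TrudingerInequality} and \eqref{pHkerror} yield $\mathscr{E}(w_k) \leqslant C(\alpha_k + o(1))\sqrt{\mathscr{E}(w_k)}$; since $\mathscr{E}(w_k) \to +\infty$, this forces $\alpha_k \to +\infty$.

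Next, by \eqref{bdoftotalQcurvatureerror} the sequence $\varphi_k := \Po w_k$ is uniformly bounded in $L^1(M, g_0)$, so Proposition~\ref{Mal06Proposition3.1} applies to $(w_k, \varphi_k)$. Alternative~(i) (namely $\int_M e^{4s(w_k - \bar w_k)}\dvg \leqslant C$ for some $s > 1$) combined with the volume constraint $\int_M e^{4w_k}\dvg = 1$ and Lemma~\ref{Mal06Lemma2.3} would bound $w_k$ in a strong enough norm to contradict the divergence of $\mathscr{E}(w_k)$; hence alternative~(ii) holds and produces finitely many points $x_\infty^{(1)}, \dots, x_\infty^{(L)} \in M$ with $\liminf_k \int_{B_r(x_\infty^{(i)})}|\varphi_k|\dvg \geqslant 8\pi^2$ for every $r > 0$. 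Rewriting $\int_M f_{\lambda_k} e^{4w_k}\dvg = 0$ as $\int_M |f_0|e^{4w_k}\dvg = \lambda_k \to 0$ forces the probability measures $e^{4w_k}\dvg$ to concentrate on $M_0 = \{f_0 = 0\}$; since each $x_\infty^{(i)}$ must sit in the support of the concentration, this gives \eqref{ConcentrationPointIsMax}.

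Finally, for the positive-part bound \eqref{concentration} and the count $I \leqslant 8$: off any fixed neighborhood $U$ of $M_0$, $f_{\lambda_k}$ is uniformly negative for large $k$, so $Q_{g_k}^+ \leqslant |h_k|$ on $M\setminus U$ and $\int_{M \setminus U} Q_{g_k}^+\dv_{g_k} \to 0$, and the positive-part measure $Q_{g_k}^+\dv_{g_k}$ localizes entirely on $M_0$. Using the Green's representation $w_k = \bar w_k + \int_M \mathbb{G}(\cdot, y)\varphi_k(y)\dvg$ together with property~(P2) of $\mathbb{G}$, the upward blow-up of $w_k$ at each $x_\infty^{(i)}$ can only be driven by positive mass of $\varphi_k$ (a local negative-mass concentration would push $w_k$ downward rather than cause blow-up), so a local Br\'ezis--Merle-type argument transfers the $8\pi^2$-threshold from $|\varphi_k|$ to $\varphi_k^+ = Q_{g_k}^+ e^{4w_k}$, yielding \eqref{concentration}. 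The bound $I \leqslant 8$ then follows from $\int_M Q_{g_k}^+\dv_{g_k} \leqslant \alpha_k \int_M f_{\lambda_k}^+ e^{4w_k}\dvg + \epsilon_k \leqslant \alpha_k\lambda_k + \epsilon_k \leqslant 64\pi^2 + o(1)$ by \eqref{Bound4LambdaAlpha}. I expect the main obstacle to be the rigorous upgrade of the $8\pi^2$-bound from $|\varphi_k|$ to $\varphi_k^+$: heuristically clean, but requiring careful exclusion of local cancellations between positive and negative parts of $\varphi_k$ near each $x_\infty^{(i)}$, most likely through a truncation/decomposition argument and a sharpened local Br\'ezis--Merle estimate adapted to the Paneitz operator.
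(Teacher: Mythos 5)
Your plan captures the right ingredients — the total-curvature bound, a Br\'ezis--Merle-type concentration argument, and a count of concentration points against the $64\pi^2$ budget — but the route you take diverges substantially from the paper's, and two steps that you treat as routine (or flag as the ``main obstacle'') are where the paper's actual work lies, and where your argument has real gaps.

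First, $\alpha_k\to+\infty$. You propose the energy estimate $\mathscr{E}(w_k)\leqslant C(\alpha_k+o(1))\sqrt{\mathscr{E}(w_k)}$ obtained by testing the equation against $w_k$ and using Cauchy--Schwarz and Adams. This bound is not established and is quite unlikely to hold in this form: to get $\sqrt{\mathscr{E}(w_k)}$ on the right you would need something like $\big(\int_M(w_k-\doverline{w}_k)^2e^{4w_k}\dvg\big)^{1/2}\lesssim\sqrt{\mathscr{E}(w_k)}$, but Adams' inequality \eqref{TrudingerInequality} only gives exponential control of $\int_M e^{\alpha(w_k-\overline{w}_k)}\dvg$ in terms of $\mathscr{E}(w_k)$, so the second moment of $w_k$ under $e^{4w_k}\dvg$ could be of size $e^{c\,\mathscr{E}(w_k)}$, not $\sqrt{\mathscr{E}(w_k)}$. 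The paper avoids this by first proving a \emph{pointwise} upper bound on $w_k$: under the hypothesis that $\int_{B_r}Q_{g_k}^+\dv_{g_k}<8\pi^2$ everywhere (which holds automatically if $\alpha_k$ stays bounded, since then $Q_{g_k}^+\leqslant\alpha_k\lambda_k+|h_k|\to0$ in $L^1$), it splits $w_k=w_k^{(+)}+w_k^{(-)}+d_k$ with $\Po w_k^{(\pm)}=Q_{g_k}^{\pm}e^{4w_k}\mp\mu_k$, uses the Green's representation, the weighted Jensen inequality of Brezis--Merle, and elliptic theory to bound $v_k^{(+)}$ uniformly in $C^{0,\alpha}$; this gives $w_k\leqslant v_k^{(+)}\leqslant C$ pointwise, and then the elementary inequality $se^{s}\geqslant-1$ for $s\leqslant0$ makes $\alpha_k f_{\lambda_k}e^{4w_k}w_k$ and $e^{2w_k}w_k$ bounded, yielding $\beta_{\lambda_k}\leqslant\mathscr{E}(w_k)\leqslant C$ and contradicting Lemma~\ref{unbdofbeta}. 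Your argument never produces a pointwise bound and so cannot close this loop.

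Second, the $8\pi^2$ concentration of $Q_{g_k}^+\dv_{g_k}$. You invoke Proposition~\ref{Mal06Proposition3.1} for $\varphi_k=\Po w_k$, which would give concentration of $|\varphi_k|=|Q_{g_k}|e^{4w_k}$, and then acknowledge that upgrading from $|\varphi_k|$ to $\varphi_k^+$ is ``the main obstacle,'' offering only the heuristic that upward blow-up of $w_k$ must be driven by positive mass. That heuristic is not substantiated: Proposition~\ref{Mal06Proposition3.1}(ii) tells you $|\varphi_k|$ carries at least $8\pi^2$ of mass near each $x_\infty^{(i)}$ but does not tell you that $w_k$ blows up (upward or at all) there, and the off-$M_0$ vanishing of $Q_{g_k}^+$ that you note does not preclude negative mass from concentrating at a point of $M_0$. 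The paper sidesteps this issue entirely: it never calls Proposition~\ref{Mal06Proposition3.1} in this lemma. Instead, it assumes for contradiction that $\int_{B_{r_x}(x)}Q_{g_k}^+\dv_{g_k}\leqslant8\pi^2-\delta_x$ around \emph{every} $x\in M$, and the same splitting $v_k^{(+)}+v_k^{(-)}$ with an iterated H\"older estimate on $\int_M(\alpha_k|f_{\lambda_k}|)^{2^{-m}}e^{2^{2-m}v_k-2\gamma v_k^{(-)}}\dvg$ then bounds $\alpha_k$, contradicting $\alpha_k\to+\infty$; hence the contradiction hypothesis must fail at finitely many points directly for the \emph{positive part}. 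This way \eqref{concentration} is proved with no sign-transfer step at all.

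Third, \eqref{ConcentrationPointIsMax}. Your argument (``volume concentrates on $M_0$, and each $x_\infty^{(i)}$ must lie in the support of the concentration'') is imprecise: the concentration produced by Proposition~\ref{Mal06Proposition3.1} is of the curvature measure $|\varphi_k|\dvg$, not of the volume measure $e^{4w_k}\dvg$, and one would need further work to link the two. The paper's argument is cleaner and goes through \eqref{concentration} rather than the volume: if $f_0(x_\infty^{(i)})<0$, then $f_{\lambda_k}\leqslant-\epsilon/2$ near $x_\infty^{(i)}$ for large $k$, so $Q_{g_k}^+\leqslant(\alpha_k f_{\lambda_k})^++|h_k|=|h_k|$ there, hence $\int_{B_r(x_\infty^{(i)})}Q_{g_k}^+\dv_{g_k}\leqslant\|h_k\|_{L^2(M,g_k)}^2\to0$, contradicting \eqref{concentration} which you have (or should have) already proved. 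Your count $I\leqslant 8$ from $\int_M Q_{g_k}^+\dv_{g_k}\leqslant\alpha_k\lambda_k+\epsilon_k\leqslant64\pi^2+o(1)$ matches the paper.

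In summary: the right ideas are present, but the proposal does not supply the two engines that the paper builds — the Green's-function/Brezis--Merle splitting that yields a pointwise upper bound for $w_k$ (needed for $\alpha_k\to+\infty$), and the direct contradiction argument on $Q_{g_k}^+$ (which makes the sign-transfer problem you flag simply not arise). As written, your argument has genuine gaps at both of these points.
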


\begin{proof}
Our proof consists of two parts.

\medskip
\noindent\textbf{PART 1}. We prove \eqref{concentration} for $1 \leqslant i \leqslant I$ and $\alpha_k\rightarrow+\infty$ as $k\rightarrow+\infty$.

By way of contradiction, we assume that for every $x\in M$ there exists some $r_x>0$ such that 
 \begin{equation} \label{ubdofQ+}
 \int_{B_{r_x}(x)}Q_{g_k}^+ \dv_{g_k}\leqslant8\pi^2-\delta_x,
 \end{equation}
for some $\delta_x>0$ and for $k$ large enough. Since the proof presented here is rather long, we split it into several steps for clarity. 

\medskip
\noindent\textbf{Step 1}. In this step, from the contradiction assumption \eqref{ubdofQ+}, we shall establish the key estimate \eqref{bdofintvk+} below. Since $M$ is compact, we can cover $M$ by $N$ balls $B_i=B_{r_{x^i}/2}(x^i)$ with $1\leqslant i\leqslant N$. By the property (P2) of Green's function, we conclude that $\mathbb G(x,y)>0$ for any $x\in M$, $y\in B_{r_x}(x)$ with $r_x$ suitably small. So, in the following, we choose the radius $r_{x^i}$ small enough such that $\mathbb G(x,y)>0$ for any $x, y\in \widetilde{B}_i=B_{r_{x^i}}(x^i)$. Let
\[
\mu_k=\int_MQ^+_ke^{4w_k} \dvg .
\] 
Then the fact that $\int_MQ_{g_k}e^{4w_k} \dvg =0$ implies that $\int_MQ^-_ke^{4w_k} \dvg =-\mu_k$. Moreover, by \eqref{bdoftotalQcurvatureerror} we have
\[
0\leqslant\mu_k\leqslant128\pi^2+o(1).
\]
Now we let $w_k^{(\pm)}$ solve the equations
\begin{equation} \label{eqforuk+-}
 \Po w_k^{(\pm)}=Q_{g_k}^{\pm}e^{4w_k}\mp\mu_k
\end{equation}
on $M$. Since $\mbox{ker} \Po =\{\mbox{constants}\}$ and $ \Po w_k=Q_{g_k}e^{4w_k}$, we can decompose $w_k$ as the following
\begin{equation}
 \label{decomposeofuk}
 w_k=w_k^{(+)}+w_k^{(-)}+d_k,
\end{equation}
where $d_k$ is some constant. Integrating \eqref{decomposeofuk} with respect to the metric $g_0$ yields
\begin{equation}
 \label{dk}
 d_k=\overline w_k-\overline{w_k^{(+)}}-\overline{w_k^{(-)}}.
\end{equation}
Recall that $M$ is covered by all balls $B_i$ with $1\leqslant i\leqslant N$. Hence, for each $x\in M$ we can find some $1\leqslant i\leqslant N$ such that $x\in B_i$. By applying the formula \eqref{green} to the equations \eqref{eqforuk+-} we obtain
\begin{equation} \label{greenforuk+-}
\begin{aligned}
w_k^{(\pm)}(x)=&\overline{w_k^{(\pm)}}+\int_M\mathbb G(x,y) \big(Q_{g_k}^{\pm}e^{4w_k}\mp\mu_k\big)(y) \dvg (y) \\
=&\overline{w_k^{(\pm)}}+\int_{\widetilde{B}_i}\mathbb G(x,y)Q_{g_k}^{\pm} \dv_{g_k}(y)\mp\mu_k\int_M\mathbb G(x,y) \dvg (y) \\
&+\int_{M\backslash\widetilde{B}_i}\mathbb G(x,y)Q_{g_k}^{\pm} \dv_{g_k}(y).
\end{aligned}
\end{equation}
It follows from the property (P1) of Green's function and \eqref{bdoftotalQcurvatureerror} that there exists positive constants $C_i$ independent of $k$ such that
\begin{equation} \label{bdofGreenGreen}
\Big|\mu_k\int_M\mathbb G(x,y) \dvg (y)\Big|+\Big|\int_{M\backslash\widetilde{B}_i}\mathbb G(x,y)Q_{g_k}^{\pm} \dv_{g_k}(y)\Big|\leqslant C_i.
\end{equation}
If we set 
\[
c_*=\max \big\{ C_i : 1 \leqslant i \leqslant N \big \},
\] 
then the positivity of $G$ on each $\widetilde{B}_i$ and \eqref{greenforuk+-} imply that for any $x\in M$, we have
\begin{equation} \label{bdofuk+-}
\left\{
 \begin{aligned}
 w_k^{(+)}(x)&\geqslant \overline{w_k^{(+)}}-c_*,\\[0.5em]
 w_k^{(-)}(x)& \leqslant \overline{w_k^{(-)}}+c_*.
 \end{aligned}
\right.
\end{equation}
Now, we define
\begin{equation}\label{vkandvk+-}
\left\{
 \begin{aligned}
v_k&=w_k-\overline w_k, \\
v_k^{(+)}&=w_k^{(+)}- \overline{w_k^{(+)}}+c_*, \\
v_k^{(-)}&=w_k^{(-)}- \overline{w_k^{(-)}}-c_*.
 \end{aligned}
\right.
\end{equation}
Then the relations in \eqref{bdofuk+-} imply that
\begin{equation*}
v_k^{(+)}\geqslant 0 \geqslant v_k^{(-)}.
\end{equation*}
Furthermore, it follows from \eqref{decomposeofuk} and \eqref{dk} that
\[
w_k-\overline w_k=w_k^{(+)}+w_k^{(-)}-\overline{w_k^{(+)}}-\overline{w_k^{(-)}},
\] 
which then yields
\[
v_k=v_k^{(+)}+v_k^{(-)}.
\]
In view of \eqref{ubdofQ+}, we may choose some real number $s_0>4$ so that
\begin{equation}
 \label{ubdofQ+1}
 s_0\int_{\widetilde{B}_i}Q_{g_k}^+ \dv_{g_k}<32\pi^2
\end{equation}
for any $1\leqslant i\leqslant N$. By using \eqref{greenforuk+-} and \eqref{bdofGreenGreen}, we can bound
\begin{align*}
v_k^{(+)}-c_* =& w_k^{(+)}- \overline{w_k^{(+)}} \leqslant c_* + \int_{\widetilde{B}_i}\mathbb G(x,y)Q_{g_k}^{+} \dv_{g_k}(y),
\end{align*}
which then gives
\begin{align*}
\exp\big[s_0 v_k^{(+)} \big] & \leqslant e^{2s_0c_*}\exp\Big[s_0\int_{\widetilde B_i}\mathbb G(x,y)Q_{g_k}^{+} \dv_{g_k}(y)\Big] \\
& \leqslant e^{2s_0c_*}\exp\Big[s_0\int_M \| Q_{g_k}^+\chi_{\widetilde{B}_i} \| _{L^1(M,g_k)} |\mathbb G(x,y)| \frac{Q_{g_k}^{+} \chi_{\widetilde B_i} (y)}{\| Q_{g_k}^+\chi_{\widetilde{B}_i} \| _{L^1(M,g_k)}} \dv_{g_k}(y)\Big] \\
& \leqslant e^{2s_0c_*}\int_M \exp\Big[s_0 \| Q_{g_k}^+\chi_{\widetilde{B}_i} \| _{L^1(M,g_k)} |\mathbb G(x,y)| \Big] \frac{Q_{g_k}^{+} \chi_{\widetilde B_i} (y)}{\| Q_{g_k}^+\chi_{\widetilde{B}_i} \| _{L^1(M,g_k)}} \dv_{g_k}(y),
\end{align*}
thanks to the `weighted' Jensen inequality; see \cite[page 1227]{BM}. By integrating the inequality above and using Fubini's theorem, we obtain
\begin{align*}
\int_{B_i}e^{s_0v_k^{(+)}} \dvg (x)
&\leqslant e^{2s_0c_*}\int_M \Big( \int_{B_i}\exp\Big[s_0 \| Q_{g_k}^+\chi_{\widetilde{B}_i} \| _{L^1(M,g_k)}|\mathbb G(x,y)|\Big] \dvg (x) \Big) \\
& \times\frac{Q_{g_k}^+\chi_{\widetilde{B}_i}(y)}{ \| Q_{g_k}^+\chi_{\widetilde{B}_i} \| _{L^1(M,g_k)}} \dv_{g_k}(y) \\
&\leqslant e^{2s_0c_*}\sup_{y\in M}\int_M\exp\Big[s_0 \| Q_{g_k}^+\chi_{\widetilde{B}_i} \| _{L^1(M,g_k)}|\mathbb G(x,y)|\Big] \dvg (x).
\end{align*}
By the property (P2), we know that $|\mathbb G(x,y)| \leqslant (1/(8\pi^2))\log(1/d(x,y)) + \Cscr_{\mathbb G}$ for any $x \ne y$, which implies that
\[
s_0 \| Q_{g_k}^+\chi_{\widetilde{B}_i} \| _{L^1(M,g_k)}|\mathbb G(x,y)| 
\leqslant \frac {s_0}{8\pi^2} \| Q_{g_k}^+\chi_{\widetilde{B}_i} \| _{L^1(M,g_k)} \log \frac 1{d(x,y)}
+ 8\pi^2 \Cscr_{\mathbb G} s_0 
\]
for any $x \ne y$. From this we can estimate
\begin{align*}
\int_M\exp\Big[s_0 \| Q_{g_k}^+ & \chi_{\widetilde{B}_i} \| _{L^1(M,g_k)}|\mathbb G(x,y)|\Big] \dvg (x) \\
& \leqslant e^{8\pi^2 \Cscr_{\mathbb G} s_0} \int_M\big( d(x,y) \big)^{- (s_0 /8\pi^2) \| Q_{g_k}^+\chi_{\widetilde{B}_i} \| _{L^1(M,g_k)}} \dvg (x).
\end{align*}
The last integral in the preceding inequality is uniformly bounded because
\[
(s_0 /8\pi^2) \| Q_{g_k}^+\chi_{\widetilde{B}_i} \| _{L^1(M,g_k)} <4,
\]
thanks to \eqref{ubdofQ+1}. So we have shown that
\[
\int_{B_i}e^{s_0v_k^{(+)}} \dv_{g_k}<+\infty
\]
for $1\leqslant i\leqslant N$. Since $M$ is covered by finitely many $B_i$'s, we conclude that
\begin{equation}\label{bdofintvk+}
 \int_Me^{s_0v_k^{(+)}} \dv_{g_k}<+\infty.
\end{equation}
This completes the first step.

\medskip
\noindent\textbf{Step 2}. In this step, we claim from the key estimate \eqref{bdofintvk+} that $v_k^{(+)}$, defined in \eqref{vkandvk+-}, is uniformly bounded. To see this, we let $p=2s_0/(s_0+4)$. Then it follows from $s_0>4$ that 
\begin{equation}
 \label{propertyofp}
 1<p<2, \quad 4p<s_0.
\end{equation}
With this real number $p$, Minkowski's inequality, and \eqref{eqforuk+-}, we can estimate
\begin{align*}
\big\| \Po v_k^{(+)}\big\|_{L^p(M, g_0)}&= \big\| \Po w_k^{(+)}\big\|_{L^p(M, g_0)}\\
&=\big\|Q_{g_k}^+e^{4w_k}\mp\mu_k\big\|_{L^p(M, g_0)}\\
 &\leqslant \alpha_k\lambda_k\big\|e^{4w_k}\big\|_{L^p(M, g_0)}+\big\|h_ke^{4w_k}\big\|_{L^p(M, g_0)}+\mu_k\\
 & = I+II+128\pi^2+o(1)_{k \nearrow + \infty}.
\end{align*}

\smallskip
\noindent\underline{Estimate of $I$}: By Jensen's inequality and the fact $w_k \in X_{f_{\lambda_k}}^*$, we know that
\[
\overline w_k = \int_M w_k \dvg \leqslant \frac 14 \log \Big( \int_M e^{4w_k} \dvg \Big) = 0.
\] 
Hence, $w_k=v_k+\overline w_k\leqslant v_k\leqslant v_k^{(+)}$. This together with the fact that $\alpha_k\lambda_k\leqslant 64\pi^2+o(1)_{k \nearrow + \infty}$, \eqref{bdofintvk+} , \eqref{propertyofp}, and H\"older's inequality implies that
\begin{align*}
 I\leqslant65\pi^2\big\|e^{4v_k^{(+)}}\big\|_{L^p(M, g_0)}\leqslant65\pi^2\big\|e^{s_0v_k^{(+)}}\big\|_{L^1(M, g_0)}^{4/s_0}.
\end{align*}
Thus $I=O(1)_{k \nearrow + \infty}$.

\medskip
\noindent\underline{Estimate of $II$}: To estimate this term, we make use of H\"older's inequality and the facts that $1<p<2$ and $s_0=4p/(2-p)$ to get
\begin{align*}
 II& = \| h_ke^{4w_k} \| _{L^p(M,g_0)}=\bigg(\int_M|h_k|^pe^{2pw_k}e^{2pw_k} \dvg \bigg)^{1/p}\\
 &\leqslant\|h_k\|_{L^2(M,g_k)}\bigg(\int_Me^{s_0w_k} \dvg \bigg)^{1/p-1/2}.
\end{align*}
By \eqref{bdofintvk+} and \eqref{pHkerror}, we deduce that
\[
II = o(1)_{k \nearrow +\infty}.
\]
Combining the estimates of $I$ and $II$ gives 
\[
 \int_M| \Po v_k^{(+)}|^p \dvg <+\infty.
\]
In addition, it follows from \eqref{bdofintvk+} that
\[
v_k^{(+)}\in L^q(M,g_0)
\]
for any $q \geqslant 1$. Thus, by standard elliptic theory, we have shown that $v_k^{(+)}$ is bounded in $W^{4,p}(M,g_0)$ for some $p>1$. Again by Sobolev's embedding, we conclude that $v_k^{(+)}$ is bounded in $C^{0,\alpha}(M,g_0)$ for some $\alpha\in[0, 4-4/p]$. The claim is proved. 

\medskip
\noindent\textbf{Step 3}. In this step, we show that the sequence $(\alpha_k)$ is unbounded. Indeed, suppose that $(\alpha_k)$ is bounded, namely, $\alpha_k = O(1)_{k \to +\infty}$. Mimicking the argument used in \eqref{bdoftotalQcurvatureerror} to get
\[
0 \leqslant \int_MQ^+_{g_k}e^{4w_k} \dvg 
 \leqslant \alpha_k\lambda_k+ \| h_k \| _{L^2(M,g_k)} = o(1)_{ k \nearrow +\infty},
\]
which tells us that \eqref{ubdofQ+} holds at any point in $M$. From this, we repeat the arguments in Steps 1 and 2 to realize that $(v_k^{(+)})$ is uniformly bounded. It is now possible to bound $w_k$ uniformly from above as follows
\[
w_k=v_k+\overline w_k\leqslant v_k\leqslant v_k^{(+)}\leqslant C.
\] 
In view of the estimate $se^s\geqslant-1$ for $s\leqslant0$ we find that
\[
\alpha_kf_{\lambda_k}e^{4w_k}w_k\leqslant C\quad\mbox{and}\quad|e^{2w_k}w_k|\leqslant C.
\]
uniformly in $M$. But then by multiplying \eqref{pQeerror} with $w_k$ we obtain the bound
\begin{align*}
\beta_{\lambda_k}&\leqslant 2\langle \Po w_k, w_k\rangle \\
&\leqslant2\int_M\alpha_kf_{\lambda_k}e^{4w_k}w_k \dvg +2\int_Mh_ke^{4w_k}w_k \dvg \\
&\leqslant C+ 2\| h_k \| _{L^2(M,g_k)} \|e^{2w_k}w_k \| _{L^2(M,g_0)}\leqslant C,
\end{align*}
provided $k$ is large enough, which contradicts Lemma \ref{unbdofbeta}. Thus, $(\alpha_k)$ is unbounded as claimed.

\medskip
\noindent\textbf{Step 4}. Now, we are in position to obtain a contradiction and show that there are finitely many points $x_\infty^{(i)} \in M$ with $1 \leqslant i \leqslant I$ such that \eqref{concentration} holds. Keep in mind that $\alpha_k \nearrow +\infty$ as $k \to +\infty$. Depending on the size of $\overline w_k$, there are two possible cases as follows.

\noindent\underline{Case 1}. Suppose that $\overline w_k \to -\infty$ as $k \to +\infty$. By the uniform boundedness of $v_k^{(+)}$ established before, we have
\[
w_k=v_k+\overline w_k\leqslant v_k^{(+)}+\overline w_k\leqslant C+\overline w_k .
\]
Consequently, $w_k \to -\infty$ uniformly on $M$ as $k \to +\infty$. This contradicts the fact that $\int_Me^{4w_k} \dvg =1$.

\medskip
\noindent\underline{Case 2}. Suppose that $\overline w_k\geqslant-C$ for some positive constant $C$. In view of \eqref{bdoftotalQcurvatureerror}, we choose $\gamma=1/17$ so that
\[
\gamma\int_M|Q_{g_k}| \dv_{g_k}<8\pi^2
\]
holds for large $k$. This estimate plays a similar role as that of \eqref{ubdofQ+}. Therefore, we can repeat the previous argument to get that
\begin{equation} \label{bdofintvk-}
 \big\|e^{- s_1 \gamma v_k^{(-)}}\big\|_{L^1 (M,g_0)}<+\infty
\end{equation}
for some $s_1>4$. This together with \eqref{pQeerror} and $\vol (M, g_0)=1$ implies that
\[
\begin{aligned}
\int_M\big(\alpha_k & |f_{\lambda_k}| \big)^{2^{-1}} e^{2v_k-2\gamma v_k^{(-)}} \dvg \\
&\leqslant \Big(\int_M\alpha_k|f_{\lambda_k}|e^{4v_k} \dvg \Big)^{1/2} \Big(\int_Me^{-4\gamma v_k^{(-)}} \dvg \Big)^{1/2}\\
 &=e^{-2\overline w_k} \Big( \int_M\alpha_k|f_{\lambda_k}|e^{4w_k} \dvg \Big)^{1/2} \Big(\int_Me^{-4\gamma v_k^{(-)}} \dvg \Big)^{1/2}\\
 & \leqslant e^{-2\overline w_k} \Big( \int_M \big( |Q_{g_k}| + |h_k| \big) e^{4w_k} \dvg \Big)^{1/2} \Big(\int_Me^{-4\gamma v_k^{(-)}} \dvg \Big)^{1/2}\\
 & \leqslant e^{-2\overline w_k} \Big(\int_M|Q_{g_k}| \dv_{g_k} + \big\| h_k\big\| _{L^2(M,g_k)} \Big)^{1/2} \Big(\int_Me^{-4\gamma v_k^{(-)}} \dvg \Big)^{1/2}.
\end{aligned}
\]
This, the lower bound of $\overline w_k$, \eqref{pHkerror}, \eqref{bdoftotalQcurvatureerror}, and \eqref{bdofintvk-} imply that
\begin{equation}\label{alphakbd0}
\int_M\big(\alpha_k|f_{\lambda_k}|\big)^{2^{-1}} e^{2v_k-2\gamma v_k^{(-)}} \dvg \leqslant C
\end{equation}
for some constant $C>0$. For any integer $m\geqslant1$, thanks to \eqref{bdofintvk-} and \eqref{alphakbd0}, we do iteration to get that
\begin{equation}\label{alphakbd}
\begin{aligned}
 \int_M\big( & \alpha_k|f_{\lambda_k}|\big)^{2^{-m}}e^{2^{2-m}v_k-2\gamma v_k^{(-)}} \dvg \\
 & \leqslant\Big(\int_M\big(\alpha_k|f_{\lambda_k}|\big)^{2^{1-m}}e^{2^{3-m}v_k-2\gamma v_k^{(-)}} \dvg \Big)^{2^{-1}}\Big(\int_Me^{- 4\gamma v_k^{(-)}} \dvg \Big)^{2^{-1}} \\
 & \leqslant\cdots\leqslant C\Big(\int_M\big(\alpha_k|f_{\lambda_k}|\big)^{2^{-1}}e^{2v_k-2\gamma v_k^{(-)}} \dvg \Big)^{2^{1-m}}\leqslant C
\end{aligned}
\end{equation}
for some new constant $C>0$. By choosing $m\geqslant1$ large enough such that $2^{1-m}<\gamma$ and fixing it, we have
\[
2^{2-m}v_k-2\gamma v_k^{(-)}=2\big[2^{1-m}v_k^{(+)}+(2^{1-m}-\gamma)v_k^{(-)}\big]\geqslant2^{2-m}v_k^{(+)}\geqslant 0.
\]
This implies that
$$\liminf_{k \to +\infty}\int_M|f_{\lambda_k}|^{2^{-m}}e^{2^{2-m}v_k-2\gamma v_k^{(-)}} \dvg \geqslant\int_M|f_0|^{2^{-m}} \dvg >0.$$
Substituting this estimate into \eqref{alphakbd} gives 
$$\liminf_{k \to +\infty}\alpha_k^{2^{-m}}\leqslant C\Big(\int_M|f_0|^{2^{-m}} \dvg \Big)^{-1}<+\infty,$$
which contradicts the fact that $\alpha_k \to +\infty$ as $k \to +\infty$ established in Step 3. This contradiction implies that there exists at least one point $x\in M$ such that \eqref{concentration} holds. Moreover, the bound of total $Q$-curvature \eqref{bdoftotalQcurvatureerror} shows that there can only have finitely many points in $M$ such that \eqref{concentration} holds. Let us denote by $I$ the number of such points and for clarity these points will be denoted by $x_\infty^{(i)}$ with 1 $\leqslant i \leqslant I$. This completes PART 1.

\medskip
\noindent\textbf{PART 2}. Proof of $f_0(x_\infty^i)=0$ for $1\leqslant i\leqslant I$ and $I\leqslant8$.

Suppose that, for some $i \in \{1 ,2 ,..., I\}$, we have $f_0(x_\infty^{(i)})<0$. Then, on one hand, for sufficiently small $\epsilon>0$, we may find some $r>0$ such that $$f_{\lambda_k}\leqslant-\epsilon/2$$ on $B_r(x_\infty^i)$ for $k$ sufficiently large. On the other hand, again we make use of the estimate $Q^+_{g_k}\leqslant (\alpha_kf_{\lambda_k})^+ +|h_k| \leqslant |h_k|$ to get
\begin{align*}
 \int_{B_r(x_\infty^i)}Q_{g_k}^+ \dv_{g_k}& \leqslant\int_{B_r(x_\infty^i)} \big(\alpha_kf_{\lambda_k} \big)^+ \dv_{g_k}+\int_M|h_k| \dv_{g_k} 
\leqslant \| h_k \| _{L^2(M,g_k)}^2,
\end{align*}
thanks to H\"older's inequality and $\vol(M, g_k)=1$. Thus $\int_{B_r(x_\infty^i)}Q_{g_k}^+ \dv_{g_k} \to 0$ as $k \to +\infty$, which contradicts \eqref{concentration}. Thus, \eqref{ConcentrationPointIsMax} holds. Finally, the estimate $I \leqslant 8$ follows from the inequality
\begin{align*}
\limsup_{k \to +\infty}\int_MQ^+_{g_k}e^{4w_k} \dvg \leqslant 64 \pi^2
\end{align*}
in \eqref{bdoftotalQcurvatureerror} and the inequality \eqref{concentration}.
\end{proof}

An immediate consequence of Lemma \ref{concentration0} is the following 
\begin{equation}\label{positivelbandubofalphaklambdak}
 8\pi^2-o(1)_{k \nearrow +\infty} \leqslant\alpha_k\lambda_k\leqslant 64\pi^2+o(1)_{k \nearrow +\infty}.
\end{equation}


\subsection{Blow-up analysis}
\label{subsec-BlowUpAnalysis}

In this subsection, we derive the blow-up behavior for the functions $w_k$ in \eqref{pQeerror}, namely
\[
\Po w_k=Q_{g_k}e^{4w_k}= \big( \alpha_kf_{\lambda_k} +h_k \big) e^{4w_k}
\]
under the following two hypotheses
\[
\limsup_{k \to +\infty}(\lambda_k\alpha_k)\leqslant64\pi^2
\]
and
\[
 \| h_k\| _{L^2(M,g_k)} = o (1)_{k \nearrow +\infty}.
\] 
We also characterize the shape of the associated conformal metrics $g_k = e^{2w_k} g_0$ as $k \to +\infty$. 

First we consider the non-degenerate case.

\subsubsection{Non-degenerate case}

\begin{theorem}\label{main3} 
Assume that the Paneitz operator $ \Po$ is positive with kernel consisting of constant functions. Let $f_0\leqslant0$ be a smooth, non-constant function with $\max_Mf_0=0$ having only non-degenerate maximum points. Then for $w_k$ as in \eqref{pQeerror} above and suitable $I\in\mathbb{N}$, $r_k^{(i)}\searrow0$, $x_k^{(i)} \to x_\infty^{(i)}\in M$ with $f_0(x_\infty^{(i)})=0$, $1\leqslant i\leqslant I$, as $k \to +\infty$ the following hold:
\begin{enumerate}[label=\rm (\roman*)]
 \item $w_k \to -\infty$ locally uniformly on $M_\infty=M\backslash\{x_\infty^{(i)};1\leqslant i\leqslant I\}$. 
 \item In normal coordinates around $x_\infty^{(i)}$, set $z^{(i)}_k=\exp_{x_\infty^{(i)}}^{-1}(x_k^{(i)})$ and $\widetilde{w}_k=w_k\circ\exp_{x_\infty^{(i)}}$. Then for each $1\leqslant i\leqslant I$, either
\begin{itemize}
 \item[{\rm(a)}] $\limsup_{k \to +\infty}r_k^{(i)}/\sqrt{\lambda_k} = 0$ and
\begin{align*}
\widehat w_k(z):=
\widetilde{w}\big(z^{(i)}_k+r_k^{(i)}z\big)+&\log r_k^{(i)} \to \widehat w_\infty(z)
\end{align*}
strongly in $H_{\rm loc}^4(\mathbf R^4)$, where $\widehat w_\infty$, up to a translation and a scaling, is given by 
$$\widehat w_\infty(z)=\log\Big(\frac{4\sqrt{6}}{4\sqrt{6}+|z|^2}\Big)$$
and it induces a spherical metric
\[
\widehat{g}_\infty=e^{4\widehat w_\infty}g_{\mathbf R^4}
\] 
of $Q$-curvature
\[
Q_{\widehat{g}_\infty}\equiv1
\] 
on $\mathbf R^4$ and $1\leqslant I\leqslant 4$, or
 \item[{\rm (b)}] $\limsup_{k \to +\infty}r_k^{(i)}/\sqrt{\lambda_k}>0$ and
\begin{align*}
\widehat w_k(z):=
\widetilde{w}\big(z_k^{(i)}+r_k^{(i)}z\big)+\log(r_k^{(i)}) \to \widehat w_\infty(z)
\end{align*}
strongly in $H^4_{\rm loc}(\mathbf R^4)$, where $\widehat w_\infty$, up to a translation and a scaling, solves
$$\Delta^2_z\widehat w_\infty(z)=\Big(1+\frac12{\rm Hess}_{f_0}(x_\infty^{(i)})\big[z,z\big]\Big)e^{4\widehat w_\infty}.$$
In addition, the metric
\[
\widehat{g}_\infty=e^{4\widehat w_\infty}g_{\mathbf R^4}
\]
on $\mathbf R^4$ has finite volume and finite total $Q$-curvature
\[
Q_{\widehat{g}_\infty}(z)=1+\frac12{\rm Hess}_{f_0}(x_\infty^{(i)})\big[z,z\big]
\]
and $1 \leqslant I \leqslant 8$.
\end{itemize}
\end{enumerate}
\end{theorem}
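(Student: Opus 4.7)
The plan is to perform a concentration/blow-up analysis at each of the finitely many points $x_\infty^{(i)}$ produced by Lemma \ref{concentration0}, rescaling $(w_k)$ at the natural ``maximum scale'' and classifying the resulting limit profile via the Paneitz--Liouville equation on $\mathbf{R}^4$. For conclusion (i), I would first decompose $w_k=\overline{w}_k+v_k^{(+)}+v_k^{(-)}$ as in the proof of Lemma \ref{concentration0}, and observe that away from the blow-up set $\{x_\infty^{(i)}\}$ both $v_k^{(\pm)}$ are controlled through the Green's representation \eqref{green} together with property (P2) of $\mathbb{G}$, so $w_k-\overline{w}_k$ stays locally bounded on $M_\infty$. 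On the other hand, $\int_M e^{4w_k}\,\dvg=1$ combined with concentration of the $Q^+_{g_k}$-mass at $\{x_\infty^{(i)}\}$ forces $\overline{w}_k\to-\infty$, whence $w_k\to-\infty$ locally uniformly on $M_\infty$.

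For part (ii), at each $x_\infty^{(i)}$ I would pick $x_k^{(i)}\to x_\infty^{(i)}$ realising the maximum of $w_k$ on a small fixed neighbourhood and set $r_k^{(i)}:=e^{-w_k(x_k^{(i)})}\searrow 0$. In normal coordinates around $x_\infty^{(i)}$, the rescaled functions $\widehat{w}_k(z)=\widetilde{w}_k(z_k^{(i)}+r_k^{(i)}z)+\log r_k^{(i)}$ satisfy $\widehat{w}_k\leqslant 0$ with $\widehat{w}_k(0)=0$, and the conformal transformation of $\Po$ in normal coordinates yields
\[
\Delta_z^2\widehat{w}_k+(r_k^{(i)})^2 L_k[\widehat{w}_k] = \Big(\alpha_k(r_k^{(i)})^4 f_{\lambda_k}\big(\exp_{x_\infty^{(i)}}(z_k^{(i)}+r_k^{(i)}z)\big)+(r_k^{(i)})^4 h_k\Big)\,e^{4\widehat{w}_k},
\]
where $L_k$ is a second-order perturbation with coefficients of order $(r_k^{(i)})^2$. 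The scale is then fixed (after subsequence extraction) so that $\alpha_k\lambda_k(r_k^{(i)})^4$ stays bounded with a positive limit, which is forced by the two-sided control \eqref{positivelbandubofalphaklambdak}. Using the Taylor expansion $f_0(\exp_{x_\infty^{(i)}}(y))=\tfrac{1}{2}\Hess_{f_0}(x_\infty^{(i)})[y,y]+O(|y|^3)$, the right-hand side converges locally to $c_\infty+\tfrac{1}{2}\alpha_k(r_k^{(i)})^6\Hess_{f_0}(x_\infty^{(i)})[z,z]$ plus vanishing errors, so the dichotomy is dictated by the ratio $r_k^{(i)}/\sqrt{\lambda_k}$: if this ratio tends to $0$ the Hessian term disappears, while if it stays bounded away from $0$ the quadratic term survives and produces the limit equation \eqref{eqLimitingEquationW-slow0}.

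Strong $H^4_{\mathrm{loc}}(\mathbf{R}^4)$-compactness follows from uniform $L^1$-bounds on the right-hand side (inherited from the $Q$-curvature bound \eqref{bdoftotalQcurvatureerror} combined with \eqref{pHkerror}) together with Proposition \ref{Mal06Proposition3.1} applied on balls in $\mathbf{R}^4$, as permitted by Remark \ref{Prop3.1}, plus standard $W^{4,p}$ elliptic theory. The normalisation $\widehat{w}_k(0)=0$ with $\widehat{w}_k\leqslant 0$ rules out alternative (ii) of Proposition \ref{Mal06Proposition3.1} on any compact ball and yields $L^q_{\mathrm{loc}}$-convergence of $e^{4\widehat{w}_k}$. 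In case (a), the limit satisfies $\Delta^2\widehat{w}_\infty=e^{4\widehat{w}_\infty}$, and the inherited finite-volume condition $\int_{\mathbf{R}^4} e^{4\widehat{w}_\infty}\,\dz<\infty$ combined with the classification of entire finite-volume solutions of this equation, due to Lin, pins down $\widehat{w}_\infty$ as the explicit spherical profile up to translation and scaling. In case (b) the limit satisfies \eqref{eqLimitingEquationW-slow0}; negative-definiteness of $\Hess_{f_0}(x_\infty^{(i)})$ and the existence/integrability result of Chang--Chen \cite{ChangChen01} cited in the Remark after Theorem \ref{main1} supply the finite-volume and finite-total-$Q$-curvature statements.

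The bubble counts come from the total $Q$-curvature budget: in case (a) each standard spherical bubble carries total $Q$-curvature $16\pi^2$, which together with $\limsup_k\int_M Q^+_{g_k}\,\dv_{g_k}\leqslant 64\pi^2$ from \eqref{bdoftotalQcurvatureerror} gives $I\leqslant 4$; in case (b) only the lower bound $8\pi^2$ per concentration point from \eqref{concentration} is available, yielding the weaker $I\leqslant 8$. The main obstacle will be the bubble-extraction induction: after extracting the first bubble at each $x_\infty^{(i)}$ one must argue, in the spirit of the concentration-compactness scheme of Struwe, that any further scale either yields a bubble disjoint (in the rescaled picture) from the previous ones, so that the $Q$-curvature masses add, or can be absorbed into a single dominant bubble; simultaneously one must verify that no intermediate non-spherical finite-volume solution of the Paneitz--Liouville equation can consume part of the budget in case (a), which is precisely what the classification theorem delivers.
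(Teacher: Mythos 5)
Your overall scheme---concentration points from Lemma~\ref{concentration0}, rescaling near each $x_\infty^{(i)}$, classification via a Liouville-type equation, and a budget argument to bound $I$---is the right skeleton and matches the paper's intent, but several steps as written contain genuine errors or gaps.

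First, your rescaled equation is wrong. With $\widehat w_k(z)=\widetilde w_k(z_k^{(i)}+r_k^{(i)}z)+\log r_k^{(i)}$, the $r_k^4$ coming from $\Delta_z^2$ under the dilation exactly cancels the $r_k^4$ produced by $e^{4\log r_k^{(i)}}$, so the rescaled equation reads $\Delta_z^2\widehat w_k=(\alpha_k\widetilde f_{\lambda_k}\circ\Gamma_k+\widetilde h_k\circ\Gamma_k)e^{4\widehat w_k}$ (up to lower-order curvature terms), with no extra $(r_k^{(i)})^4$ factor in front of $f_{\lambda_k}$ or $h_k$. Likewise the Hessian coefficient you display, $\tfrac12\alpha_k(r_k^{(i)})^6\Hess_{f_0}[z,z]$, should be $\tfrac12\alpha_k\Hess_{f_0}[z_k+r_kz,z_k+r_kz]\sim (\alpha_k\lambda_k)\cdot\tfrac12\Hess_{f_0}[\vec c_0+d_0z,\vec c_0+d_0z]$ once one divides inside the bracket by $\lambda_k$; your $r_k^6$ is a miscount. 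Consequently your statement that ``$\alpha_k\lambda_k(r_k^{(i)})^4$ stays bounded with a positive limit'' is false (it tends to zero, since $r_k\to 0$ while $\alpha_k\lambda_k$ is bounded); what is true and needed is that $\alpha_k\lambda_k$ stays bounded away from $0$ and $\infty$, \emph{and} that $r_k/\sqrt{\lambda_k}$ stays bounded.

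Second, that boundedness of $r_k/\sqrt{\lambda_k}$ is a real missing ingredient under your choice of scale. You take $x_k$ to maximize $w_k$ and set $r_k=e^{-w_k(x_k)}$, which gives the normalization $\widehat w_k\leqslant\widehat w_k(0)=0$ but does not a priori control $r_k$ against $\sqrt{\lambda_k}$. The paper instead defines $r_k$ via a volume-concentration device (``Claim 1''): one shows $\int_{B_{\sqrt{3\lambda_k/a_1}}(x_0)}e^{4w_k}\,\dvg\geqslant 7/65$ using the containment of $\{\alpha_k f_{\lambda_k}+\doverline h_k\geqslant 0\}$ in $B_{\sqrt{3\lambda_k/a_1}}(x_0)$, and then picks $r_k\in(0,\sqrt{3\lambda_k/a_1})$ with $\sup_x\int_{B_{r_k}(x)}e^{4w_k}\,\dvg=\rho$. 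That construction gives $r_k\leqslant\sqrt{3\lambda_k/a_1}$ for free, together with $\sup$-bounds like \eqref{integralw_k<=rho} on neighboring balls that feed into Proposition~\ref{Mal06Proposition3.1}. If you insist on the max-scale, you must separately prove $r_k=O(\sqrt{\lambda_k})$ (e.g.\ by showing that otherwise $\alpha_k|\widetilde f_0\circ\Gamma_k|\to+\infty$ uniformly on compact balls, contradicting the total-curvature bound); you do not do this, and without it the dichotomy in (ii) collapses.

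Third, in case (a) you jump directly to ``the limit satisfies $\Delta^2\widehat w_\infty=e^{4\widehat w_\infty}$ and Lin's classification pins it down.'' This skips two essential sub-arguments in the paper: (1) showing that the limiting constant $r_0$ on the right-hand side is strictly positive, which in the paper requires quoting Martinazzi's result on entire biharmonic functions with finite-volume growth, the result of Adimurthi--Robert--Struwe giving $\Delta\widehat w_\infty\equiv c_0>0$, and then a contradiction via the $L^1$ estimate $\int_{\widehat B_{R/2}}|\Delta_{\widehat g_k}\widehat w_k|\,\dv_{\widehat g_k}=O(R^2)$ from the Green's representation; and (2) after applying Lin's classification, ruling out the non-spherical alternative $-\Delta_z\widehat w_\infty\geqslant c_0>0$ by the same $O(R^2)$ comparison. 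Your final paragraph gestures at ``no intermediate non-spherical solution can consume part of the budget,'' but the classification theorem alone does not rule these out; the $\Delta$-comparison is the mechanism.

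Finally, your bubble count in case (a) via a $Q^+$-mass budget of $\approx16\pi^2$ per bubble against the cap $\leqslant 64\pi^2$ is a valid alternative to the paper's route, which instead uses the volume budget: each bubble carries at least $16\pi^2/r_0\geqslant 1/4$ of the unit total conformal volume, whence $I\leqslant 4$. Either works, but the $Q^+$-mass version requires one to check that the $Q^+$-masses at different blow-up points do not overlap and are fully captured, which is cleanest after establishing the volume version. The $I\leqslant 8$ bound in case (b) you attribute correctly to the $8\pi^2$ lower bound from \eqref{concentration}.

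In short: the decomposition and Part~(i) are essentially the paper's argument, but Part~(ii) as written has a wrong scaling, omits the $r_k\lesssim\sqrt{\lambda_k}$ bound that the paper's choice of $r_k$ secures for free, and omits the non-degeneracy ($r_0>0$) and non-spherical-exclusion arguments that make Lin's classification applicable.
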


\begin{proof}
Our proof consists of two parts.

\medskip
\noindent\textbf{PART 1}. We establish Part (i) of the theorem. Recall that
\[
M_\infty=M\backslash\{x_\infty^{(i)}: 1\leqslant i\leqslant I\}
\]
and let $x\in M_\infty$ be arbitrary. Then it follows from Lemma \ref{concentration0} that there exists a radius $r_x>0$ perhaps depending on $x$ such that for large $k$ we have
\[
\int_{B_{r_x}(x)}Q_{g_k}^+ \dv_{g_k}<8\pi^2.
\]
Following the same notations defined in \eqref{vkandvk+-}, we split 
\[
v_k=v_k^{(+)}+v_k^{(-)}.
\] 
Also, we let $\gamma=1/17$. Since the preceding estimate serves the same role as that of \eqref{ubdofQ+} in the proof of Lemma \ref{concentration0}, by repeating a similar argument used in the proof of Lemma \ref{concentration0} to get \eqref{bdofintvk+}, we find that
\[
\big\|e^{v_k^{(+)}}\big\|_{L^{s_x}(B_{r_x}(x))}+\big\|e^{-\gamma v_k^{(-)}}\big\|_{L^{s_x}(B_{r_x}(x))}\leqslant C
\]
for some $s_x>4$, which could also depend on $x$. 

Now, given any open subset $\Omega\subset\overline \Omega \subset M_\infty$, our aim in this part is to show that $w_k \to -\infty$ uniformly in $\Omega$. To this purpose, we first cover $\overline\Omega$ by finitely many balls $B_{r_j/2}(x_j)$, $1\leqslant j\leqslant N_0$, in such a way that for each ball $B_j :=B_{r_j}(x_j)$ with $1\leqslant j\leqslant N_0$ we still have
\[
\int_{B_j}Q_{g_k}^+ \dv_{g_k}<8\pi^2.
\]
Set $s = \min_{1 \leqslant j \leqslant N_0} s_{x_j}$. Then it follows from the uniform boundedness of $(v_k^{(+)})$ in $M$ that 
\[
\big\|e^{v_k^{(+)}}\big\|_{L^s(\Omega)}\leqslant \sum_{1\leqslant j\leqslant N_0}\big\|e^{v_k^{(+)}}\big\|_{L^s(B_j)}\leqslant C.
\]
We may assume that $\Omega$ is connected and large enough so that $\int_\Omega f_0 \dvg <0$. 
If there holds $\overline w_k \geqslant-C>-\infty$, then we may argue as in Case 2 of Step 4 in PART 1 of the proof of Lemma \ref{concentration0} to obtain
\[
\liminf_{k \to +\infty}\alpha_k^{2^{-m}}\leqslant C\Big(\int_\Omega|f_0|^{2^{-m}} \dvg \Big)^{-1}<+\infty,
\]
which contradicts the fact that $\alpha_k \to +\infty$ as $k \to +\infty$. Hence, we must have
\[
\overline w_k \to -\infty
\]
as $k \to +\infty$. Then it follows from the uniform boundedness of $(v_k^{(+)})$ that
\[
w_k=v_k+\overline w_k \leqslant v_k^{(+)}+\overline w_k \to -\infty
\]
as $k \to +\infty$. This finishes the proof of Part (i).
 
\medskip
 \noindent\textbf{PART 2}. Starting from now to the rest of the proof, we establish Part (ii) of the theorem, namely, the blow-up behavior near each point $x_\infty^{(i)}$ with $1 \leqslant i \leqslant I$. Since the proof of this part is rather long, we also divide it into several claims. Before doing so, we devote ourselves to preliminaries necessary for the blow-up analysis below. For simplicity, we denote $x_0=x_\infty^{(i)}$. Let $i_g$ be the injectivity radius of $M$. Clearly $i_g > 0$ since $M$ is compact and the restriction of $\exp_{x_0}$ to $\{X \in T_{x_0}M : \| X \|_{g_0}< i_g\}$ induces a diffeomorphism onto $B_{i_g}(x_0)$. Assimilating $(T_{x_0}M, g_0 (x_0))$ to $(\mathbf R^n, \dz^2)$ isometrically, one can then consider $\exp_{x_0}$ as a local chart around the point $x_0$. 
This allows us to select $\delta_0 \in (0, i_g/2)$ sufficiently small such that for all $x \in M$ and all $y, z \in \mathbf R^4$, if $|y|\leqslant \delta_0$ and $|z| \leqslant \delta_0$, then
\begin{equation}\label{RiemannianVsEuclidien}
\frac { |y-z|}2 \leqslant d_g (\exp_x (y), \exp_x (z) ) \leqslant 2 |y-z| ;
\end{equation}
see \cite[page 43]{DER}. Let $\delta < \min\{1,\delta_0\}$ and denote by $\widehat{B}_\delta(0)$ the open ball $\{ x \in \mathbf R^4 : |x| < \delta\}$ in $\mathbf R^4$. As always, we often use either a hat or a tilde to denote quantities in $\mathbf R^4$. We now consider the exponential map
\[
\exp_{x_0}:\widehat{B}_\delta(0) \to M
\]
with $\exp_{x_0}(0)=x_0$. We can also assume that $\delta > 0$ is chosen sufficiently small in order to guarantee that the only maxima of $f_0$ in $\exp_{x_0}(\widehat{B}_\delta(0))$ is $x_0$. Since $\exp_{x_0}$ is an isometric diffeomorphism onto $B_{i_g}(x_0)$, we deduce that
\begin{equation}\label{eqExponentialMap1}
\exp_{x_0} ( \widehat B_{ r}(0)) = B_{ r}(x_0))
\end{equation}
whenever $r < \delta_0 $, while by \eqref{RiemannianVsEuclidien} it is not hard to see that
\begin{equation}\label{eqExponentialMap2}
\exp_{x_0} ( \widehat B_{ r}( z)) \subset B_{2 r}(\exp_{x_0}( z))
\end{equation}
whenever $ |z| + r < \delta_0$ and that
\begin{equation}\label{eqExponentialMap3}
B_{ r}(\exp_{x_0}( z)) \subset \exp_{x_0} ( \widehat B_{ 2 r}( z))
\end{equation}
whenever $ |z| + r < \delta_0$. Combining \eqref{eqExponentialMap2} and \eqref{eqExponentialMap3} gives
\begin{equation}\label{eqExponentialMap}
\widehat B_{r/2} (\exp_{x_0}^{-1}(y)) 
\subset \exp_{x_0}^{-1} (B_r (y))
\subset \widehat B_{2r} (\exp_{x_0}^{-1}(y)),
\end{equation}
which is often used throughout the paper. Given a function $h$ on $M$ we denote
\[
\widetilde{h}=h\circ\exp_{x_0};
\] 
we also consider the pull-back metric
\[
\widetilde{g}_0=\exp_{x_0}^*g_0.
\] 
Since $x_0$ is a non-degenerate maxima of $f_0$, up to an action of the orthogonal group, we may assume that $\widetilde{f}_0$ has the following expansion
\begin{equation}\label{f0expan}
 \widetilde{f}_0(z)=-\sum_{i=1}^4a_i\big(z^i\big)^2+O(|z|^3),
\end{equation}
for any $z = (z^1,...,z^4) \in\widehat{B}_\delta(0)$ with $0<a_1\leqslant a_2\leqslant a_3\leqslant a_4$. If we choose $\delta$ even smaller, then we can further assume that
\[
-\frac {3a_4}2 |z|^2 \leqslant\widetilde{f}_0(z)\leqslant-\frac {a_1}2 |z|^2
\]
for all $z\in\widehat{B}_\delta(0)$. From now on let us consider large $k$ in such a way that
\[
 \lambda_k/a_1 < \delta^4 /24.
\] 
We also set 
\[
A_k=\big\{x\in M: \alpha_kf_{\lambda_k}(x)+\doverline h_k \geqslant0\big\} \cap \overline{B_\delta(x_0)},
\]
where $\doverline{h}_k$ is defined by \eqref{doublebarhk}. Clearly, 
\[
\big\{x\in M: \alpha_kf_{\lambda_k}(x)+\doverline h_k \geqslant0\big\} 
=
\Big\{x\in M: \lambda_k + \frac{\doverline h_k}{\alpha_k} \geqslant - f_0 (x)\Big\} .
\] 
Combining \eqref{hkgozero} and \eqref{positivelbandubofalphaklambdak} gives $\lambda_k + | \doverline h_k |/\alpha_k \leqslant (3/2)\lambda_k$. From this, the estimate $-\widetilde{f}_0(z)\geqslant (a_1/2) |z|^2$ in $\widehat{B}_\delta(0)$, and \eqref{eqExponentialMap1} we conclude that
\begin{equation}\label{bdofAk}
 \exp_{x_0}^{-1}(A_k)\subset \widehat{B}_{\sqrt{3\lambda_k/a_1}}(0).
\end{equation}

\medskip

\noindent{\bf Claim 1}. There exists a constant $\rho_0>0$ such that for each $\rho\in(0,\rho_0)$, there exists a sequence of positive numbers $(r_k)_k$ and a sequence of points $(x_k)_k\subset\overline{B_\delta(x_0)}$ satisfying 
\begin{subequations}\label{volumelowebd2}
\begin{empheq}[left=\empheqlbrace]{align}
\label{r_klambda_k}
&0<r_k \leqslant \sqrt{3\lambda_k/a_1},\\
\label{rho_k}
 &\int_{B_{r_k}(x_k)}e^{4w_k} \dvg=\rho,\\
\label{x_k->w_k->} 
&x_k\rightarrow x_0\quad\mbox{and}\quad w_k(x_k)\rightarrow+\infty, ~~\mbox{as}~k\rightarrow\infty,\\
\label{integralw_k<=rho}
&\int_{B_{r_k}(y)}e^{4w_k} \dvg\leqslant\rho, \quad \text{ for all }~{y\in B_{\sqrt{r_k}}(x_k)}.
\end{empheq}
\end{subequations}

\noindent{\itshape Proof of Claim 1}. 
It follows from Lemma \ref{concentration0} and $Q^+_{g_k}\leqslant (\alpha_k f_{\lambda_k}+\doverline h_k )^+ + |h_k - \doverline h_k|$ that
\begin{align*}
 8\pi^2-o(1)_{k \nearrow +\infty}& \leqslant 
 \int_{B_\delta(x_0)} (\alpha_k f_{\lambda_k}+\doverline h_k )^+ e^{4 w_k} \dvg + \int_{B_\delta(x_0)} |h_k - \doverline h_k | e^{4 w_k} \dvg .
\end{align*}
Making use of \eqref{hkgozero} we further obtain
\[
 8\pi^2-o(1)_{k \nearrow +\infty} \leqslant \int_{B_\delta(x_0)} (\alpha_k f_{\lambda_k}+\doverline h_k )^+ e^{4 w_k} \dvg + o(1)_{k \nearrow +\infty} .
\]
On the other hand, by \eqref{bdofAk}, \eqref{hkgozero}, and \eqref{positivelbandubofalphaklambdak} we can estimate
\begin{align*}
 \int_{B_\delta(x_0)} (\alpha_k f_{\lambda_k}+\doverline h_k )^+ e^{4 w_k} \dvg &= \int_{A_k} (\alpha_k f_{\lambda_k}+\doverline h_k ) e^{4 w_k} \dvg \\
&= \int_{\exp_{x_0}^{-1}(A_k)} (\alpha_k \widetilde f_{\lambda_k}+\doverline h_k ) e^{4\widetilde{w}_k} \dv_{\widetilde{g}_0} \\
&\leqslant \int_{\exp_{x_0}^{-1}(A_k)} \big(\alpha_k\lambda_k+o(1)_{k \nearrow +\infty}\big)e^{4\widetilde{w}_k} \dv_{\widetilde{g}_0} \\
&\leqslant\big(64\pi^2+o(1)_{k \nearrow +\infty}\big)\int_{B_{\sqrt{3\lambda_k/a_1}}(x_0)}e^{4w_k} \dvg.
\end{align*}
Putting all these estimates together, we eventually get
\begin{equation*}
\begin{aligned}
 8\pi^2-o(1)_{k \nearrow +\infty}& \leqslant \big(64\pi^2+o(1)_{k \nearrow +\infty}\big)\int_{B_{\sqrt{3\lambda_k/a_1}}(x_0)}e^{4w_k } \dvg.
\end{aligned}
\end{equation*}
Hence, we have just shown that
\begin{equation}\label{volumelowerbd1}
 \int_{B_{\sqrt{3\lambda_k/a_1}}(x_0)}e^{4w_k} \dvg \geqslant\frac{7}{65}.
\end{equation}
Now, if we let
\[
\tau(s)=\sup_{x\in \overline{B_\delta(x_0)}}\int_{B_s(x)}e^{4w_k} \dvg,
\]
then $\tau(0)=0$ and by \eqref{volumelowerbd1} we know that
\[
\tau(\sqrt{3\lambda_k/a_1})\geqslant \frac 7{65}.
\] 
By setting $\rho_0=7/65$ and by the continuity of $\tau$, we thus have for each $\rho\in(0,\rho_0)$, there exists some $r_k\in(0, \sqrt{3\lambda_k/a_1})$ such that $\tau(r_k)=\rho$. Furthermore, the compactness of $\overline{B_{\delta}(x_0)}$ allows us to choose $x_k\in \overline{B_{\delta}(x_0)}$ such that 
\[
\int_{B_{r_k}(x_k)}e^{4 w_k} \dvg =\sup_{x\in \overline{B_\delta(x_0)}}\int_{B_{r_k}(x)}e^{4w_k} \dvg
\]
for each $k\in \mathbb{N}$. This finishes the proof of \eqref{r_klambda_k} and \eqref{rho_k}. 

Next, let us show \eqref{x_k->w_k->}. To see this, we assume by contradiction that $w_k(x_k)\leqslant C_w$ for some constant $C_w>0$. On one hand, by the estimate $(\alpha_kf_{\lambda_k}+\doverline h_k )^+ \geqslant (\alpha_kf_{\lambda_k}+ h_k)^+ - |h_k - \doverline h_k|$, Lemma \ref{concentration0}, and \eqref{hkgozero}, we get
\begin{align*}
\liminf_{k \to +\infty} & \int_{B_\delta(x_0)}\big(\alpha_kf_{\lambda_k}+\doverline h_k\big)^+e^{4w_k} \dvg \\
 & \geqslant\liminf_{k \to +\infty}\Big(\int_{B_\delta(x_0)}(\alpha_kf_{\lambda_k}+h_k)^+e^{4w_k} \dvg - \| h_k-\doverline h_k \| _{L^1(M,g_k)}\Big)\\
 & \geqslant8\pi^2.
\end{align*}
On the other hand, we have, by \eqref{positivelbandubofalphaklambdak} and \eqref{hkgozero}, that
\begin{align*}
\liminf_{k \to +\infty}\int_{B_\delta(x_0)} (\alpha_kf_{\lambda_k}+\doverline h_k)^+e^{4w_k} \dvg
&=\liminf_{k \to +\infty}\int_{A_k}(\alpha_kf_{\lambda_k}+\doverline h_k)e^{4w_k} \dvg\\
& \leqslant\liminf_{k \to +\infty}\Big(e^{4C_w} \alpha_k\lambda_k\int_{A_k} \dvg \Big)\\
& \leqslant O(\delta^4).
\end{align*}
We thus obtain a contradiction if we choose $\delta$ small at the beginning. Thus, we have already established the unboundedness of $w_k (x_k)$. To see why $x_k \to x_0$ as $k \to +\infty$, we assume by contradiction that $x_k \to x_*\neq x_0$. Clearly, $x_*\in M_\infty$ because $(x_k) \subset \overline{B_\delta (x_0)}$. By the result in Part (i) we know that $w_k(x_*) \to -\infty$ which contradicts the fact that $w_k(x_k) \to +\infty$.

Finally, keep in mind that $r_k < \sqrt{3\lambda_k/a_1}< (\delta/2)^2$. This together with the proved fact \eqref{x_k->w_k->} above immediately implies that $B_{\sqrt{r_k}}(x_k) \subset B_{\delta}(x_0)$. Hence from our choice of $\rho$ we have
\[
\int_{B_{r_k}(y)}e^{4 w_k} \dvg \leqslant\rho
\]
for all $y\in B_{\sqrt{r_k}}(x_k)$. Thus \eqref{integralw_k<=rho} is proved and we complete the proof of Claim 1. \qed

\medskip

Set $z_k=\exp_{x_0}^{-1}(x_k).$ With the choice of $r_k$ and $z_k$ above, we consider in $\mathbf R^4$ the translation--dilation
\begin{align*}
\Gamma_k: z & \mapsto z_k+r_kz.
\end{align*}
Clearly, $\Gamma_k (\widehat D_{k,\delta})= \widehat B_{\delta}(0)$, where, given $r>0$, the set $\widehat D_{k,r}$ is defined as follows
\[
\widehat D_{k,r} := \{ z \in \mathbf R^4 : |z_k + r_k z| < r \}. 
\]
Clearly we may rewrite $\widehat D_{k,r}$ as $\widehat D_{k,r}=\widehat B_{r/r_k}(-z_k/r_k)$. Recall that $r_k \to 0$ and $z_k \to 0$ as $k \to +\infty$ by Claim 1. This implies that $z_k/r_k = o(r/r_k)_{k \nearrow +\infty}$. From this we deduce that, for each $r>0$ fixed, the set $\widehat D_{k,r}$ exhausts $\mathbf R^4$ as $k \to +\infty$. Next, we consider the scaled metrics 
\[
\widehat g_k=r_k^{-2}\Gamma_k^*\widetilde{g}_0
\]
on $\widehat D_{k,r}$. Also, we define 
\begin{equation}\label{eqWHat_k}
\widehat w_k=\widetilde{w}_k \circ \Gamma_k +\log r_k.
\end{equation}
Making use of \eqref{eqExponentialMap} gives
\begin{equation}\label{eqExponentialMap4}
\widehat B_{R/2} (0) 
\subset (\exp_{x_0} \circ \Gamma_k) ^{-1} (B_{Rr_k} (x_k))
\subset \widehat B_{2R} (0).
\end{equation}
 In view of the conformally covariant property of $\PP$, there holds $\PP_{\widehat g_k} = r_k^4 \PP_{\Gamma_k^*\widetilde g_0}$. Then by a direct computation, the function $\widehat w_k$ satisfies
 \begin{equation}\label{eqBeforeLimitingEquation}
 \begin{aligned}
 \PP_{\widehat g_k}\widehat w_k(z) & = r_k^4 \PP_{\Gamma_k^*\widetilde g_0} \big( \widetilde w_k(\Gamma_k(z)) \big)\\
 & = \big(\alpha_k\widetilde{f}_{\lambda_k}(\Gamma_k(z))+\widetilde{h}_k\circ\Gamma_k\big)e^{4[ \widetilde w_k(\Gamma_k(z)) + \log r_k ]} \\
& =\widehat f_k(z) e^{4\widehat{w}_k(z)},
 \end{aligned}
 \end{equation}
 where 
 \begin{equation*}
 \widehat f_k =\alpha_k\widetilde{f}_{\lambda_k} \circ \Gamma_k +\widetilde{h}_k\circ\Gamma_k.
 \end{equation*}
Using the exponential map, we can rewrite the identity in \eqref{rho_k} as follows
\begin{equation}\label{volumeconcentration2}
 \rho=\int_{(\exp_{x_0}\circ\Gamma_k)^{-1}(B_{r_k}(x_k))} e^{4\widehat w_k} \dv_{\widehat g_k} .
\end{equation}
To rewrite the inequality in \eqref{integralw_k<=rho}, first we make use of \eqref{eqExponentialMap} to get
\[
\widehat B_{r_k/2} (\exp_{x_0}^{-1}(y)) \subset \exp_{x_0}^{-1} (B_{r_k} (y)).
\]
Since $\Gamma_k^{-1} : z \mapsto (z-z_k)/r_k$, we deduce that
\[
\widehat B_{1/2} \Big( \frac{\exp_{x_0}^{-1}(y) - z_k}{r_k} \Big) \subset (\exp_{x_0}\circ\Gamma_k)^{-1} (B_{r_k} (y)).
\]
Therefore, the last inequality in \eqref{volumelowebd2} gives
$$\int_{\widehat B_{1/2} \big( \frac{\exp_{x_0}^{-1}(y) - z_k}{r_k} \big)} e^{4\widehat w_k} \dv_{\widehat g_k} \leqslant \rho$$
for all $y\in B_{\sqrt{r_k}}(x_k)$. By \eqref{eqExponentialMap4}, notice that
\[
(\exp_{x_0}\circ\Gamma_k)\big(\widehat B_{1/(2\sqrt{r_k})}(0)\big)=\exp_{x_0}\big(\widehat B_{\sqrt{r_k}/2}(z_k)\big)\subset B_{\sqrt{r_k}}(x_k).
\]
Hence, substituting $y=\exp_{x_0}(\Gamma_k(z))$ into the inequality above yields
\begin{equation}\label{volumeconcentration3}
\int_{\widehat B_{1/2}(z)} e^{4\widehat w_k} \dv_{\widehat g_k} \leqslant \rho
\end{equation}
for any $z \in \widehat B_{1/(2\sqrt{r_k})}(0)$. Since the set $\widehat{B}_{1/(2\sqrt{r_k})}(0)$ exhausts $\mathbf R^4$ as $k \to +\infty$, we can freely use \eqref{volumeconcentration3} for arbitrary $z$ in any fixed ball provided $k$ is suitably large. In the next step, we provide a more precise estimate on $ d(x_k,x_0)$ in terms of $\lambda_k$.

\medskip

\noindent{\bf Claim 2}. There exists some constant $C>0$ such that 
\begin{equation} \label{bdofdistxkandx0}
 d(x_k,x_0)\leqslant C\sqrt{\lambda_k}
\end{equation}
for all $k$ large. 

\noindent{\itshape Proof of Claim 2}. If this were not true, then we would have $d(x_k,x_0)/\sqrt{\lambda_k} \to +\infty$ as $k \to +\infty$. From the expansion of $\widetilde f_0$ in \eqref{f0expan}, the bound for $\alpha_k\lambda_k$ in \eqref{positivelbandubofalphaklambdak}, and the inequality $r_k^2/\lambda_k \leqslant 3/a_1$ we obtain for any fixed $R>2$ with $|z|\leqslant R$
\begin{align*}
 \alpha_k|\widetilde{f}_0(\Gamma_k(z))| 
 &\geqslant\big(\alpha_k\lambda_k\big)\lambda_k^{-1}\Big(\frac{a_1}{2}\big|z_k+r_kz\big|^2\Big)\\
 &\geqslant\big(\alpha_k\lambda_k\big)\lambda_k^{-1}\frac{a_1}{2}\Big( \frac12|z_k|^2-r_k^2|z|^2 \Big)\\
 &\geqslant 7\pi^2\Big(\frac{a_1}{4}\frac{|z_k|^2}{\lambda_k}-\frac32 R^2\Big) .
\end{align*}
This together with the fact that $|z_k|/\sqrt{\lambda_k} \to +\infty$ as $k \to +\infty$ implies that
\begin{equation*}
 \alpha_k|\widetilde{f}_0(\Gamma_k(z))| \to +\infty
\end{equation*}
uniformly in the ball $\widehat{B}_{R}(0)$. Thus for $K=65\pi^2/\rho$ there holds $\alpha_k|\widetilde{f}_0(\Gamma_k(z))|\geqslant K$ for all $z\in \widehat{B}_{R}(0)$ provided $k$ is large enough. Note that $|f_0| = \lambda_k - f_{\lambda_k}$. From this we may write
$$\alpha_k|\widetilde{f}_0(\Gamma_k(z))|=\alpha_k\lambda_k-\alpha_k\widetilde{f}_{\lambda_k}(\Gamma_k(z)).$$
Then by \eqref{rho_k}, $R>2$, \eqref{eqExponentialMap4}, and \eqref{hkgozero}, for $k$ large enough, we have the estimate
\begin{align*}
 65\pi^2&\leqslant K\int_{B_{Rr_k/2}(x_k)}e^{4w_k} \dvg \\
&=K\int_{(\exp_{x_0}\circ\Gamma_k)^{-1}(B_{Rr_k/2}(x_k))} e^{4\widehat w_k} \dv_{\widehat g_k}\\
&\leqslant\int_{\widehat{B}_R(0)}\alpha_k|\widetilde{f}_0(\Gamma_k(z))|e^{4\widehat w_k} \dv_{\widehat{g}_k}\\
 &= \int_{\widehat{B}_R(0)}\big(\alpha_k\lambda_k-\alpha_k\widetilde{f}_{\lambda_k}(\Gamma_k(z))\big)e^{4\widehat w_k} \dv_{\widehat{g}_k}\\
 &\leqslant \int_{(\exp_{x_0}\circ\Gamma_k)^{-1}(B_{2Rr_k}(x_k))} \big(\alpha_k\lambda_k-\alpha_k\widetilde{f}_{\lambda_k}(\Gamma_k(z))\big) e^{4\widehat w_k} \dv_{\widehat g_k}\\
 & = \int_{B_{2Rr_k}(x_k)} \big(\alpha_k\lambda_k- \alpha_kf_{\lambda_k} \big)e^{4w_k} \dvg \\
 &\leqslant \int_M \big(\alpha_k\lambda_k-(\alpha_kf_{\lambda_k}+h_k)\big)e^{4w_k} \dvg +o(1)\\
 &\leqslant \alpha_k\lambda_k+o(1) \\
 &\leqslant64\pi^2+o(1),
\end{align*}
which is impossible for $k$ sufficiently large. This proves \eqref{bdofdistxkandx0}. \qed

\smallskip

Using the expansion of $\widetilde f_0$ in \eqref{f0expan} we may write $\alpha_k\widetilde f_{\lambda_k}\circ\Gamma_k$ as
\begin{equation} \label{widehatfk}
\begin{aligned}
 \alpha_k\widetilde f_{\lambda_k}\big(\Gamma_k(z)\big)&=\alpha_k\lambda_k+\alpha_k\widetilde{f}_0(\Gamma_k(z)) \\
 &= \alpha_k\lambda_k\big[\lambda_k^{-1}\widetilde{f}_0(\Gamma_k(z))+1\big]\\
 &= \alpha_k\lambda_k
\left[
\begin{aligned}
&-\sum_{i=1}^4a_i\Big(\frac{z_k^i}{\sqrt{\lambda_k}}+\frac{r_k}{\sqrt{\lambda_k}}z^i\Big)^2\\
&+O\Big(\sqrt{\lambda_k}\Big|\frac{z_k}{\sqrt{\lambda_k}}+\frac{r_k}{\sqrt{\lambda_k}}z\Big|^3\Big) +1
\end{aligned}
\right].
\end{aligned}
\end{equation}
It follows from \eqref{volumelowebd2} and \eqref{bdofdistxkandx0} that 
$$\frac{r_k}{\sqrt{\lambda_k}}\leqslant \sqrt{3/a_1}\quad\mbox{and}\quad \frac{|z_k|}{\sqrt{\lambda_k}}\leqslant C.$$
Plugging these into \eqref{widehatfk} and using \eqref{positivelbandubofalphaklambdak} we can find a positive constant $C_R$ such that
\begin{equation}\label{uniformbdofwidehatfk}
 \alpha_k\big|\widetilde f_{\lambda_k}(\Gamma_k(z))\big|\leqslant C_R
\end{equation}
for any $z\in \widehat B_R(0)$.

\medskip
\noindent{\bf Claim 3}. Let $\widehat w_k$ be given in \eqref{eqWHat_k}. Then $\widehat{w}_k$ is bounded in $W^{4,s_0}_{\rm loc}(\mathbf R^4)$ for some $s_0>1$. Thus, there exists a function $\widehat w_\infty$ such that $\widehat{w}_k \to \widehat w_\infty$ strongly in $C^{0,\alpha}_{\rm loc}(\mathbf R^4)\cap H^2_{\rm loc}(\mathbf R^4)$ for any $0<\alpha< 1-1/s_0$. Moreover, there holds
\begin{equation}
 \label{volumeofhatWinftybd}
 \int_{\mathbf R^4}e^{4\widehat w_\infty} \dz\leqslant1.
\end{equation}

\noindent{\itshape Proof of Claim 3}: 
We borrow the method used in the proof of \cite[Proposition 3.4]{Ma}. Let $R>8$ be arbitrary but fixed. Then we define a smooth cut-off function $\eta_R$ with
\[
\eta_R (z)=
\begin{cases}
 1 &\mbox{if}~z \in \widehat B_{R/2}(0),\\
 0 &\mbox{if}~ z \in \mathbf R^4 \setminus \widehat B_{2R}(0).
\end{cases}
\]
Set
\[
\left\{
\begin{aligned}
a_k&=\frac{1}{|\widehat B_{R}(0)|}\int_{\widehat B_{R}(0)}\widehat w_k \dv_{\widehat g_k}, &\\
\Phi_k&=\eta_R\widehat w_k+(1-\eta_R)a_k, &\mbox{ and}\\
\widehat{\Phi}_k&=\Phi_k-a_k. &
\end{aligned}
\right.
\]
Then
\[
\Phi_k=
\begin{cases}
 \widehat w_k &\mbox{on}~\widehat B_{R/2}(0),\\
 a_k &\mbox{on}~\mathbf R^4 \setminus \widehat B_{2R}(0).
\end{cases}
\]
In particular, $\widehat{\Phi}_k=0$ in $\mathbf R^4\setminus \widehat B_{2R}(0)$. Hence, $\widehat{\Phi}_k$ has a uniform compact support. Observe that $\widehat \Phi_k =\eta_R \big( \widehat w_k - a_k)$. From this and the equation satisfied by $ \widehat w_k$ in \eqref{eqBeforeLimitingEquation}, it is not hard to see that $\widehat{\Phi}_k$ satisfies the following equation
\begin{equation}\label{eqforhatwk}
\PP_{\widehat g_k}\widehat{\Phi}_k=\eta_R\PP_{\widehat g_k} \widehat w_k +L_k\big(\widehat w_k-a_k\big) = \varphi_k,
\end{equation}
where
\[
\varphi_k=\eta_R\widehat f_ke^{4\widehat{w}_k}+L_k\big(\widehat w_k-a_k\big).
\]
Note that in \eqref{eqforhatwk}, $(L_k)_k$ are linear operators containing derivatives of order $0, 1, 2$ and $3$ with uniformly bounded and smooth coefficients. Therefore, by Lemma \ref{Mal06Lemma2.3} and some scaling argument one can easily find that
\begin{equation}\label{hatwkw3sbound}
\int_{\widehat B_{2R} (0)}\big(|\nabla^3\widehat{w}_k|^s+|\nabla^2\widehat{w}_k|^s+|\nabla\widehat{w}_k|^s\big) \dv_{\widehat g_k}\leqslant C_R
\end{equation}
for any $k\in\mathbb{N}$ and any $s\in[1,4/3)$. Since $\widehat{\Phi}_k$ has compact support and $\widehat{\Phi}_k = \widehat w_k - a_k$ in $\widehat B_{R/2} (0)$, we can apply $L^s$-Poincar\'e's inequality to get
\begin{equation}\label{hatwklsbound}
 \int_{\widehat B_{R/2}(0)}|\widehat{\Phi}_k|^s \dv_{\widehat g_k}\leqslant C_R
\end{equation}
for any $k\in\mathbb{N}$ and any $s\in[1,4/3)$. It follows from \eqref{hatwkw3sbound} and \eqref{hatwklsbound} that
\begin{equation}\label{Lklsbound}
\int_{\widehat B_{R/2} (0)}\big|L_k\big(\widehat w_k-a_k\big)\big|^s \dv_{\widehat g_k}\leqslant C_R
\end{equation}
for any $k\in\mathbb{N}$ and any $s\in[1,4/3)$. We now use \eqref{Lklsbound} together with H\"older's inequality to conclude, for any $z\in \widehat B_{R/4}(0)$ and $r>0$ sufficiently small, that
\begin{equation}\label{integralboundforLk}
\int_{\widehat{B}_{r}(z)}\big|L_k\big(\widehat w_k-a_k\big)\big| \dv_{\widehat g_k}\leqslant O(r).
\end{equation}
On the other hand, it follows from the boundedness of $\|h_k\|_{L^2(M,g_k)}$ in \eqref{pHkerror}, the boundedness of $\alpha_k \widetilde f_{\lambda_k} \circ \Gamma_k$ in \eqref{uniformbdofwidehatfk}, and the estimate of $\int e^{4\widehat{w}_k} \dv_{\widehat g_k}$ in \eqref{volumeconcentration3} that
\begin{equation}\label{integralboundforhatQ+}
\begin{aligned}
\int_{\widehat{B}_r(z)}\big| \widehat{f}_ke^{4\widehat{w}_k}\big| \dv_{\widehat g_k} 
 & \leqslant \int_{\widehat{B}_r(z)}\alpha_k\big|\widetilde{f}_{\lambda_k}\circ\Gamma_k\big|e^{4\widehat{w}_k} \dv_{\widehat g_k}+\int_{\widehat{B}_r(z)}\big|\widetilde{h}_k\circ\Gamma_k\big|e^{4\widehat{w}_k}~\dv_{\widehat g_k}\\
 & \leqslant C_R\int_{\widehat{B}_r(z)}e^{4\widehat{w}_k} \dv_{\widehat g_k} +\big\|h_k\big\|_{L^2(M,g_k)}\\
 & \leqslant C_R\rho+\epsilon_k\leqslant 2C_R\rho\\
\end{aligned}
\end{equation}
for any $z\in \widehat B_{R/4}(0)$, $r>0$ small, and $k$ large. Hence, by choosing $r>0$ and $\rho>0$ suitably small, we obtain from \eqref{integralboundforLk} and \eqref{integralboundforhatQ+} the following estimate
\[
\int_{\widehat{B}_r(z)}|\varphi_k| \dv_{\widehat g_k}<8\pi^2
\]
for all $z\in \widehat B_{R/4}(0)$. Then, it follows from the equation solved by $\widehat \Phi_k$ in \eqref{eqforhatwk}, Remark \ref{Prop3.1} and a finite covering argument that there exists some $s_1>1$ such that
\begin{equation}\label{e4hatwklsbound}
\int_{\widehat B_{R/4}(0)}e^{4s_1\widehat{\Phi}_k} \dv_{\widehat g_k}\leqslant C,
\end{equation}
where $C>0$ is a fixed constant.

Next, we show that $a_k$ is bounded. To see this, it follows from Jensen's inequality, \eqref{eqExponentialMap2} and the fact that $\int_M e^{ 4w_k } \dvg=1$ that
\begin{align*}
 a_k&= \frac{1}{|\widehat B_{R}(0)|}\int_{ \widehat B_{R}(0)}\widehat w_k \dv_{\widehat g_k} \\
&\leqslant\frac14\log\Big(\frac{1}{|\widehat B_{R}(0) |}\int_{\widehat B_{R}(0)}e^{4\widehat w_k} \dv_{\widehat g_k}\Big)\\
&\leqslant\frac14\log\Big(\frac{1}{|\widehat B_{R}(0) |}\int_{B_{2Rr_k}(x_k)}e^{4w_k} \dvg\Big) \leqslant C_R.
\end{align*}
To bound $a_k$ from below, we recall from \eqref{volumeconcentration2} the following
\[
\int_{(\exp_{x_0}\circ\Gamma_k)^{-1}(B_{r_k}(x_k))} e^{4\widehat w_k} \dv_{\widehat g_k} 
= \rho.
\]
Making use of \eqref{eqExponentialMap4} gives
\[
 (\exp_{x_0}\circ\Gamma_k)^{-1}(B_{r_k}(x_k)) \subset \widehat B_2 (0) .
\]
Consequently, for $k$ large and because $R/4>2$, we arrive at
\[
\int_{\widehat B_{R/4}(0)} e^{4\widehat w_k} \dv_{\widehat g_k} \geqslant \rho.
\] 
This together with the fact that $\Phi_k=\widehat w_k$ in $\widehat B_{R/4}(0)$, we obtain
\begin{align*}
\rho & \leqslant \int_{ \widehat B_{R/4} (0)}e^{4\Phi_k} \dv_{\widehat g_k}
 = e^{4 a_k}\int_{ \widehat B_{R/4} (0) }e^{4\widehat \Phi_k} \dv_{\widehat g_k},
\end{align*}
which implies by \eqref{e4hatwklsbound} that $a_k\geqslant -C_R$ and hence we find
\[
|a_k|\leqslant C_R.
\]
Using this fact, we have by \eqref{hatwklsbound} and Minkowski's inequality that
\begin{equation}\label{hatuk+lsbound}
 \int_{ \widehat B_{R/2}(0) }\big|\widehat w_k\big|^s \dv_{\widehat g_k}\leqslant C_R
\end{equation}
for all $s\in [1,4/3)$ and by \eqref{e4hatwklsbound} that
\begin{equation}\label{e4hatuk+lsbound}
 \int_{ \widehat B_{R/4} (0)}e^{4s_1\widehat w_k} \dv_{\widehat g_k}\leqslant C_R.
\end{equation}
Now, we take $1<s_2< \min\{s_1,2\}$ and let $s_0=2s_2/(1+s_2)$. Then $1<s_0<\min\{4/3,s_1\}$ and $s_2=s_0/(2-s_0)$. Using the boundedness of $\alpha_k \widetilde f_{\lambda_k} \circ \Gamma_k$ in \eqref{uniformbdofwidehatfk}, H\"older's inequality, the estimate of $\|h_k\|_{L^2(M,g_k)}$ in \eqref{pHkerror}, and \eqref{e4hatuk+lsbound} we can bound
\begin{align}\label{hatQ+lsbound}
\begin{aligned}
 \int_{ \widehat B_{R/4} (0)}\big| \widehat f_ke^{4\widehat{w}_k}\big|^{s_0} \dv_{\widehat g_k} 
 \leqslant &C\int_{\widehat B_{R/4} (0)}\big|\alpha_k\widetilde{f}_{\lambda_k} \circ \Gamma_k\big|^{s_0}e^{4s_0\widehat{w}_k}~\dv_{\widehat g_k} \\
 &+C\int_{\widehat B_{R/4}(0) }|\widetilde{h}_k\circ\Gamma_k|^{s_0}e^{2s_0\widehat{w}_k}e^{2s_0\widehat{w}_k}~\dv_{\widehat g_k}\\
 \leqslant & C_R\bigg(\int_{\widehat B_{R/4}(0) }e^{4s_1\widehat{w}_k}~\dv_{\widehat g_k}\bigg)^{s_0/s_1} \\
 &+C\bigg(\int_{\widehat B_{R/4}(0) }|\widetilde{h}_k\circ\Gamma_k|^2e^{4\widehat{w}_k}~\dv_{\widehat g_k}\bigg)^{s_0/2} \\
 & \times\bigg(\int_{\widehat B_{R/4}(0) }e^{4s_2\widehat{w}_k }~\dv_{\widehat g_k}\bigg)^{s_0/(2s_2)} \\
 \leqslant &C_R+C_R\|h_k\|_{L^2(M,g_k)}\bigg(\int_{\widehat B_{R/4}(0) }e^{4s_1\widehat{w}_k }~\dv_{\widehat g_k}\bigg)^{s_0s_1/(2s_2^2)}\\
\leqslant & C_R.
\end{aligned}
\end{align}
Plugging \eqref{hatQ+lsbound} into \eqref{eqBeforeLimitingEquation} gives 
$$\int_{ \widehat B_{R/4} (0)}|\PP_{\widehat g_k}\widehat w_k|^{s_0} \dv_{\widehat g_k}\leqslant C_R,$$
which together with \eqref{hatuk+lsbound} implies that $\widehat w_k$ is bounded in $W^{4,s_0}_{\rm loc}(\mathbf R^4)$. In particular, Sobolev embedding theorem implies that $\widehat{w}_k \to \widehat w_\infty$ strongly in $C^{0,\alpha}_{\rm loc}(\mathbf R^4)\cap H^2_{\rm loc}(\mathbf R^4)$ with $0<\alpha<1-1/s_0$. It remains to establish \eqref{volumeofhatWinftybd}. Indeed, by Fatou's lemma, \eqref{eqExponentialMap4} and the fact that $\int_Me^{4w}~\dvg=1$ we obtain
\begin{eqnarray*}\int_{\widehat B_R(0)}e^{4\widehat w_\infty} \dz&\leqslant&\liminf_{k\rightarrow+\infty}\int_{\widehat B_R(0)}e^{4\widehat w_k}~d\mu_{\widehat g_k}\nonumber\\
&\leqslant&\liminf_{k\rightarrow+\infty}\int_{B_{2Rr_k}(x_k)}e^{4w_k}~\dvg\leqslant1.
\end{eqnarray*}
Passing to the limit $R\rightarrow+\infty$ we find 
$$\int_{\mathbf R^4}e^{4\widehat w_\infty} \dz\leqslant1.$$
We thus finish the proof of Claim 3. \qed

\medskip
\noindent\textbf{Claim 4}. The assertions in Theorem \ref{main3}(ii) hold true.

\noindent{\itshape Proof of Claim 4}. 
Since $0<r_k/\sqrt{\lambda_k}\leqslant\sqrt{3/a_1}$ by \eqref{volumelowebd2}, we have two possibilities.
\smallskip

\noindent\underline{Case 1}. There holds $\limsup_{k \to +\infty}r_k/\sqrt{\lambda_k}=0$. In this scenario, recall that the estimate $d(x_k, x_0) = O(\sqrt{\lambda_k})_{k \nearrow +\infty}$ in \eqref{bdofdistxkandx0} implies that $|z_k| = O(\sqrt{\lambda_k})_{k \nearrow +\infty}$ if $k$ is large enough. This together with \eqref{positivelbandubofalphaklambdak} and \eqref{widehatfk} implies that there exists some $r_0 \leqslant 64\pi^2$ such that, up to a subsequence,
\begin{equation}\label{limitofhatfk1}
\lim_{k \to +\infty} \alpha_k\widetilde f_{\lambda_k}\big(\Gamma_k(z)\big)=r_0
\end{equation}
uniformly in any fixed ball $\widehat B_R (0)$. Next we derive the equation for the limit function $\widehat w_\infty$ in Claim 3. We multiply by a smooth function $\varphi$ with compact support on the both sides of equation \eqref{eqBeforeLimitingEquation} and then do integrating by parts to obtain
\[
\langle\PP_{\widehat g_k}\widehat w_k,\varphi\rangle
=\int_{\mathbf R^4}\alpha_k\widetilde f_{\lambda_k}\circ\Gamma_ke^{4\widehat w_k}\varphi~ \dv_{\widehat g_k}+\int_{\mathbf R^4}\widetilde h_{k}\circ\Gamma_ke^{4\widehat w_k}\varphi~ \dv_{\widehat g_k}.
\]
 By the fact that $\widehat g_k \to (dz)^2$ in $C^\infty_{\rm loc}(\mathbf R^4)$ and the estimate of $\|h_k\|_{L^2(M,g_k)}$ in \eqref{pHkerror} we send $k$ to infinity in the equality above to conclude that the function $\widehat w_\infty$ solves the equation
\begin{equation}\label{eqLimitingEquationW}
\Delta^2_z\widehat w_\infty=r_0e^{4\widehat w_\infty}
\end{equation}
in $\mathbf R^4$. 

Since in this case we can obtain a very precise form for $w_\infty$ from \eqref{eqLimitingEquationW}, we need more work by showing that $r_0>0$. Indeed, suppose that this is not true, then we are led to two cases: $r_0=0$ or $r_0<0$. When $r_0=0$, it follows from \eqref{eqLimitingEquationW} and \eqref{volumeofhatWinftybd} that the function $\widehat w_\infty$ solves
\[
\Delta^2_z\widehat w_\infty=0
\]
with the finite energy condition
\[
\int_{\mathbf R^4}e^{4\widehat w_\infty} \dz <+\infty.
\]
Now it follows from \cite[Theorem 3]{Mar09} that $\widehat w_\infty$ is a polynomial of order exactly two, which is also bounded in $\mathbf R^4$. Consequently, $\widehat w_\infty$ is at most linear. Therefore, we can make use of \cite[Theorem 2.4]{ARS} to conclude that 
\[
\Delta_z\widehat w_\infty\equiv c_0>0
\] 
everywhere in $\mathbf R^4$. Using this fact, on one hand, the strong convergence $\Delta _{\widehat g_k} \widehat w_k \to \Delta_z \widehat w_\infty$ in $L^2_{\rm loc}(\mathbf R^4)$ implies, for arbitrary but fixed $R>0$, that
\[
 \lim_{k\rightarrow+\infty}\int_{\widehat B_{R/2}(0)}|\Delta_{\widehat g_k}\widehat w_k| \dv_{\widehat g_k}=\int_{ \widehat B_{R/2} (0) }|\Delta_z\widehat w_\infty| \dz=\frac{c_0}4 \pi^2 \Big( \frac R2 \Big)^4.
\]
However, on the other hand, we can estimate
\begin{align*}
\int_{\widehat B_{R/2}(0)} & |\Delta_{\widehat g_k}\widehat w_k| \dv_{\widehat g_k}\\
&= r_k^{-2}\int_{\widehat B_{Rr_k}(z_k)}|\Delta_{\widetilde g_0} \widetilde w_k| \dv_{\widetilde g_0} \\
&\leqslant r_k^{-2}\int_{B_{2Rr_k}(x_k)}|\Delta_{g_0}w_k| \dvg \\
 &=r_k^{-2} \int_{B_{2Rr_k}(x_k)} \int_M\big|\Delta_{g_0} \mathbb G(x,y)\big|\big|\alpha_kf_{\lambda_k}(y)+h_k\big|e^{4w_k(y)} \dvg (y) \dvg (x) \\
 &\leqslant C r_k^{-2}\int_M\big|\alpha_kf_{\lambda_k}(y)+h_k\big|e^{4w_k(y)}\int_{B_{2Rr_k}(x_k)}d(x,y)^{-2} \dvg (x) \dvg (y) \\
 &\leqslant CR^2\int_M\big|\alpha_kf_{\lambda_k}+h_k\big|e^{4w_k} \dvg \\
&=O(R^2).
 \end{align*}
Putting these facts together, we eventually obtain
\begin{equation}\label{Deltazhatwinfty}
\frac{c_0}4 \pi^2 \Big( \frac R2 \Big)^4 =O(R^2),
\end{equation}
which is impossible if we let $R$ sufficiently large. We now rule out the case $r_0<0$. Indeed, in this scenario, we apply \cite[Theorem 2]{Mar08} to \eqref{eqLimitingEquationW} to get 
\[
\lim_{t \to +\infty}\Delta\widehat w_\infty (t \xi)=c_1\neq0
\]
uniformly in $\xi \in K$ where $K\subset \mathbb S^3$ is any compact set with positive Hausdorff measure. Then, for $k$ large enough, we have the following estimates similar to the ones leading to \eqref{Deltazhatwinfty}
 \begin{align*}
 C \Big( \frac R2 \Big)^4 &\leqslant \int_{\widehat B_{R/2}(0) \cap (\mathbf R^+K)}|\Delta_z\widehat w_\infty| \dz \\
& \leqslant\lim_{k\rightarrow+\infty} \int_{\widehat B_{R/2}(0)}|\Delta_{\widehat g_k}\widehat w_k| \dv_{\widehat g_k}\\
& =O(R^2),
 \end{align*}
which, again, is a contradiction if $R$ is sufficiently large. Hence, we have proved that $r_0>0$. Since $e^{4\widehat w_\infty}\in L^1(\mathbf R^4)$ by \eqref{volumeofhatWinftybd}, the well-known classification theorem in \cite{Lin} then implies that either there exists a constant $c_0>0$ such that
\[
-\Delta_z\widehat w_\infty\geqslant c_0
\]
everywhere in $\mathbf R^4$ or there exist some $\mu_0>0$ and $z_0\in\mathbf R^4$ such that
\begin{equation}\label{hatWinfty}
\widehat w_\infty(z)=\log\Big(\frac{2\mu_0}{1+\mu_0^2|z-z_0|^2}\Big)-\frac14\log\frac{r_0}{6}.
\end{equation}
We can rule out the first alternative in the same way as \eqref{Deltazhatwinfty}. Hence, the second alternative must occur. Now, recall by Claim 3 that we have the strong convergence $\widehat w_k \rightharpoonup \widehat w_\infty$ in $C^{0,\alpha}_{\rm loc}(\mathbf R^4)\cap H^2_{\rm loc}(\mathbf R^4)$ for some $0<\alpha<1$. This together with the decomposition 
\begin{align*}
\PP_{\widehat g_k}(\widehat w_k-\widehat w_\infty)+(\PP_{\widehat g_k}-\Delta_z^2)\widehat w_\infty =& \widetilde{h}_k\circ\Gamma_ke^{4\widehat w_k}
+\big(\alpha_k\widetilde{f}_{\lambda_k}\circ\Gamma_k-r_0\big)e^{4\widehat w_k}\\
&+r_0(e^{4\widehat w_k}-e^{4\widehat w_\infty}),
\end{align*}
\eqref{pHkerror} and \eqref{limitofhatfk1} implies that $\widehat w_k \to \widehat w_\infty$ strongly in $H^4_{\rm loc}(\mathbf R^4)$. 

Up to this point, we are ready to estimate the number of blow-up points. Recall that we have already had $I \leqslant 8$, however, in the present case, we aim to show that indeed $I \leqslant 4$. Clearly at each blow-up point, say $x_0$ as before with the same notations used up to this position for simplicity, from the explicit formula \eqref{hatWinfty} we can compute
\[
 \int_{\mathbf R^4}e^{4\widehat w_\infty} \dz =\frac{6}{r_0}\int_{\mathbf R^4}\bigg(\frac{2\mu_0}{1+\mu_0^2|z-z_0|^2}\bigg)^4 \dz=\frac{16\pi^2}{r_0}.
\]
Since $e^{\widehat w_k} \to e^{4\widehat w_\infty}$ strongly in $L^1_{\rm loc}(\mathbf R^4)$ as $k \to +\infty$ and $r_0\leqslant64\pi^2$, we have for $R$ and $k$ sufficiently large
\begin{align*}
\frac{15}{64} &\leqslant \int_{\widehat B_{R}(0)} e^{4\widehat w_k} \dv_{\widehat g_k}\leqslant\int_{B_{2Rr_k}(x_k)}e^{4w_k}~\dvg.
\end{align*}
Since the number of blow-up points is finite, if we choose $k$ even larger, we deduce that the sets $B_{2Rr_k}(x_k)\subset B_\delta(x_0)$ and they are non-overlap at different blow-up points. Keep in my that $\int_M e^{4w_k } \dvg =1$. From this we deduce that the number of blow-up points must less than or equal to $4$, namely $I\leqslant 4$. 

Finally, we notice that, up to a translation and a scaling, $\widehat w_\infty$ has the form
$$\widehat w_\infty(z)=\log\Big(\frac{4\sqrt{6}}{4\sqrt{6}+|z|^2}\Big).$$
Indeed, it suffices to substituting \eqref{hatWinfty} into the expression
$$\widehat w^*_\infty(z):=\widehat w_\infty\Big(e^{-\widehat w_\infty(z_0)}r_0^{-1/4}z+z_0\Big)-\widehat w_\infty(z_0)-\frac14\log \frac{r_0}{6} .$$
We thus obtain the alternative (ii)(a) in Theorem \ref{main3}.

\medskip
\noindent\underline{Case 2}. We now suppose that $\limsup_{k \to +\infty}r_k/\sqrt{\lambda_k}>0$. Since $r_k/\sqrt{\lambda_k}$ is bounded from above and $|z_k|=O(\sqrt{\lambda_k})_{k\nearrow+\infty}$, we may assume that
\[
\limsup_{k \to +\infty}\frac{r_k}{\sqrt{\lambda_k}}=d_0>0
\]
and that
\[
\limsup_{k \to +\infty}\frac{z_k}{\sqrt{\lambda_k}}=\vec{c}_0
\]
for some constant vector $\vec{c}_0$. This together with \eqref{positivelbandubofalphaklambdak} and \eqref{widehatfk} implies that there exists some constant $r_0$ with $8\pi^2\leqslant r_0\leqslant 64\pi^2$ such that
\begin{equation*}\label{limitofhatfk}
 \limsup_{k \to +\infty}\widehat f_k(z)=r_0\Big(1+\frac12 \Hess_{f_0}(x_0)\big[\vec{c}_0+d_0z,\vec{c}_0+d_0z\big]\Big)
\end{equation*}
uniformly in $\widehat{B}_R(0)$. Arguing the same way as in the proof of Case 1 to obtain \eqref{eqLimitingEquationW}, the limiting function $\widehat w_\infty$ solves the equation
\begin{equation}\label{eqLimitingEquationW-slow}
\Delta^2_z\widehat w_\infty=r_0\Big(1+\frac12 \Hess_{f_0}(x_0)\big[\vec{c}_0+d_0z,\vec{c}_0+d_0z\big]\Big)e^{4\widehat w_\infty}
\end{equation}
in $\mathbf R^4$. Furthermore, in view of \eqref{volumeofhatWinftybd} and the $L^1$-bound \eqref{bdoftotalQcurvatureerror}, we have
\[
\int_{\mathbf R^4}e^{4\widehat w_\infty} \dz<+\infty
\]
and
\[
\int_{\mathbf R^4}\Big|1+\frac12\Hess_{f_0}(x_0)[\vec{c}_0+d_0z,\vec{c_0}+d_0z]\Big|e^{4\widehat w^*_\infty(z)} \dz <+\infty.
\]
By denoting
$$F_\infty=r_0\Big(1+\frac12\Hess_{f_0}(x_0)[\vec{c}_0+d_0z,\vec{c_0}+d_0z]\Big),$$
it follows from the decomposition
\begin{align*}
\PP_{\widehat g_k}(\widehat w_k-\widehat w_\infty)+(\PP_{\widehat g_k}-\Delta_z^2)\widehat w_\infty =&\big(\alpha_k\widetilde{f}_{\lambda_k}\circ\Gamma_k + \widetilde{h}_k\circ\Gamma_k -F_\infty \big) e^{4\widehat w_k(z)}\\
 &+F_\infty(e^{4\widehat w_k}-e^{4\widehat w_\infty})
\end{align*}
and \eqref{pHkerror} that $\widehat w_k \to \widehat w_\infty$ strongly in $H^4_{\rm loc}(\mathbf R^4)$ as $k \to +\infty$. 

Finally, by performing a translation and a scaling, equation \eqref{eqLimitingEquationW-slow} can be reduced as
$$\Delta^2_z\widehat w_\infty=\Big(1+\frac12 \Hess_{f_0}(x_0)\big[z,z\big]\Big)e^{4\widehat w_\infty}.$$
We thus obtain the alternative (ii)(b) in Theorem \ref{main3}. \qed

The proof of Theorem \ref{main3} is complete.
\end{proof}


\subsubsection{Degenerate case}

Now we consider the degenerate case. An analogue of Theorem \ref{main3} is the following result.

\begin{theorem}\label{main4}
Assume all the conditions, expcept for the assumption of the non-degeneracy of the function $f_0$ at some maxima, in Theorem \ref{main3} above. If, in addition, $(M,g_0)$ is locally conformally flat and $f_0$ satisfies the {\bf Condition A} with $d_0, A_0>0$, then for $w_k$ defined as in the Theorem \ref{main3} there exist suitable $I\in\mathbb{N}$ with $I \leqslant 8$, $r_k^{(i)}\searrow0$ and $x_k^{(i)} \to x_\infty^{(i)}\in M$ with $f_0(x_\infty^{(i)})=0$, $1\leqslant i\leqslant I$ such that the following hold
\begin{enumerate}[label=\rm (\roman*)]
 \item $w_k \to -\infty$ locally uniformly on
\[
M_\infty=M\backslash\{x_\infty^{(i)}:1\leqslant i\leqslant I\}.
\]
 \item For each $1\leqslant i\leqslant I$, we have
\[
\widehat w_k(z):=
\widetilde{w}_k\big(z_k^{(i)}+r_k^{(i)}z\big)+\log r_k^{(i)} \to \widehat w_\infty(z)
\]
strongly in $H_{\rm loc}^4(\mathbf R^4)$, where $z_k^{(i)}=\exp_{x_\infty^{(i)}}^{-1}(x_k^{(i)})$ and $\widehat w_\infty$ induces a metric
\[
g_\infty=e^{4\widehat w_\infty}g_{\mathbf R^4}
\]
on $\mathbf R^4$ of locally bounded curvature and of volume less than or equal $1$.
\end{enumerate}
\end{theorem}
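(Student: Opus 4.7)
The plan is to follow closely the four-step scheme carried out in the proof of Theorem \ref{main3}, with two systematic modifications: use the local conformal flatness of $(M,g_0)$ near each blow-up point to identify $\PP_{g_0}$ with the Euclidean bilaplacian $\Delta^2$ in a well-chosen conformal chart, and use Condition A in place of the quadratic Taylor expansion \eqref{f0expan} to control $f_0$ at a possibly degenerate maxima. Part (i) is handled exactly as in PART 1 of the proof of Theorem \ref{main3}: Lemma \ref{concentration0} already yields the finite set of concentration points $x_\infty^{(i)}$ with $f_0(x_\infty^{(i)}) = 0$ and $I \leqslant 8$, and the Adams-inequality argument for $v_k^{(+)}$ combined with the monotonicity $\overline w_k \leqslant 0$ then forces $w_k \to -\infty$ locally uniformly on $M_\infty$ without any use of non-degeneracy.

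For Part (ii), fix $x_0 = x_\infty^{(i)}$ and work in a flat conformal chart around $x_0$, so that $\PP_{g_0}$ becomes $\Delta^2$. Define $r_k$ and $x_k$ by the volume concentration procedure of Claim 1 in the proof of Theorem \ref{main3}, i.e.\ choose $\rho \in (0,\rho_0)$ small and let $r_k$ be the smallest radius for which $\sup_{y \in \overline{B_\delta(x_0)}} \int_{B_{r_k}(y)} e^{4w_k} \dvg = \rho$, with $x_k$ realizing the supremum. The proofs that such $r_k \searrow 0$, $x_k \to x_0$, and $w_k(x_k) \to +\infty$ use only Lemma \ref{concentration0} together with the total-curvature bound \eqref{bdoftotalQcurvatureerror}, both of which are still available. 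In contrast with the non-degenerate case, we have to give up the estimate $r_k = O(\sqrt{\lambda_k})$: the scale $r_k$ is simply whatever volume concentration dictates, and it is this loss that forces the weaker conclusion in Theorem \ref{main4}(ii) compared to Theorem \ref{main3}(ii).

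The decisive step is the analog of Claim 3: a uniform $W^{4,s_0}_{\rm loc}(\mathbf R^4)$ bound for the rescaled functions $\widehat w_k(z) = \widetilde w_k(z_k + r_k z) + \log r_k$. In the non-degenerate case this rested on the pointwise bound $\alpha_k |\widetilde f_{\lambda_k}(\Gamma_k(z))| \leqslant C_R$ on each $\widehat B_R(0)$ of \eqref{uniformbdofwidehatfk}, which is no longer available. The substitute, provided by Condition A, is a rotated cone $K_{x_k}$ with vertex at $x_k$ on which $|f_0| \geqslant A_0^{-1} |f_0(x_k)|$; combining this conical monotonicity with the $L^1$ bound \eqref{bdoftotalQcurvatureerror} and the volume concentration of $e^{4 w_k}$ near $x_k$ should yield an $L^1$ bound for $\alpha_k |\widetilde f_{\lambda_k} \circ \Gamma_k| e^{4 \widehat w_k}$ on each $\widehat B_R(0)$, which is what the iteration in Claim 3 actually uses. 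With this replacement in hand, the cut-off $\eta_R$ construction, the Poincar\'e inequality applied to $\widehat \Phi_k$, the $L^s$ bounds on the lower-order operators $L_k$ via Lemma \ref{Mal06Lemma2.3}, the concentration--compactness bootstrap of Proposition \ref{Mal06Proposition3.1} together with Remark \ref{Prop3.1}, and the Sobolev embedding into $C^{0,\alpha}_{\rm loc} \cap H^2_{\rm loc}$ all transfer with cosmetic changes.

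Passing to the limit then yields $\Delta^2 \widehat w_\infty = F_\infty e^{4 \widehat w_\infty}$ for some locally bounded $F_\infty$ obtained as a weak-$\ast$ limit of $\widehat f_k$, so the metric $\widehat g_\infty = e^{4\widehat w_\infty} g_{\mathbf R^4}$ has locally bounded $Q$-curvature; Fatou together with $\int_M e^{4 w_k} \dvg = 1$ gives total volume at most $1$, and the upgrade to $H^4_{\rm loc}(\mathbf R^4)$ convergence is achieved by the decomposition argument used at the end of Case 2 of Claim 4, exploiting the weak-$\ast$ convergence of $\widehat f_k$ and the $L^2$ smallness of $h_k$. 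The main obstacle I expect is the Condition A step: transporting the cone $K_{x_k}$ into the scaled coordinates $\Gamma_k$ and showing that the resulting $L^1$ mass suffices to push the concentration--compactness alternative into its exponential integrability branch, uniformly in the (possibly arbitrary) vanishing order of $f_0$ at $x_0$. Once this is done, the local conformal flatness assumption takes care of the remaining geometric error terms in $\PP_{g_0}$ under rescaling, and the rest of the argument is essentially mechanical.
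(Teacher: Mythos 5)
Your overall scheme (local conformal flatness turns $\Po$ into $\Delta^2$, Condition~A replaces the Taylor expansion of $f_0$, volume concentration defines $r_k$ and $x_k$, Part~(i) is unchanged) agrees with the paper, but the technical core of Part~(ii) is misidentified, and as written your route does not close.

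The issue is the claim that Condition~A should furnish a local $L^1$ bound for $\alpha_k|\widetilde f_{\lambda_k}\circ\Gamma_k|e^{4\widehat w_k}$ on $\widehat B_R(0)$ and that this is ``what the iteration in Claim~3 actually uses.'' Neither half is correct. First, the local $L^1$ bound is already free from the total-curvature estimate \eqref{bdoftotalQcurvatureerror} and needs no conical monotonicity; Condition~A buys strictly more, namely a \emph{pointwise} local bound on $\alpha_k\widetilde f_0\circ\Gamma_k$. Second, Claim~3 of Theorem~\ref{main3} does use the pointwise bound \eqref{uniformbdofwidehatfk} in an essential way: in \eqref{integralboundforhatQ+} to produce the smallness $\int_{\widehat B_r(z)}|\widehat f_k e^{4\widehat w_k}|\leqslant C_R\rho<8\pi^2$ required by Proposition~\ref{Mal06Proposition3.1}, and again in \eqref{hatQ+lsbound} to upgrade $\widehat f_k e^{4\widehat w_k}$ to $L^{s_0}$ with $s_0>1$. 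A plain $L^1$ bound on $\widehat B_R(0)$ gives neither the small-ball smallness (it says nothing about how the mass is distributed) nor the $L^{s_0}$ integrability, so porting that argument verbatim fails.

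What the paper actually does reverses the logical order. Before invoking Condition~A at all, it establishes (Claim~2) a $W^{3,s}_{\rm loc}$ bound and a local pointwise \emph{upper} bound $\widehat w_k\leqslant C_R$ by decomposing $\widehat w_k=\widehat w_k^{(+)}+\widehat w_k^{(-)}+\widehat w_k^{(0)}$ via Navier boundary value problems on $\widehat B_R(0)$ and using the crucial observation $\widehat f_k^+\leqslant\alpha_k\lambda_k+|\widetilde h_k\circ\Gamma_k|$, which is \emph{pointwise} bounded by $64\pi^2+o(1)$ thanks to \eqref{positivelbandubofalphaklambdak}; together with Lin's local exponential estimate, Martinazzi's interior estimate for biharmonic functions, the biharmonic mean-value property, and the weak Harnack inequality, this controls all three pieces. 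Only then does Condition~A enter (Claim~3), via a contradiction using the cones $K_{p_k}$ and Fatou against the limit measure $e^{4\widehat w_\infty}\dz$ obtained from Claim~2, to conclude that $\alpha_k\widetilde f_0\circ\Gamma_k$ is locally \emph{pointwise} bounded. Finally (Claim~4), the pointwise bound on $\widehat f_k$ together with the local upper bound $\widehat w_k\leqslant C_R$ yields $\widehat f_k e^{4\widehat w_k}\in L^2_{\rm loc}$ uniformly, hence the $H^4_{\rm loc}$ bound. You would need to supply both the Navier decomposition argument (which is missing from your sketch and cannot be replaced by the $\eta_R$/Poincar\'e iteration of Theorem~\ref{main3}) and the correct, pointwise, role of Condition~A.
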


\begin{proof}
For simplicity and clarity, we still use the notations in the proof of Theorem \ref{main3}. We first notice that Lemma \ref{concentration0} and the upper bound for $\int_M |Q_{g_k}| \dv_{g_k}$ as in \eqref{bdoftotalQcurvatureerror} continue to hold even if $f_0(x)$ has a degenerate maxima. Consequently, the bounds for $\alpha_k\lambda_k$ as in \eqref{positivelbandubofalphaklambdak} also holds as well.
 
 \medskip
 \noindent\textbf{PART 1}. The proof of statement (i) in Theorem \ref{main4} is then identical with that of the corresponding statement in Theorem \ref{main3}.

 \medskip
 
 \noindent\textbf{PART 2}. We now examine the blow-up behavior of $w_k$ near the blow-up point $x_0$. Since $(M,g_0)$ is locally comformally flat, we may assume that $M$ is flat around $x_0$, namely
\[
(g_0)_{ij}=\delta_{ij}
\]
in $B_\delta(x_0)$ for some fixed but small $\delta>0$. 

\medskip
\noindent\textbf{Claim 1}. There is a constant $\rho_0>0$ such that for each $\rho\in(0,\rho_0)$ to be determined later, there exists a sequence of positive numbers $(r_k)_k$ and a sequence of points $(x_k)_k\subset\overline{B_\delta(x_0)}$ satisfying
\begin{subequations}\label{volumeconcentration4}
\begin{empheq}[left=\empheqlbrace]{align}
\label{r_klambda_k-Degenerate}
&\lim_{k \to +\infty}r_k=0,\\
\label{rho_k-Degenerate}
&\int_{B_{r_k}(x_k)}e^{4w_k} \dvg =\rho,\\
\label{x_k->w_k->-Degenerate} 
&x_k \rightarrow x_0\quad\mbox{and}\quad w_k(x_k)\rightarrow+\infty ~~\mbox{as}~k\rightarrow\infty,\\
\label{integralw_k<=rho-Degenerate}
&\int_{B_{r_k}(y)}e^{4w_k} \dvg \leqslant\rho \quad \text{ for all } \quad {y\in B_{\sqrt{r_k}}(x_k)}.
\end{empheq}
\end{subequations}

\noindent{\itshape Proof of Claim 1}. The proof of \eqref{volumeconcentration4} is essentially similar to the proof of \eqref{volumelowebd2}. Notice that in the degenerate case we cannot assert an upper bound for $r_k/\sqrt{\lambda_k}$ as shown in \eqref{r_klambda_k}. However, we still have the estimate $r_k = o(1)_{k \nearrow +\infty}$ shown in \eqref{r_klambda_k-Degenerate}. To realize this, we first notice by Lemma \ref{concentration0}, the estimate $Q_{g_k}^+\leqslant\alpha_k\lambda_k+|h_k|$, \eqref{positivelbandubofalphaklambdak} and \eqref{pHkerror} that
\begin{equation}
 \label{volumelowebd3}
 8\pi^2-o(1)_{k \nearrow +\infty}\leqslant\int_{B_r(x_0)}Q_{g_k}^+e^{4w_k}~\dvg\leqslant (64\pi^2+o(1)_{k \nearrow +\infty})\int_{B_r(x_0)}e^{4w_k}~\dvg
\end{equation}
for all $r>0$. In particular, we have 
\[
\int_{B_\delta(x_0)}e^{4w_k}~\dvg\geqslant\frac{7}{65}=:\rho_0
\]
for $k$ large. With the help of the above estimate, we can follow the proof of Claim 1 in Theorem \ref{main3} to obtain that for each $\rho\in(0,\rho_0)$ there exists $r_k\in(0,\delta)$ such that
$$\sup_{x\in\overline{B_\delta(x_0)}}\int_{B_{r_k}(x)}e^{4w_k}~\dvg=\rho.$$
This implies that
\begin{equation}
 \label{volumeupperbd}
 \int_{B_{r_k}(x_0)}e^{4w_k}~\dvg\leqslant\rho.
\end{equation}
We are now ready to conclude \eqref{r_klambda_k-Degenerate}. Indeed, by the way of contradiction and up to a subsequence, we may assume that $\lim_{k\rightarrow+\infty}r_k=r_0>0$. Then, there holds $B_{r_0/2}(x_0)\subset B_{r_k}(x_0)$ provided $k$ is sufficiently large. This together with \eqref{volumeupperbd} and the choice of $\rho$
 yields 
 $$\int_{B_{r_0/2}(x_0)}e^{4w_k}~\dvg\leqslant\rho<\frac{7}{65}.$$
But this contradicts with \eqref{volumelowebd3} and the proof of \eqref{r_klambda_k-Degenerate} is complete. As for the other assertions in \eqref{volumeconcentration4}, their proofs are identical with those of Claim 1 in Theorem \ref{main3}.
\qed

Lacking of a bound for $r_k/\sqrt{\lambda_k}$ brings us difficulty to obtain a local bound for $ \alpha_k\widetilde f_{\lambda_k} \circ\Gamma_k$ as in \eqref{uniformbdofwidehatfk}. However, under an additional hypothesis on the flatness of $(M, g_0)$ we can proceed with some tools developed in \cite{Mar09} together with \textbf{Condition A} to regain its local boundedness; see Claim 3 below.

Now, since $(M,g_0)$ is locally comformally flat, we may assume that $M$ is flat around $x_0$, namely
\[
(g_0)_{ij}=\delta_{ij}
\]
in $B_\delta(x_0)$ for some fixed but small $\delta>0$. On one hand, this helps us to conclude that
\begin{equation*}\label{PullbackOfVolumeForm}
\dv_{\widetilde g_0} = \exp_{x_0}^* (\dvg) = \dz.
\end{equation*}
This and the relation $\widehat g_k = r_k^{-2} \Gamma_k^* \widetilde g_0$ imply that
\[
\dv_{\widehat g_k} = r_k^{-4}\dv_{ \Gamma_k^* \widetilde g_0} = \dz.
\]
On the other hand, the Paneitz operator becomes the bi-Laplace operator in $B_\delta(x_0)$. The equation \eqref{pQeerror} becomes 
\[
\Delta^2w_k=\alpha_kf_{\lambda_k}e^{4w_k}+h_ke^{4w_k}
\]
in $B_\delta(x_0)$. Let $\widehat w_k$ be defined as in \eqref{eqWHat_k}, then $\widehat w_k$ solves
\begin{equation}\label{ConformlFlatEq}
 \Delta^2\widehat w_k=\widehat f_ke^{4\widehat w_k}
\end{equation}
in $\widehat D_{k,\delta}$, where, as before,
\[
 \widehat f_k=\alpha_k\widetilde{f}_{\lambda_k} \circ \Gamma_k +\widetilde{h}_k\circ\Gamma_k
\]
and
\[
\widehat D_{k,\delta} := \{ z \in \mathbf R^4 : |z_k + r_k z| < \delta \}. 
\]
Also because $\widehat B_{1/2} (0) \subset (\exp_{x_0}\circ\Gamma_k)^{-1}(B_{r_k}(x_k)) \subset \widehat B_2 (0)$ it follows from \eqref{rho_k-Degenerate} that
\begin{equation}\label{volumeconcentration5}
\int_{\widehat B_{1/2} (0)}e^{4\widehat w_k} \dz
 \leqslant \rho \leqslant 
 \int_{\widehat B_2 (0)}e^{4\widehat w_k} \dz
 \end{equation}
and, similar to \eqref{volumeconcentration3}, we rewrite \eqref{integralw_k<=rho-Degenerate} to get
\begin{equation}\label{volumeconcentration6}
 \int_{\widehat B_{1/2}(z)}e^{4\widehat w_k} \dz \leqslant\rho,
\end{equation}
for all $z\in\widehat B_{1/(2\sqrt{r_k})}(0)$. 

\medskip
\noindent\textbf{Claim 2}. The sequence $\widehat w_k$ is bounded in $W^{3,s}_{\rm loc} (\mathbf R^4)$ for any $1< s < 4/3$.

\noindent{\itshape Proof of Claim 2}. Fix any $R>8$, we let $\widehat w_k^{(\pm)}$ solve
\begin{equation}\label{wkhatpm}
\left\{
\begin{aligned}
 \Delta^2\widehat w_k^{(\pm)}&=(\Delta^2\widehat w_k)^{\pm} &\text{ in } & \widehat B_R(0) ,\\ 
 \widehat w_k^{(\pm)}&=0 &\text{ on } &\partial\widehat B_R (0),\\
 \Delta\widehat w_k^{(\pm)}& =0 &\text{ on } &\partial\widehat B_R (0).
\end{aligned}
\right.
\end{equation}
Using the maximum principle twice, we obtain $\widehat w_k^{(+)}\geqslant 0\geqslant \widehat w_k^{(-)}$. In addition, $\widehat w_k$ can be decomposed as 
\begin{equation}\label{wkhatdecomp}
\widehat w_k=\widehat w_k^{(+)}+\widehat w_k^{(-)}+\widehat w_k^{(0)},
\end{equation}
where $\widehat w_k^{(0)}$ solves
\[
\left\{
\begin{aligned}
 \Delta^2\widehat w_k^{(0)} & =0 &\text{ in }& \widehat B_R (0),\\ 
 \widehat w_k^{(0)}& =\widehat w_k &\text{ on }& \partial\widehat B_R (0),\\
 \Delta\widehat w_k^{(0)}&=\Delta\widehat w_k &\text{ on }& \partial\widehat B_R (0).
\end{aligned}
\right.
\]
The next goal is to show the boundedness of $\widehat w_k^{(+)}$ 
in $W^{4,s_0}_{\rm loc}(\mathbf R^4)$
for some $s_0>1$. To see this, we observe, by \eqref{positivelbandubofalphaklambdak}, \eqref{volumeconcentration6} and \eqref{pHkerror}, the bound
\begin{align*}
 \int_{\widehat B_r(z)}\big(\Delta^2\widehat w_k\big)^+ \dz
 &= \int_{\widehat B_r(z)}\widehat f_k^+e^{4\widehat w_k} \dz\\
 &\leqslant \int_{\widehat B_r(z)} \alpha_k\big(\widetilde f_k\circ\Gamma_k\big)^+e^{4\widehat w_k} \dz +\int_{\widehat B_r(z)}\big|\widetilde h_k\circ\Gamma_k\big|e^{4\widehat w_k} \dz\\
 &\leqslant 65\pi^2\rho+o(1)_{k \nearrow +\infty}
\end{align*}
for all $z\in\widehat B_{R/2} (0)$ and for $r>0$ small. Hence, by choosing $\rho$ sufficiently small we obtain the bound
\[
\int_{\widehat B_r(z)}\big(\Delta^2\widehat w_k\big)^+\dz <8\pi^2
\]
for all $z\in\widehat B_{R/2} (0)$ and for $r>0$ small. In view of the equation \eqref{wkhatpm} satisfied by $\widehat w_k^{(\pm)}$, we can apply \cite[Lemma 2.3]{Lin} and a finite covering argument to find a positive constant $s_1>1$ such that
\begin{equation}
 \label{ewkhatpL4s1bd}
 \int_{\widehat B_{R/2} (0)}e^{4s_1\widehat w_k^{(+)}} \dz \leqslant C_R.
\end{equation}
Keep in mind that $\widehat f_k^+ \leqslant \alpha_k \lambda_k + |\widetilde{h}_k\circ\Gamma_k|$. Hence, by repeating an argument used in \eqref{hatQ+lsbound} together with \eqref{ewkhatpL4s1bd} we can find some $1<s_0<\min\{4/3,s_1\}$ such that
\begin{align*}
\int_{\widehat B_{R/2} (0)} &\big(\widehat f_k^+e^{4\widehat w_k}\big)^{s_0} \dz \\
&\leqslant C\int_{\widehat B_R (0)}(\alpha_k\lambda_k)^{s_0}e^{4s_0\widehat w_k} \dz +C\int_{\widehat B_{R}(0)}|\widetilde{h}_k\circ\Gamma_k|^{s_0}e^{4s_0\widehat{w}_k} \dz\leqslant C_R.
\end{align*}
Plugging the estimate above into \eqref{wkhatpm} gives
\[
\int_{\widehat B_{R/2}(0)}|\Delta^2\widehat w_k^{(+)}|^{s_0} \dz \leqslant C_R.
\]
This together with Sobolev's inequality implies that $\widehat w_k^{(+)}$ is bounded in $W^{4,s_0}_{\rm loc}(\mathbf R^4)$. Therefore, $\widehat w_k^{(+)}$ is bounded in $C^{0,\alpha}_{\rm loc}(\mathbf R^4)$ for some $0<\alpha\leqslant1-1/s_0$. Moreover, we let $\gamma=1/17$. It then follows from \eqref{bdoftotalQcurvatureerror} that
\begin{align*}
 \gamma\int_{\widehat B_R (0)}|\Delta^2\widehat w_k| \dz
 &\leqslant \gamma\int_{\widehat B_R (0)}\big|\widehat f_k\big|e^{4\widehat w_k} \dz
\leqslant \gamma\int_M|Q_{g_k}| \dvg <8\pi^2.
\end{align*}
Then repeating the previous argument we have
\begin{equation}
 \label{ewidehatwkmL4s1bd}
 \int_{\widehat B_{R/2}(0)}e^{\pm4s_1\gamma\widehat w_k^{(-)}} \dz \leqslant C_R.
\end{equation}
Also, there holds
\begin{equation}
\label{widehatwkpmw3sbd}
\big\|\widehat w_k^{(\pm)}\big\|_{W^{3,s}(\widehat B_R(0))}\leqslant C_R
\end{equation}
for all $s\in[1,4/3)$. Now, it follows from Jensen's inequality, the decomposition of $\widehat w_k$ in \eqref{wkhatdecomp}, H\"older's inequality, and \eqref{ewidehatwkmL4s1bd} that
\begin{equation}\label{wkhatzerointupbd}
\begin{aligned}
\exp\Big(\dashint_{\widehat B_r(z)}\widehat w_k^{(0)} \dz \Big)
&\leqslant \bigg(\dashint_{\widehat B_r(z)} e^{\frac{4s_1\gamma}{1+s_1\gamma}\widehat w_k^{(0)}} \dz \bigg)^\frac{1+s_1\gamma}{4s_1\gamma} \\
&\leqslant \bigg(\dashint_{\widehat B_r(z)} e^{\frac{4s_1\gamma}{1+s_1\gamma}(\widehat w_k-\widehat w_k^{(-)})} \dz \bigg)^\frac{1+s_1\gamma}{4s_1\gamma} \\
&\leqslant \bigg(\dashint_{\widehat B_r(z)} e^{4\widehat w_k} \dz \bigg)^\frac{(1+s_1\gamma)^2}{4(s_1\gamma)^2}\bigg(\dashint_{\widehat B_r(z)} e^{-4s_1\gamma\widehat w_k^{(-)}} \dz \bigg)^\frac{1}{4s_1\gamma} \\
&\leqslant C_R
\end{aligned}
\end{equation}
for all $z\in\widehat B_{R/4}(0)$ and for $r>0$ small. In \eqref{wkhatzerointupbd}, the symbol $\dashint_\Omega h $ denotes the average of $h$ over $\Omega$. Notice that the estimate \eqref{hatwkw3sbound} also holds in the current case. This together with \eqref{widehatwkpmw3sbd} implies that
\begin{align*}
\|\Delta\widehat w_k^{(0)}\|_{L^1(\widehat B_R(0))}&\leqslant \|\Delta\widehat w_k\|_{L^1(\widehat B_R(0))}+\|\Delta\widehat w_k^{(+)}\|_{L^1(\widehat B_R(0))} +\|\Delta\widehat w_k^{(-)}\|_{L^1(\widehat B_R(0))}\leqslant C_R.
\end{align*}
Since $\Delta(\Delta \widehat w_k^{(0)})=0$, we can apply \cite[Proposition 11]{Mar09} to get 
\begin{equation}\label{Deltawidehatwk0bd}
 \|\Delta\widehat w_k^{(0)}\|_{C^l(\widehat B_{R/2}(0))}\leqslant C_R(l)
\end{equation}
for every $l\in\mathbb N$. Notice that by the mean value property for biharmonic functions, see \cite[Lemma 2.2]{ARS}, we have
\[
\widehat w_k^{(0)}(z) = \dashint_{\widehat B_r(z)}\widehat w_k^{(0)} \dz
+\frac{r^2}{12}\Delta\widehat w_k^{(0)}(z).
\]
This together with \eqref{wkhatzerointupbd} and \eqref{Deltawidehatwk0bd} implies that
\[
\widehat w_k^{(0)}(z)\leqslant C_R
\]
for all $z\in\widehat B_{R/4} (0)$. In view of \eqref{Deltawidehatwk0bd}, we may apply weak Hanack inequality, see \cite[Theorem 8.18]{GT98}, to the function $C_R-\widehat w_k^{(0)}$ to obtain that 
\begin{itemize}
 \item either $\widehat w_k^{(0)}$ uniformly converges to $-\infty$ on $\widehat B_{R/4} (0)$
 \item or $\|\widehat w_k^{(0)}\|_{L^1(\widehat B_{R/2} (0))}\leqslant C_R$. 
\end{itemize} 
If the first case occurs, then from the decomposition of $\widehat w_k$ in \eqref{wkhatdecomp} and the boundedness of $\widehat w_k^{(+)}$ in $C^{0,\alpha}_{\rm loc}(\mathbf R^4)$, we know that
\[
\widehat w_k\leqslant C_R+w_k^{(0)},
\] 
which immediately implies that $\widehat w_k \to -\infty$ uniformly on $\widehat B_{R/4} (0)$ as $k \to \infty$. From this we deduce that
\[
\int_{\widehat B_{R/4} (0)}e^{4\widehat w_k} \dz \to 0
\]
as $k \to +\infty$, which contradicts \eqref{volumeconcentration5} since we have chosen $R>8$. Hence, we must have
\[
\|\widehat w_k^{(0)}\|_{L^1(\widehat B_{R/2} (0))}\leqslant C_R.
\] 
We then apply \cite[Proposition 11]{Mar09} again to get 
\[
 \|\widehat w_k^{(0)}\|_{C^l(\widehat B_{R/4} (0))}\leqslant C_R(l)
\]
for every $l\in\mathbb{N}$ and
\[
 \|\widehat w_k^{(0)}\|_{W^{3,s}(\widehat B_{R/4} (0))}\leqslant C_R(s)
\]
for every $1<s<4/3$. Clearly, the estimate of $\widehat w_k^{(0)}$ in $C^l (\widehat B_{R/4}(0))$ above together with the decomposition of $\widehat w_k$ in \eqref{wkhatdecomp} implies that
\begin{equation}\label{UpperBoundWidehatW_k}
\widehat w_k\leqslant C_R
\end{equation}
in $\widehat B_{R/4}(0)$. Moreover, the estimate of $\widehat w_k^{(0)}$ in $W^{3,s}(\widehat B_{R/4} (0))$ together with \eqref{widehatwkpmw3sbd} tells us that $\widehat w_k$ is bounded in $W^{3,s}(\widehat B_{R/4} (0))$. Since $R$ is arbitrary, the sequence $\widehat w_k$ is bounded in $W^{3,s}_{\rm loc}(\mathbf R^4)$ for any $1<s<4/3$. \qed 

From Claim 2, up to a subsequence, there holds
 \[
 \widehat w_k\rightharpoonup\widehat w_\infty
 \]
 weakly in $W^{3,s}_{\rm loc}(\mathbf R^4)$ for some $1<s<4/3$ and almost everywhere on $\mathbf R^4$. By Fatou's lemma and \eqref{eqExponentialMap2}, we can deduce that
 \begin{align*}
 \int_{\widehat B_{R/2}(0)}e^{4\widehat w_\infty} \dz
& \leqslant \liminf_{k \to \infty}\int_{\widehat B_{R/2}(0)}e^{4\widehat w_k} \dz \\
& = \liminf_{k \to \infty}\int_{\widehat B_{Rr_k/2}(z_k)}e^{4\widetilde w_k} \dz \\
& \leqslant \liminf_{k \to \infty}\int_{B_{Rr_k}(x_k)} e^{4w_k } \dvg\\
&\leqslant1 .
 \end{align*}
 Passing to the limit as $R \to +\infty$ we find that $e^{4\widehat w_\infty}\in L^1(\mathbf R^4)$ with
 \begin{equation*}
 \int_{\mathbf R^4}e^{\widehat w_\infty} \dz=\lim_{R \to +\infty}\int_{\widehat B_{R/2} (0)}e^{4\widehat w_\infty} \dz\leqslant1.
 \end{equation*}
Now, recall $ \alpha_k\widetilde{f}_{\lambda_k} \circ \Gamma_k =\alpha_k\lambda_k+\alpha_k\widetilde{f}_0 \circ \Gamma_k$ and for simplicity, we denote
 \[
 \widehat f_{0k} = \alpha_k\widetilde{f}_0 \circ \Gamma_k ,
 \]
 which is non-positive. By \eqref{positivelbandubofalphaklambdak}, we may assume, up to a subsequence, that 
 \[
 \alpha_k\lambda_k \to \mu \in [8\pi^2, 64 \pi^2]
 \]
 as $k \to +\infty$. 
 
 \medskip
\noindent\textbf{Claim 3}. The sequence $ \alpha_k\widetilde{f}_0 \circ \Gamma_k$ is locally bounded (from below). 

\noindent{\itshape Proof of Claim 3}. Suppose that for some sequence $y_k \to y_0$ in $\mathbf R^4$ there holds
\[
\alpha_k|\widetilde{f}_0(\bar{z}_k)| \to +\infty
\]
as $k \to +\infty$, where
\[
\bar{z}_k=\Gamma_k (y_k) = z_k+r_ky_k.
\] 
Denote $p_k=\exp_{x_0}(\bar{z}_k)$. Because $\bar z_k \to 0$ as $k \to +\infty$, we then have $p_k \to x_0 \in M_0$ as $k \to +\infty$. From this we may assume from the beginning that $d(p_k)<d_0$. By Condition A, there exist some $A_0>0$ and a sequence of cones $K_{p_k}$ with vertex at $p_k$ such that
 \begin{equation}\label{inffk}
 \begin{split}
 A_0\inf_{y\in\widetilde{K}_{p_k}}\big| \alpha_k\widetilde{f}_0 (\Gamma_k (y))\big| & =A_0\alpha_k\inf_{z\in K_{p_k}}\big|\widetilde{f}_0(z) \big|
 \geqslant \alpha_k\big|\widetilde{f}_0(\bar{z}_k)\big| \to +\infty,
 \end{split}
 \end{equation}
where with a suitable labeling of coordinates 
 \begin{align*}
 \widetilde{K}_{p_k} &=\Gamma_k^{-1} (K_{p_k}) = \big\{z :z_k+r_k z \in K_{p_k}\big\}\\
 &=\Big\{y=(y^1,...,y^4) : \sqrt{\sum_{i=1}^3(y^i-y_k^i)^2}<y^4-y_k^4, \quad |y-y_k|<d_0/r_k\Big\}.
 \end{align*}
On the other hand, by the estimate $\alpha_k|\widetilde{f}_0 \circ \Gamma_k| = \alpha_k\lambda_k-\alpha_k \widetilde f_{\lambda_k} \circ \Gamma_k$ and the fact that $\int_Mf_{\lambda_k}e^{4u_k} \dvg =0$, as routine we can apply Fatou's lemma to get that
 \begin{equation}\label{fkupbd}
 \begin{split}
 \int_{\widehat B_{R/2}(0)}\liminf_{k \to +\infty} \big( \alpha_k|\widetilde{f}_0 \circ \Gamma_k| \big) e^{4\widehat w_\infty} \dz 
 & \leqslant \liminf_{k \to +\infty}\int_{\widehat B_{R/2} (0)} \alpha_k|\widetilde{f}_0 \circ \Gamma_k| e^{4\widehat w_\infty} \dz \\
 & \leqslant \liminf_{k \to +\infty}\int_{\widehat B_{Rr_k/2} (z_k)} (\alpha_k\lambda_k-\alpha_k \widetilde f_{\lambda_k})e^{4\widetilde w_k} \dz \\
 & = \liminf_{k \to +\infty}\int_{B_{Rr_k} (x_k)} (\alpha_k\lambda_k-\alpha_k f_{\lambda_k})e^{4w_k} \dvg \\
 &\leqslant\mu.
 \end{split}
 \end{equation}
 We thus obtain the contradiction from \eqref{inffk} and \eqref{fkupbd}, namely, the sequence $\widehat f_{0k}$ is locally bounded. \qed
 
We now make use of Claim 3 together with the local upper bound of $\widehat w_k$ in \eqref{UpperBoundWidehatW_k} to ensure, up to a subsequence, that
\[
\alpha_k ( \widetilde{f}_0 \circ \Gamma_k ) e^{4\widehat w_k} 
\stackrel{\ast}{\rightharpoonup} \widehat f_\infty e^{4\widehat w_\infty}
\]
weakly-* in the sense of measures, where $\widehat f_\infty\leqslant0$ is locally bounded from below. By setting
\[
F_\infty= \mu+\widehat f_\infty 
\]
and recall the definition of $\widehat f_k$ and $\mu$ we know that
\[
\widehat f_k e^{4\widehat w_k}
\stackrel{\ast}{\rightharpoonup} F_\infty e^{4\widehat w_\infty} 
\]
weakly-* in the sense of measures. Furthermore, we get from \eqref{bdoftotalQcurvatureerror} the following bound
 \[
 \int_{\mathbf R^4}|F_\infty| e^{4\widehat{u}_\infty} \dz \leqslant 2\mu\leqslant128\pi^2.
 \] 

\medskip
\noindent\textbf{Claim 4}. The sequence $\widehat w_k$ is bounded in $H^{4}_{\rm loc} (\mathbf R^4)$.

\noindent{\itshape Proof of Claim 4}. By repeating the estimate in \eqref{integralboundforhatQ+}, it follows from the local boundedness of $\alpha_k \widetilde f_{\lambda_k} \circ \Gamma_k$ established in Claim 3, the local upper boundedness of $\widehat w_k$ in \eqref{UpperBoundWidehatW_k}, and the smallness of $\|h_k\|_{L^2(M,g_k)}$ in \eqref{pHkerror} that
 \begin{align*}
 \int_{ \widehat B_{R/4} (0)}\big|\widehat f_ke^{4\widehat{w}_k}\big|^2 \dz &\leqslant C\int_{\widehat B_{R/4}(0)}\big|\alpha_k\widetilde{f}_{\lambda_k} \circ \Gamma_k\big|^2e^{8\widehat{w}_k} \dz+C\int_{\widehat B_{R/2} (0)}|\widetilde{h}_k\circ\Gamma_k|^2e^{8\widehat{w}_k} \dz \\
 &\leqslant C_R+C_R\int_{\widehat B_{R/2} (0)}|\widetilde{h}_k\circ\Gamma_k|^2e^{4\widehat{w}_k} \dz \\
&\leqslant C_R+C_R\|h_k\|^2_{L^2(M,g_k)}\leqslant C_R.
\end{align*}
This together with the equation satisfied by $\widehat w_k$ in \eqref{ConformlFlatEq} implies that $\widehat w_k$ is bounded in $H^{4}_{\rm loc}(\mathbf R^4)$. \qed

In view of Claim 4, we have that
\[
 \widehat w_k \rightharpoonup \widehat w_\infty
 \]
 weakly in $H^4_{\rm loc}(\mathbf R^4)$ and strongly in $C^{0,\alpha}_{\rm loc}(\mathbf R^4)$ for some $0<\alpha<1/2$. Moreover, by passing to the limit we deduce that $\widehat w_\infty$ solves the equation
 \[
 \Delta^2\widehat w_\infty=F_\infty e^{4\widehat{u}_\infty}
 \]
in $\mathbf R^4$. Finally, it follows from the decomposition
\begin{align*}
\Delta^2\widehat w_k-\Delta^2\widehat w_\infty&
= \big(\alpha_k\widetilde{f}_{\lambda_k}\circ\Gamma_k + \widetilde{h}_k\circ\Gamma_k -F_\infty\big)e^{4\widehat w_k} 
+F_\infty(e^{4\widehat w_k}-e^{4\widehat w_\infty})
\end{align*}
that $\widehat w_k \to \widehat w_\infty$ strongly in $H^4_{\rm loc}(\mathbf R^4)$.

\end{proof}


\section{Bubbling along the flow}

As in the case of Gaussian curvature flow studied by Struwe, it is unreasonable to expect that Theorem also holds for non-minimizing critical points 
\subsection{Bounds for total curvature along the flow}

Bounds analogue to Lemma \ref{bdoftotalQcurvature} can also be obtained for the solutions to the prescribed $Q$-curvature flow \eqref{eeforu} for $f_\lambda$. 

As in the static case, let $f_0 \leqslant 0$ be a smooth, non-constant function with $\max_M f_0 =0$. Let $0<\lambda<\lambda_0$ and let $f_\lambda = f_0 + \lambda$ as above where $\lambda_0 > 0$ is chosen in such a way that $f_{\lambda_0}$ changes sign and satisfies \eqref{eqKWTotalIntegralIsNegative}, namely $\int_M f_{\lambda_0} \dvg< 0$. For any $0<\lambda<\lambda_0$ and any $\sigma\in(-\sigma_0,0)$, where the number $\sigma_0=\sigma_0(\lambda)$ will be determined in Lemma \ref{bdoftotalQcurvature1} below, we choose $u_{0\lambda}^\sigma\in X_{f_\lambda}^*$ such that
\begin{equation}\label{initialenergy}
 \mathscr{E}(u_{0\lambda}^\sigma)\leqslant\beta_\lambda+\sigma^2,
\end{equation}
where, as in \eqref{betalambda}, we set
\[
\beta_\lambda = \min\big\{\mathscr{E}(u):u\in X^*_{f_\lambda}\big\}.
\]
For such an initial data $u_{0\lambda}^\sigma$, it follows from Theorem \ref{NgoZh} that the flow \eqref{eeforu} possesses the smooth solution
\[
u_\lambda^\sigma=u_\lambda^\sigma(t)
\]
with $\alpha_\lambda^\sigma=\alpha_\lambda^\sigma(t)$. We also let $g_\lambda^\sigma=e^{2u_\lambda^\sigma}g_0$. 

First, we establish the following simple result.

\begin{lemma}\label{lemBoundIntegralOfExpAlongFlow}
For any real number $\alpha$, there exists a constant $\Cscr_B>0$ independent of $\alpha$ and time such that
\[
\int_M e^{\alpha u_\lambda^\sigma} \dvg < \Cscr_B,
\]
where $u_\lambda^\sigma$ is a solution to the flow \eqref{eeforu} with the initial data $u_{0\lambda}^\sigma$ satisfying \eqref{initialenergy}.
\end{lemma}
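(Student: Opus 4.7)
The plan is to decompose $u_\lambda^\sigma = (u_\lambda^\sigma - \overline{u_\lambda^\sigma}) + \overline{u_\lambda^\sigma}$ and invoke the Adams-type inequality \eqref{TrudingerInequality} applied to the oscillation $u_\lambda^\sigma - \overline{u_\lambda^\sigma}$, so that the lemma reduces to obtaining time-independent bounds on both the energy $\mathscr{E}(u_\lambda^\sigma(t))$ and the mean $\overline{u_\lambda^\sigma(t)}$.

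First I would establish that $t \mapsto \mathscr{E}(u_\lambda^\sigma(t))$ is non-increasing along the flow. Differentiating $\mathscr{E}(u_\lambda^\sigma) = 2\langle \Po u_\lambda^\sigma, u_\lambda^\sigma\rangle$, using \eqref{eeforu}, the relation $\Po u_\lambda^\sigma = Q_{g_\lambda^\sigma} e^{4u_\lambda^\sigma}$, and the defining choice of $\alpha_\lambda^\sigma$ (which gives $\int_M f_\lambda Q_{g_\lambda^\sigma}\dv_{g_\lambda^\sigma} = \alpha_\lambda^\sigma \int_M f_\lambda^2 \dv_{g_\lambda^\sigma}$), a direct computation yields
\[
\frac{d}{dt}\mathscr{E}(u_\lambda^\sigma) \;=\; 4\langle \Po u_\lambda^\sigma, (u_\lambda^\sigma)_t\rangle \;=\; -4\int_M \big(Q_{g_\lambda^\sigma} - \alpha_\lambda^\sigma f_\lambda\big)^2 \dv_{g_\lambda^\sigma} \;\leqslant\; 0,
\]
so that by the initial bound \eqref{initialenergy} one has $\mathscr{E}(u_\lambda^\sigma(t)) \leqslant \beta_\lambda + \sigma^2$ uniformly in $t \geqslant 0$. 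Next, since $u_\lambda^\sigma(t) \in X_{f_\lambda}^*$ by \eqref{FlowRemainInSpace}, the normalization $\int_M e^{4u_\lambda^\sigma}\dvg = 1$ combined with Jensen's inequality yields $\overline{u_\lambda^\sigma} \leqslant 0$, while the identity
\[
e^{-4\overline{u_\lambda^\sigma}} \;=\; \int_M e^{4(u_\lambda^\sigma - \overline{u_\lambda^\sigma})}\dvg
\]
together with \eqref{TrudingerInequality} applied with $\alpha = 4$ and the energy bound above produces a lower bound on $\overline{u_\lambda^\sigma}$ that is also uniform in $t$.

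Finally, for any real $\alpha$ I split
\[
\int_M e^{\alpha u_\lambda^\sigma}\dvg \;=\; e^{\alpha \overline{u_\lambda^\sigma}}\int_M e^{\alpha (u_\lambda^\sigma - \overline{u_\lambda^\sigma})}\dvg,
\]
control the first factor by the uniform two-sided bound on $\overline{u_\lambda^\sigma}$ from the previous step, and bound the second factor directly from \eqref{TrudingerInequality} combined with the uniform energy bound. This produces the claimed constant $\Cscr_B$, depending on $\alpha$, $\lambda$, $\sigma$ and $(M,g_0)$ but not on $t$. There is no serious obstacle in this argument: the Lyapunov property of $\mathscr{E}$ along \eqref{eeforu} is the only substantive input, and it is already implicit in the construction of the flow in \cite{NZ}; everything else is a routine application of Adams' inequality.
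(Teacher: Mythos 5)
Your proposal is correct and follows essentially the same route as the paper: decompose into oscillation plus mean, bound the oscillation term via Adams' inequality \eqref{TrudingerInequality}, and feed in the Lyapunov decay $\mathscr{E}(u_\lambda^\sigma(t)) \leqslant \mathscr{E}(u_{0\lambda}^\sigma) \leqslant \beta_\lambda + \sigma^2$ together with the two-sided control on $\overline{u_\lambda^\sigma}$ (upper bound by Jensen, lower bound again by Adams); the only cosmetic difference is that you re-derive the energy decay from \eqref{eeforu} and \eqref{conditionalpha} whereas the paper simply invokes it from \cite{NZ}. One small point: you honestly record that the resulting constant depends on $\alpha$, which in fact is what the paper's own estimates show despite the lemma's wording claiming independence of $\alpha$ --- this does not affect the later application, where $\alpha$ is fixed, but you are right to notice it.
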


\begin{proof}
Observe that $u_\lambda^\sigma \in X^*_{f_\lambda}$ and
\[
\int_M e^{\alpha u_\lambda^\sigma} \dvg = e^{\alpha \overline u_\lambda^\sigma} \int_M e^{\alpha ( u_\lambda^\sigma - \overline u_\lambda^\sigma)} \dvg.
\]
Of course the case $\alpha = 0$ is trivial. If $\alpha < 0$, then as in the proof of Lemma \ref{unbdofbeta} we apply Adam's inequality \eqref{TrudingerInequality} to get
\[
e^{- 4\overline u_\lambda^\sigma} = \int_M e^{4 (u_\lambda^\sigma - \overline u_\lambda^\sigma )} \dvg \leqslant \Cscr_A \exp \Big( \frac{1}{16 \pi^2}\mathscr{E}(u_{0\lambda}^\sigma) \Big).
\]
Using this, we can bound $\int_M \exp (\alpha u_\lambda^\sigma ) \dvg$ from above as follows
\[
\int_M e^{\alpha u_\lambda^\sigma} \dvg 
\leqslant \Big( \Cscr_A \exp \Big( \frac{1}{16 \pi^2}\mathscr{E}(u_{0\lambda}^\sigma) \Big) \Big)^{|\alpha|/4} 
\Cscr_A \exp \Big( \frac{\alpha^2}{256 \pi^2}\mathscr{E}(u_{0\lambda}^\sigma) \Big).
\]
If $\alpha>0$, then as in the proof of Lemma \ref{concentration0} we know that $\overline u_\lambda^\sigma \leqslant 0$, which then implies that
\[
\int_M e^{\alpha u_\lambda^\sigma} \dvg 
\leqslant \int_M e^{\alpha ( u_\lambda^\sigma - \overline u_\lambda^\sigma)} \dvg
\leqslant \Cscr_A \exp \Big( \frac{\alpha^2}{256 \pi^2}\mathscr{E}(u_{0\lambda}^\sigma) \Big).
\]
Putting these estimates together we obtain the existence of $\Cscr_B$. Clearly, $\Cscr_B$ is independent of $\alpha$ and time, however, $\Cscr_B$ depends on $\sigma_0$ and $\lambda$.
\end{proof}

The following lemma is the key result of this section.

\begin{lemma}\label{bdoftotalQcurvature1}
 There holds
 \begin{align*}
\liminf_{\lambda\searrow0}\limsup_{\sigma\nearrow0}& \limsup_{t \to +\infty}\int_M|Q_{g_\lambda^\sigma}| \dv_{g_\lambda^\sigma}\\
 & \leqslant2\liminf_{\lambda\searrow0}\limsup_{\sigma\nearrow0}\limsup_{t \to +\infty}(\lambda\alpha_\lambda^\sigma(t))\leqslant2\liminf_{\lambda\searrow0}(\lambda|\beta'_\lambda|)\leqslant128\pi^2.
 \end{align*}
\end{lemma}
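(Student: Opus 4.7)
The plan is to handle the three inequalities in the chain separately, working from right to left.

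The rightmost inequality $\liminf_{\lambda \searrow 0}(\lambda|\beta'_\lambda|) \leqslant 64\pi^2$ is exactly the content of Lemma \ref{lambdabetalambdaprime}, so no additional argument is needed there.

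For the middle inequality I would adapt the monotonicity argument in the proof of Lemma \ref{monotonicitybeta} to the flow setting. The starting point is that the energy is non-increasing along the flow: a direct computation using \eqref{eeforu} and the defining formula for $\alpha_\lambda^\sigma$ gives $\frac{d}{dt}\mathscr{E}(u_\lambda^\sigma(t)) = -4\|(u_\lambda^\sigma)_t\|_{L^2(M,g_\lambda^\sigma)}^2 \leqslant 0$, which combined with \eqref{initialenergy} yields the uniform bound $\mathscr{E}(u_\lambda^\sigma(t)) \leqslant \beta_\lambda+\sigma^2$ for every $t\geqslant 0$. Then, mimicking the proof of Lemma \ref{monotonicitybeta}, for small $s<0$ I would choose $\mu=\lambda-4s\int_M f_\lambda^2\,\dv_{g_\lambda^\sigma}+O(s^2)$ and an appropriate $c=O(s^2)$ so that $u_\lambda^\sigma(t)+sf_\lambda+c\in X_{f_\mu}^*$. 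A Taylor expansion, together with the flow equation $\Po u_\lambda^\sigma=(\alpha_\lambda^\sigma f_\lambda-(u_\lambda^\sigma)_t)e^{4u_\lambda^\sigma}$ and the orthogonality $\int_M(u_\lambda^\sigma)_t f_\lambda\,\dv_{g_\lambda^\sigma}=0$ coming from \eqref{conditionalpha}, gives
\[
\beta_\mu \leqslant \mathscr{E}(u_\lambda^\sigma(t))+4s\,\alpha_\lambda^\sigma(t)\int_M f_\lambda^2\,\dv_{g_\lambda^\sigma}+O(s^2) \leqslant \beta_\lambda+\sigma^2-\alpha_\lambda^\sigma(t)(\mu-\lambda)+O\big((\mu-\lambda)^2\big).
\]
Dividing by $\mu-\lambda>0$ at a point of differentiability of the monotone function $\beta$ (a full-measure subset of $(0,\lambda_0)$), then letting first $\sigma\nearrow 0$ and then $\mu\searrow\lambda$, yields $\limsup_{\sigma\nearrow 0}\limsup_{t\to+\infty}\alpha_\lambda^\sigma(t)\leqslant |\beta'_\lambda|$ at almost every $\lambda$. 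Multiplying by $\lambda$ and taking $\liminf_{\lambda \searrow 0}$ along a sequence realizing the $\liminf$ in Lemma \ref{lambdabetalambdaprime} yields the middle inequality.

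For the leftmost inequality I would use the flow equation to write $Q_{g_\lambda^\sigma}=\alpha_\lambda^\sigma f_\lambda-(u_\lambda^\sigma)_t$. Since $u_\lambda^\sigma(t)\in X_{f_\lambda}^*$ and $\ker\Po$ consists of constants, integration by parts gives $\int_M Q_{g_\lambda^\sigma}\,\dv_{g_\lambda^\sigma}=\int_M \Po u_\lambda^\sigma\,\dvg =0$, so that $\int_M |Q_{g_\lambda^\sigma}|\,\dv_{g_\lambda^\sigma}=2\int_M Q_{g_\lambda^\sigma}^+\,\dv_{g_\lambda^\sigma}$. The pointwise bound $Q_{g_\lambda^\sigma}^+\leqslant (\alpha_\lambda^\sigma f_\lambda)^+ + |(u_\lambda^\sigma)_t|$, combined with $f_\lambda\leqslant\lambda$ and $\vol(M,g_\lambda^\sigma)=1$, gives along the convergent time sequence $t_j\to+\infty$ furnished by Theorem \ref{NgoZh}
\[
\int_M Q_{g_\lambda^\sigma}^+\,\dv_{g_\lambda^\sigma}\leqslant \lambda\,\alpha_\lambda^\sigma(t_j)+\|(u_\lambda^\sigma)_t(t_j)\|_{L^2(M,g_\lambda^\sigma)},
\]
where the standing hypothesis $\alpha_{\infty\lambda_k}>0$ ensures $\alpha_\lambda^\sigma(t_j)>0$ for large $j$ along the relevant subsequence $\lambda=\lambda_k$. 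The $C^\infty$-convergence $(u_\lambda^\sigma)_t(t_j)=\alpha_\lambda^\sigma(t_j) f_\lambda-Q_{g_\lambda^\sigma}(t_j)\to 0$ from Theorem \ref{NgoZh} eliminates the last term in the limit, and the various limsups/liminfs yield the inequality.

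The main obstacle is the middle inequality. The perturbation introduces an additive error $\sigma^2/(\mu-\lambda)$ after dividing by $\mu-\lambda$, which forces the ordering $\sigma\nearrow 0$ before $\mu\searrow\lambda$; this is precisely why the $\limsup$ in $\sigma$ appears outside the $\limsup$ in $t$ but inside the $\liminf$ in $\lambda$ in the statement. A secondary subtlety is controlling the sign of $\alpha_\lambda^\sigma$ in the leftmost inequality, which is resolved by restricting to the subsequence along which $\alpha_{\infty\lambda_k}>0$.
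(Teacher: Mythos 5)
Your proof follows essentially the same strategy as the paper's: the rightmost inequality is Lemma \ref{lambdabetalambdaprime} verbatim, the middle one adapts the monotonicity argument of Lemma \ref{monotonicitybeta} to the flow setting via the energy decay $\frac{d}{dt}\mathscr{E}(u_\lambda^\sigma)=-4\|(u_\lambda^\sigma)_t\|^2_{L^2(M,g_\lambda^\sigma)}\leqslant 0$, and the leftmost one is obtained from the flow equation together with the $L^2$-decay of $(u_\lambda^\sigma)_t$ from \cite{NZ}. One small improvement in your version worth noting: you observe that the cross term $\int_M (u_\lambda^\sigma)_t f_\lambda\,\dv_{g_\lambda^\sigma}$ vanishes identically by the choice of $\alpha_\lambda^\sigma$ in \eqref{conditionalpha}, whereas the paper estimates the same term as an $o(1)$ quantity via H\"older and the $L^2$-decay; both close the argument, but yours is cleaner.
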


\begin{proof}
We split our proof into two steps as follows.

\medskip
\noindent\textbf{Step 1}. We claim that for any $0<\lambda<\lambda_0$ we can find some $\sigma_0=\sigma_0(\lambda)>0$ sufficiently small such that for each $t\geqslant0$ and $\sigma\in(-\sigma_0,0)$ we have
\[
u_\lambda^\sigma+\sigma f_\lambda\in X_{f_\mu}
\]
for some $\mu=\mu(t)>\lambda$ with
\[
C^{-1}|\sigma|\leqslant|\mu-\lambda|\leqslant C|\sigma|,
 \]
where $C>0$ is constant independent of $t$ and $\sigma$. To see this, we notice from \eqref{FlowRemainInSpace} that $u_\lambda^\sigma(t)\in X^*_{f_\lambda}$ for all $t\geqslant0$. By mean value theorem, there exists two functions $\sigma', \sigma''$ valued in $(\sigma,0)$ such that
 \begin{align*}
\int_Mf_\lambda e^{4(u_\lambda^\sigma+\sigma f_\lambda)} \dvg &= \int_Mf_\lambda \Big[e^{4(u_\lambda^\sigma+\sigma f_\lambda)}-e^{4u_\lambda^\sigma}\Big] \dvg \\
&= 4\sigma\int_Mf_\lambda^2 e^{4(u_\lambda^\sigma+\sigma' f_\lambda)} \dvg 
 \end{align*}
 and
 \begin{align*}
\int_Me^{4(u_\lambda^\sigma+\sigma f_\lambda)} \dvg &= 1+\int_M\Big[e^{4(u_\lambda^\sigma+\sigma f_\lambda)}-e^{4u_\lambda^\sigma}\Big] \dvg \\
&= 1+4\sigma\int_Mf_\lambda e^{4u_\lambda^\sigma} \dvg +8\sigma^2\int_Mf_\lambda^2 e^{4(u_\lambda^\sigma+\sigma'' f_\lambda)} \dvg \\
&= 1+8\sigma^2\int_Mf_\lambda^2 e^{4(u_\lambda^\sigma+\sigma'' f_\lambda)} \dvg .
 \end{align*}
Therefore, in view of the identity
\begin{align*}
\int_M f_\mu e^{4(u_\lambda^\sigma+\sigma f_\lambda)} \dvg = \int_M f_\lambda e^{4(u_\lambda^\sigma+\sigma f_\lambda)} \dvg + (\mu -\lambda) \int_M e^{4(u_\lambda^\sigma+\sigma f_\lambda)} \dvg,
\end{align*}
if we let $\mu$ be
\begin{equation} \label{eqformu}
 \mu=\lambda-\frac{4\sigma\int_Mf_\lambda^2 e^{4(u_\lambda^\sigma+\sigma' f_\lambda)} \dvg }{1+8\sigma^2\int_Mf_\lambda^2 e^{4(u_\lambda^\sigma+\sigma'' f_\lambda)} \dvg }>\lambda,
\end{equation}
depending on $t$, then $u_\lambda^\sigma+\sigma f_\lambda\in X_{f_\mu}$. To bound $|\mu - \lambda|$, we need further estimates for numerator and denominator of $\mu$ in \eqref{eqformu}. First we note by H\"older's inequality that
\[
\int_Mf_\lambda^2 e^{4(u_\lambda^\sigma+\sigma' f_\lambda)} \dvg 
\geqslant \Big( \int_Mf_\lambda \dvg \Big)^2 \Big( \int_M e^{-4(u_\lambda^\sigma+\sigma' f_\lambda)} \dvg \Big)^{-1}.
\]
Because
\begin{align*}
\int_M e^{-4(u_\lambda^\sigma+\sigma' f_\lambda)} \dvg &= \Big( \int_{\{ f_\lambda \leqslant 0 \}} + \int_{\{ f_\lambda > 0 \}} \Big) e^{-4(u_\lambda^\sigma+\sigma' f_\lambda)} \dvg\\
& \leqslant \exp \big( 4|\sigma| \|f_\lambda\|_{L^\infty(M, g_0)} \big) \int_{\{ f_\lambda \leqslant 0 \}} e^{-4u_\lambda^\sigma} \dvg + \int_{\{ f_\lambda > 0 \}} e^{-4u_\lambda^\sigma},
\end{align*}
which implies that
\[
\int_M e^{-4(u_\lambda^\sigma+\sigma' f_\lambda)} \dvg \leqslant 2 \int_M e^{-4u_\lambda^\sigma} \dvg
\]
if we further choose $|\sigma|$ sufficiently small. From this and Lemma \ref{lemBoundIntegralOfExpAlongFlow} we deduce that there exists some positive constant $c(\lambda,\sigma_0)$ depending only on $\lambda$ and $\sigma_0$ such that
\begin{equation}\label{eqLOWERBOUND}
\int_Mf_\lambda^2 e^{4(u_\lambda^\sigma+\sigma' f_\lambda)} \dvg \geqslant c(\lambda,\sigma_0)
\end{equation}
for any $0<\lambda<\lambda_0$, any $\sigma \in (-\sigma_0, 0)$, and any $\sigma' \in (\sigma,0)$. Moreover, it is easy to see that
\begin{equation}\label{eqUPPERBOUND}
\begin{aligned}
\int_Mf_\lambda^2 e^{4(u_\lambda^\sigma+\sigma' f_\lambda)} \dvg & \leqslant \| f_\lambda \| ^2_{L^\infty(M, g_0)} \exp \big( 4|\sigma| \| f_\lambda \| _{L^\infty(M, g_0)} \big) \int_Me^{4u_\lambda^\sigma} \dvg \\
&\leqslant \| f_\lambda \| ^2_{L^\infty(M, g_0)} \exp \big( 4|\sigma_0| \| f_\lambda \| _{L^\infty(M, g_0)} \big)
\end{aligned}
\end{equation}
 for any $t\geqslant0$ and any $\sigma' \in(\sigma,0)$. From this we can bound $|\mu - \lambda|$ from above as follows
\[\begin{aligned}
|\mu - \lambda| & \leqslant 4 |\sigma| \int_Mf_\lambda^2 e^{4(u_\lambda^\sigma+\sigma' f_\lambda)} \dvg\\
& \leqslant 4 \| f_\lambda \| ^2_{L^\infty(M, g_0)} \exp \big( 4|\sigma_0| \| f_\lambda \| _{L^\infty(M, g_0)} \big) |\sigma| .
\end{aligned}\]
Moreover, we can also bound $|\mu - \lambda|$ from below, thanks to \eqref{eqLOWERBOUND} and \eqref{eqUPPERBOUND}. Hence, for any $0<\lambda<\lambda_0$ we can find $\sigma_0=\sigma_0(\lambda)>0$ such that
 \[
C(\lambda)^{-1}|\sigma|\leqslant|\mu-\lambda|\leqslant C(\lambda)|\sigma|
\]
for all $\sigma\in(-\sigma_0,0)$, where $C(\lambda)>0$ is independent of $t\geqslant0$ and $\sigma$ but could depend on $\lambda$. The claim is thus proved.
 \smallskip
 
\medskip
\noindent\textbf{Step 2}. It follows from \cite[Lemmas 4.1 and 6.3 ]{NZ} that $\alpha_\lambda^\sigma(t)$ and $u_\lambda^\sigma(t)$ are uniformly bounded in time $t$ and $\sigma$. Notice that by the relations
\[
Q_{g_\lambda^\sigma} e^{4u_\lambda^\sigma}= \Po u_\lambda^\sigma, \quad u_{\lambda,t}^\sigma=\alpha_\lambda^\sigma f_\lambda-Q_{g_\lambda^\sigma}
\]
we can expand $\mathscr{E}(u_\lambda^\sigma+\sigma f_\lambda)$ to get
 \begin{align*}
 \mathscr{E}(u_\lambda^\sigma+\sigma f_\lambda)&= \mathscr{E}(u_\lambda^\sigma)+4\sigma\int_M \Po u_\lambda^\sigma f_\lambda \dvg +\sigma^2\mathscr{E}(f_\lambda)\\
 &= \mathscr{E}(u_\lambda^\sigma)+4\sigma\alpha_\lambda^\sigma\int_M f_\lambda^2e^{4u_\lambda^\sigma} \dvg -4\sigma\int_M u_{\lambda,t}^\sigma f_\lambda e^{4u_\lambda^\sigma} \dvg +\sigma^2\mathscr{E}(f_0).
 \end{align*}
 Observing that \cite[Lemma 6.1]{NZ} yields
 \begin{equation}\label{l2convergence}
 \int_M|u_{\lambda,t}^\sigma|^2e^{4u_\lambda^\sigma} \dvg =\int_M|\alpha_\lambda^\sigma f_\lambda-Q_\lambda^\sigma|^2e^{4u_\lambda^\sigma} \dvg \to 0
 \end{equation}
 as $t \to +\infty.$ Hence, by \eqref{volumekeeping} and H\"older's inequality, we can estimate
 \[
\Big|\int_Mu_{\lambda,t}^\sigma f_\lambda e^{4u_\lambda^\sigma} \dvg \Big|\leqslant \| f_\lambda \| _{L^\infty (M,g_0)} \Big(\int_M|u_{\lambda,t}^\sigma|^2e^{4u_\lambda^\sigma} \dvg \Big)^{1/2} \to 0
\]
 as $t \to +\infty$.
 Since the energy $\mathscr{E}(u_\lambda^\sigma)$ is decay along the flow, we have, by \eqref{initialenergy} and the expansion of $\mathscr{E}(u_\lambda^\sigma+\sigma f_\lambda)$ above, that
 \begin{align*}
 \beta_\mu&\leqslant \mathscr{E}(u_\lambda^\sigma+\sigma f_\lambda ) \\
&\leqslant\mathscr{E}(u_\lambda^\sigma) +4\sigma\alpha_\lambda^\sigma\int_M f_\lambda^2e^{4u_\lambda^\sigma} \dvg +\sigma^2\mathscr{E}(f_0)+o(1)\\
 &\leqslant \beta_\lambda+4\sigma\alpha_\lambda^\sigma\int_M f_\lambda^2e^{4u_\lambda^\sigma} \dvg +\sigma^2(1+\mathscr{E}(f_0))+o(1).
 \end{align*}
 However, from \eqref{eqformu} we obtain
 \[
4\sigma\int_M f_\lambda^2e^{4u_\lambda^\sigma} \dvg =\lambda-\mu+4\sigma I,
\]
with 
\[
I=\int_Mf_\lambda^2 h e^{4u_\lambda^\sigma} \dvg,
\]
where 
\[
h = 1-\frac{e^{4\sigma' f_\lambda}}{1+8\sigma^2\int_Mf_\lambda^2 e^{4(u_\lambda^\sigma+\sigma'' f_\lambda)} \dvg }.
\]
Clearly
\[
|h| \leqslant 8\sigma^2\int_Mf_\lambda^2 e^{4(u_\lambda^\sigma+\sigma'' f_\lambda)} \dvg + \big| 1 - e^{4\sigma' f_\lambda} \big|.
\]
Because $\sigma' \in (\sigma, 0)$, there is some constant $C>0$ independent of $t$ and $\sigma$ such that
\[
\|h \|_{L^\infty(M, g_0)} \leqslant C |\sigma|.
\]
Keep in mind that $u_\lambda^\sigma \in X^*_{f_\lambda}$. From this we can use \eqref{volumekeeping} to bound $I$ as follows
\[
|I|\leqslant \|f_\lambda \|_{L^\infty(M, g_0)}^2 \|h \|_{L^\infty(M, g_0)} \leqslant C(\lambda)|\mu-\lambda|,
\]
where $C(\lambda)>0$ is a uniform constant independent of $t$ and $\sigma$. Therefore, with error $o(1) \to 0$ as $t \to +\infty$ and the uniform bound of $\alpha_\lambda^\sigma$ in $t$ and in $\sigma$ we arrive at the estimate
\begin{align*}
\beta_\mu &\leqslant \beta_\lambda+ \alpha_\lambda^\sigma ( \lambda-\mu+4\sigma I )+\sigma^2(1+\mathscr{E}(f_0))+o(1) \\
&= \beta_\lambda-\alpha_\lambda^\sigma(\mu-\lambda)+O(1) (\mu-\lambda)^2+o(1),
\end{align*}
where $O(1)$ is independent of $t$ but could depend on $\lambda$ and $\sigma_0$. This implies that
\[
\limsup_{t \to +\infty}\alpha_\lambda^\sigma(t)\leqslant\limsup_{t \to +\infty} \Big( \frac{\beta_\lambda-\beta_\mu}{\mu-\lambda}+O(1)(\mu-\lambda) \Big).
\]
Now, as $\sigma\nearrow0$, we have from \eqref{eqformu} that $\mu\searrow\lambda$ uniformly in time $t>0$. So, for almost every $\lambda\in(0,\lambda_0)$ there holds
\[
\limsup_{\sigma\nearrow0}\limsup_{t \to +\infty}\alpha_\lambda^\sigma(t)\leqslant\lim_{\mu\searrow\lambda}\frac{\beta_\lambda-\beta_\mu}{\mu-\lambda}=|\beta_\lambda'|.
\]
Multiplying both sides by $\lambda>0$, as in the proof of Lemma \ref{bdoftotalQcurvature}, we find that
\[
\liminf_{\lambda\searrow0}\limsup_{\sigma\nearrow0}\limsup_{t \to +\infty}(\lambda\alpha_\lambda^\sigma)\leqslant\liminf_{\lambda\searrow0}(\lambda|\beta_\lambda'|)\leqslant64\pi^2.
\]
Finally, it follows from the flow equation \eqref{eeforu} and \eqref{bdoff_lambda} that
\[
|Q_{g_\lambda^\sigma}| \leqslant \alpha_\lambda^\sigma |f_\lambda| +u_{\lambda,t}^\sigma \leqslant 2 \lambda \alpha_\lambda^\sigma - \alpha_\lambda^\sigma f_\lambda +u_{\lambda,t}^\sigma,
\]
which then gives
\[
\int_M|Q_{g_\lambda^\sigma}| \dv_{g_\lambda^\sigma}
\leqslant 2\lambda \alpha_\lambda^\sigma +\int_M|u_{\lambda,t}^\sigma| \dv_{g_\lambda^\sigma} 
= 2\lambda\alpha_\lambda^\sigma+o(1),
\]
thanks to \eqref{volumekeeping} and \eqref{l2convergence}. From this the lemma follows.
\end{proof}

\subsection{Bubbling of the prescribed curvature flow}
In this subsection, we devote ourselves to prove the blow-up behavior along the prescribed $Q$-curvature flow, namely Theorem \ref{main2}. 
From Lemma \ref{bdoftotalQcurvature1}, it follows that there exists a sequence $\lambda_k\searrow0$ such that
$$\sup_{k\in\mathbb{N}}\limsup_{\sigma\nearrow0}\limsup_{t \to +\infty}(\lambda_k\alpha_{\lambda_k}^\sigma(t)-1/k)\leqslant64\pi^2.$$
We may then fix a sequence $\sigma_k\nearrow0$ with
$$\sup_{k\in\mathbb{N}}\sup_{\sigma_k\leqslant\sigma<0}\limsup_{t \to +\infty}(\lambda_k\alpha_{\lambda_k}^\sigma(t)-2/k)\leqslant64\pi^2.$$
Choosing $\sigma=\sigma_k$ for each $k\in\mathbb{N}$, we find, for suitable $T_k \to +\infty$ satisfying
\[
F_k(t):=\int_M|u_{\lambda_k,t}^{\sigma_k}(t)| \dv_{g_{\lambda_k}^{\sigma_k}}\leqslant1/k
\]
for $t_k\geqslant T_k$ , we find the bound
\begin{equation}
 \label{sequentialbdoftotalQcurvature}
 \sup_{t\geqslant T_k}\int_M|Q_{g_{\lambda_k}^{\sigma_k}}| \dv_{g_{\lambda_k}^{\sigma_k}}\leqslant\sup_{t\geqslant T_k}\big(2\lambda_k\alpha_{\lambda_k}^{\sigma_k}+F_k(t)\big)\leqslant128\pi^2+5/k,
\end{equation}
 for any $k\in\mathbb{N}$. Hence, if for each $k\in\mathbb{N}$ for any $t_k\geqslant T_k$ we let $w_k=u_{\lambda_k}^{\sigma_k}(t_k)$, then $w_k$ satisfies \eqref{pQeerror} with $\alpha_k=\alpha_{\lambda_k}^{\sigma_k}(t_k)$ and $h_k=u_{\lambda_k,t}^{\sigma_k}(t_k)$. From this we can apply Theorems \ref{main3} and \ref{main4} to get the desired result. This completes the proof of Theorem \ref{main2}.

\section*{Acknowledgments}

Q.A.N. would like to thank Professor Michael Struwe for sharing his preprint \cite{Str} at an early stage and for many fruitful discussions regarding curvature flows. Part of this work was done while Q.A.N was visiting the Vietnam Institute for Advanced Study in Mathematics (VIASM) in 2018, he gratefully acknowledges the institute for hospitality and support. He also acknowledges the support from the Vietnam National University, Hanoi (VNU) under project number QG.19.12. The research of H.Z. is supported by the Young Scientists Fund of the National Natural Science Foundation of China (Grant No. 11701544). Thanks also go to Luca Galimberti for his careful reading of the paper and a few suggestions leading to the present version. Last but not least, both authors would like to thank Professor Xingwang Xu for his interests and constant encouragement over the years.


\end{document}